\definecolor{asypurple}{HTML}{8000FF}
\newcommand{\maxtropline}[3]{%
    \filldraw (#1, #2) circle (0.03);
    \draw (0, #2) -| (#1, 0);
    \draw (#1, #2) -- ($(0,#3)!(#1, #2)!(#3,0)$); %
}
\newcommand{\maxtroplinecolor}[4]{%
    \filldraw (#1, #2) [#4] circle (0.03);
    \draw (0, #2) [#4] -| (#1, 0);
    \draw (#1, #2) [#4] -- ($(0,#3)!(#1, #2)!(#3,0)$); %
}
\theoremstyle{definition}
\declaretheorem[within=section]{definition}
\declaretheorem[within=section,sibling=definition,qed=$\diamondsuit$]{example}
\declaretheorem[name={Example}, numbered=no,within=section,sibling=definition,qed=$\diamondsuit$]{example*}
\theoremstyle{plain}
\declaretheorem[within=section,sibling=definition]{corollary}
\declaretheorem[within=section,sibling=definition]{lemma}
\declaretheorem[within=section,sibling=definition]{proposition}
\declaretheorem[within=section,sibling=definition]{theorem}
\declaretheorem[name={Theorem}]{maintheorem}
\declaretheorem[name={Corollary},sibling=maintheorem]{maincorollary}
\theoremstyle{remark}
\declaretheorem[within=section,sibling=definition]{remark}
\newcommand{\C}{\mathbf{C}}
\newcommand{\K}{\mathbf{K}}
\newcommand{\N}{\mathbf{N}}
\renewcommand{\P}{\mathbf{P}}
\newcommand{\Q}{\mathbf{Q}}
\newcommand{\T}{\mathbf{T}}
\newcommand{\TR}{\mathbf{T}\!\!\mathbf{R}}
\newcommand{\R}{\mathbf{R}}
\renewcommand{\S}{\mathbf{S}}
\newcommand{\Z}{\mathbf{Z}}
\newcommand{\PF}{{\mathrm{PF}}}
\newcommand{\initial}{\mathrm{in}}
\newcommand{\result}[1]{R_{ #1}}
\newcommand{\cF}{\mathcal{F}}
\newcommand{\cL}{\mathcal{L}}
\DeclareMathOperator{\mult}{mult}
\DeclareMathOperator{\bmult}{\partial-mult}
\DeclareMathOperator{\gmult}{gmult}
\DeclareMathOperator{\pmult}{\epsilon-mult}
\newcommand{\sN}{{\epsilon\text{-}N}}
\DeclareMathOperator{\val}{val}
\newcommand{\sign}{{\mathrm{sgn}}}
\newcommand{\sgn}{{\mathrm{sgn}}}
\newcommand{\supp}{{\mathrm{supp}}}
\newcommand{\angular}{{\mathrm{ac}}}
\newcommand{\vangular}{{\nu_\mathrm{ac}}}
\newcommand{\vsign}{{\nu_\sign}}
\newcommand{\Newt}{{\mathrm{Newt}}}
\newcommand{\mc}[1]{\mathcal{#1}}
\DeclareMathOperator*{\bigboxplus}{\raisebox{-0.5em}{\scaleobj{2}{\boxplus}}}
\author{Andreas Gross}
\author{Trevor Gunn}
\title[The multivariate Descartes' problem]{Factoring multivariate polynomials over hyperfields and the multivariable Descartes' problem}
\date{\today}
\begin{document}
\begin{abstract}
    We develop several notions of multiplicity for linear factors of multivariable polynomials over different arithmetics (hyperfields).
    The key example is multiplicities over the hyperfield of signs, which encapsulates the arithmetic of $\mathbf{R}/\mathbf{R}_{>0}$.
    These multiplicities give us various upper and lower bounds on the number of linear factors with a given sign pattern in terms of the signs of the coefficients of the factored polynomial. Using resultants, we can transform a square system of polynomials into a single polynomial whose multiplicities give us bounds on the number of positive solutions to the system.
    In particular, we are able to re-derive the lower bound of Itenberg and Roy on any potential upper bound for the number of solutions to a system of equations with a given sign pattern. In addition, our techniques also explain a particular counterexample of Li and Wang to Itenberg and Roy's proposed upper bound. 
\end{abstract}
\maketitle

\section*{Introduction}

\subsection*{Background}

Famously, Descartes' Rule of Signs states that the number of positive solutions of a polynomial
\[
    f(x) = a_0 + a_1 x + \dots + a_n x^n \in \R[x]
\]
is bounded above by the number of sign changes of the sequence of coefficients $a_0,\ldots, a_n$. Numerous proofs have been found since Descartes' original work \cite{Proof1Descartes,Proof2Descartes}, some of which are extremely short \cite{ShortProof1,ShortProof2}. There are several generalizations of Descartes' Rule of Signs as well: the Budan–Fourier theorem and Sturm's theorem give estimates of the number of solutions of real polynomials in a given interval in terms of the number of sign changes of suitable sequences of real numbers. Laguerre proved, using Rolle's theorem, that Descartes' rule also holds if the exponents appearing in $f$ are arbitrary real numbers, and the problem of finding and characterizing more general functions satisfying Descartes' rule has received some attention \cite{Generaliztion1Descartes, Generalization2Descartes, CurtissPowerSeries}.
Descartes' bound (in the polynomial setting) is also known to be sharp \cite{SharpnessDescartes}.

In multiple variables, one possible generalization of Descartes' rule considers a single polynomial $f(\bm x)$ in several variables and  asks on how many components of the complement of its vanishing set the polynomial $f(\bm x)$ can be positive, given the signs of its coefficients \cite{DescartesHypersurfaces}. Another generalization considers systems of real polynomial equations $0=f_1(\bm x)=f_2(\bm x)= \cdots$ and asks how many solutions with only positive entries such a system can have, given the signs of the coefficients of each of the $f_i$. This latter formulation was first studied by Itenberg and Roy \cite{IR}, who made a conjecture for a sharp upper bound of positive solutions in terms of Newton polytopes and mixed subdivisions. Popularized by a \$500 bounty by Bernd Sturmfels, the conjecture received some attention and was later disproven \cite{LW}. More recently, Bihan-Dickenstein and Bihan-Dickenstein-Forsgård gave a sharp upper bound for the number of positive solutions of systems of polynomials supported on a circuit \cite{BD, BDF}. The general case is still wide open.

\begin{example*} \label{ex:resultant-might-change}
    With multiple variables, it is possible to have a family of equations with consistent signs but whose solutions have varying signs. This phenomenon does not happen in one variable where, if the coefficients change $k$ times, Descartes' rule tells us that there will always be exactly $k$ positive roots assuming all the roots are real. For example, consider the system
    \begin{align*}
        x^2 + y^2 &= 1, \\
        ax + by &= 1, \\
        a, b &> 0.
    \end{align*}
    The space of real solution sets consists of four open components as shown in Figure~\ref{fig:sign-patterns}.
\end{example*}

\begin{figure}[htbp]
    \centering
    \begin{tikzpicture}
        \draw[dashed] (-1.1, 0) -- (1.1, 0);
        \draw[dashed] (0, -1.1) -- (0, 1.1);
        \draw (0, 0) circle (1);
        \draw (-1.1, 0.8) -- (1.1, 0.2);
    \end{tikzpicture}
    \quad
    \begin{tikzpicture}
        \draw[dashed] (-1.1, 0) -- (1.1, 0);
        \draw[dashed] (0, -1.1) -- (0, 1.1);
        \draw (0, 0) circle (1);
        \draw (-1.1, 0.8) -- (1.1, -0.4);
    \end{tikzpicture}
    \quad
    \begin{tikzpicture}
        \draw[dashed] (-1.1, 0) -- (1.1, 0);
        \draw[dashed] (0, -1.1) -- (0, 1.1);
        \draw (0, 0) circle (1);
        \draw (0.1, 1.1) -- (1.1, 0.1);
    \end{tikzpicture}
    \quad
    \begin{tikzpicture}
        \draw[dashed] (-1.1, 0) -- (1.1, 0);
        \draw[dashed] (0, -1.1) -- (0, 1.1);
        \draw (0, 0) circle (1);
        \draw (0.8, -1.1) -- (0.2, 1.1);
    \end{tikzpicture}
    \caption{Possible sign patterns which arise from intersecting a line with the unit circle.}
    \label{fig:sign-patterns}
\end{figure}
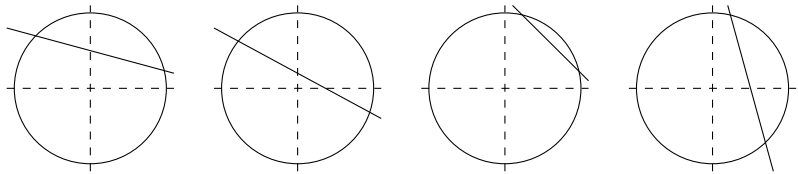

\subsection*{Descartes' rule and hyperfields}
Hyperfields are generalizations of fields, where addition may be multivalued. These appear naturally when looking at the quotient of a field by a multiplicative group. For instance, we can take the real numbers and quotient by the group of absolute values ($\R_{>0}$) to obtain the hyperfield of signs $\S = \{+1, -1, 0\}$. The arithmetic of signs has rules such as $1+1=1$ (the sum of two positive numbers is always positive) and $1+(-1)=\S$ (the sum of a positive and negative number may have any sign). Similarly, if we quotient $\R$ by $\{\pm 1\}$, we get a hyperfield which encapsulates the arithmetic of absolute values. Arithmetic of non-Archimedean absolute values is often used in tropical geometry. We call this hyperfield the \emph{tropical hyperfield}, $\T$. This hyperfield is an enrichment of the tropical semifield. We can also combine signs and non-Archimedean absolute values with the so-called real tropical hyperfield $\TR$, which is a sort of semidirect product of $\S$ and $\T$. This hyperfield is useful to describe real tropical geometry \cite{JSY}.

In their recent paper \cite{BL}, Baker and Lorscheid have given a proof of Descartes' Rule of Signs using hyperfields. What they show is that given a real polynomial $f(x)\in \R[x]$ with $n$ positive roots, its image $f^\sgn$ in $\S[x]$ must be divisible by $x-1 \in \S[x]$ at least $n$ times. The multiplicity $\mult^\S_{x-1}(f^\sgn)$ of $x - 1$ as a factor of $f^\sgn$ therefore bounds the number of positive roots of $f$ from above. 
Moreover, Baker and Lorscheid show that the maximal number of times one can factor out $x - 1$ (i.e.\ $\mult_{x - 1}^\S (f^\sgn)$)
is exactly the number of sign alterations as in  Descartes' rule. Their theory also applies to the tropical hyperfield \cite{BL} as well as other hyperfields like those associated to higher rank valuations or combining valuations and signs \cite{G, G2}. Akian-Gaubert-Tavikalipour have also carried out similar factorization results for polynomials over Rowan's ``semiring systems'' \cite{AGT}.

\subsection*{Linear factors of multivariate polynomials}

An analogous formulation of Descartes' rule that has, so far, received little attention asks the following: given a polynomial $f(\bm x)$ in \emph{several} variables with given support and coefficients with prescribed signs, what is the sharp upper bound for the number of its linear factors with a prescribed sign pattern? There is some relationship between this problem and the system-of-equation problem because the sparse resultant of a system of equations yields a single polynomial whose linear factors correspond (with multiplicity!) to the common solutions of the system. However, as shown in the example above, the signs of the resultant are not uniquely determined from the signs of the system.

We approach the linear factor problem with the same strategy used by Baker and Lorscheid \cite{BL} in the univariate case: for a real multivariate polynomial $f(\bm x)\in \R[\bm x]$ and a ``signed'' degree-$1$ polynomial $l=s_0 +\sum s_i x_i\in \S[\bm x]$, we define $\mult^\R_{\sgn^{-1}\{l\}}(f)$ as the maximal number of degree-$1$ polynomials $k$ with $k^\sgn=l$ that we can factor out of $f$. Similarly, we define $\mult^\S_l(f^\sgn)$ as the maximal number of times that we can factor $l$ out of $f^\sgn$ (as pointed out by Baker and Lorscheid \cite{BL}, one has to be careful here since quotients are not unique; see Definition~\ref{def:hyperfield multiplicity}).

\begin{maintheorem} [= Lemma~\ref{lem:multiplicities and morphisms}]
\label{intro:bounding number of factors}
    We have 
    \[
    \mult^\R_{\sgn^{-1}\{l\}}(f) = \sum_{k} \mult^\R_k(f) \leq \mult^\S_l(f^\sgn),
    \]
    where we sum over a set of representatives $k$ of the image of $\sign^{-1}\{l\}$ in $\R[\bm x]/\R^*$, using unique factorization in $\R[\bm x]$.
\end{maintheorem}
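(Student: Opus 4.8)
The plan is to separate the two assertions. The identity $\mult^\R_{\sgn^{-1}\{l\}}(f)=\sum_k\mult^\R_k(f)$ is a bookkeeping fact in the unique factorization domain $\R[\bm x]$; the inequality $\sum_k\mult^\R_k(f)\le \mult^\S_l(f^\sgn)$ is an instance of a general monotonicity principle: a homomorphism of hyperfields never lowers the multiplicity of a prescribed linear factor. So I would first establish that principle for an arbitrary morphism $\varphi\colon H_1\to H_2$ (this is essentially Lemma~\ref{lem:multiplicities and morphisms}) and then specialize to $\varphi=\sgn\colon\R\to\S$, using the identity to turn ``multiplicity at the set $\sgn^{-1}\{l\}$'' into something the morphism can act on factor by factor.

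\emph{The identity.} In $\R[\bm x]$ every genuine degree-$1$ polynomial is irreducible, and two such polynomials are associates exactly when they differ by a factor in $\R^*$; hence $\mult^\R_k(f)$ depends only on the class $[k]\in\R[\bm x]/\R^*$, the classes meeting $\sgn^{-1}\{l\}$ are pairwise coprime, and only finitely many of them divide $f$. Extracting, one at a time, degree-$1$ polynomials with sign pattern $l$ from $f$ and invoking unique factorization, I would show the process can be run for exactly $\sum_k\mult^\R_k(f)$ steps: no fewer, since we may pull out each representative $k$ with its full multiplicity $\mult^\R_k(f)$ simultaneously by coprimality, and no more, since the product of all such factors already exhausts those prime powers in the factorization of $f$. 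This gives the first equality.

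\emph{The inequality.} The key point is that $\sgn\colon\R\to\S$ is a homomorphism of hyperfields: it is multiplicative with $\sgn(0)=0$, $\sgn(1)=1$, and $\sgn(a_1+\dots+a_r)\in\sgn(a_1)\boxplus\dots\boxplus\sgn(a_r)$ for all reals $a_i$ (the Rule of Signs, read inside $\S$). Computing coefficient by coefficient via $\sgn\big(\sum_{\beta+\gamma=\alpha}k_\beta h_\gamma\big)\in\bigboxplus_{\beta+\gamma=\alpha}\sgn(k_\beta)\,\sgn(h_\gamma)$, this yields, for any $k,h\in\R[\bm x]$, that $(kh)^\sgn$ lies in the polynomial hyperproduct of $k^\sgn$ and $h^\sgn$; equivalently, $k^\sgn$ divides $(kh)^\sgn$ in $\S[\bm x]$ with $h^\sgn$ an admissible quotient. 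Now write $f=k_1\cdots k_m\,g$ with each $k_i\in\sgn^{-1}\{l\}$ and $m=\mult^\R_{\sgn^{-1}\{l\}}(f)$, and argue by induction on $m$: with $f'=k_2\cdots k_m\,g$ one has $f=k_1f'$, so $l=k_1^\sgn$ divides $f^\sgn$ with quotient $(f')^\sgn$; by induction $\mult^\S_l((f')^\sgn)\ge m-1$, and the recursive definition of hyperfield multiplicity (Definition~\ref{def:hyperfield multiplicity}) then gives $\mult^\S_l(f^\sgn)\ge 1+(m-1)=m$. Combined with the identity, this is the claim. Peeling off one factor at a time is deliberate: it avoids any appeal to associativity of the polynomial hyperproduct.

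I expect the genuine obstacle to be conceptual rather than computational: over a hyperfield, polynomial multiplication and division are multivalued, so the recursion on which $\mult^\S_l$ rests — and the fact that it returns a well-defined number — is the delicate ingredient, handled by Definition~\ref{def:hyperfield multiplicity} and the apparatus around it. Granting that, the remaining points are routine: the UFD bookkeeping above, and the one coefficient computation showing $\sgn$ respects polynomial hyper-multiplication. A last thing worth checking is that the morphism step is compatible with the \emph{set}-indexed multiplicity $\mult^\R_{\sgn^{-1}\{l\}}$, i.e.\ that a factorization witnessing $\mult^\R_{\sgn^{-1}\{l\}}(f)\ge m$ maps to one witnessing $\mult^\S_l(f^\sgn)\ge m$; this is exactly what the induction records, since every $k_i$ is sent to the single polynomial $l$.
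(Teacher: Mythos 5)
Your proof is correct and takes essentially the same approach as the paper: the inequality is precisely the specialization of Lemma~\ref{lem:multiplicities and morphisms} to $\Phi$ induced by $\sgn\colon\R\to\S$, whose inductive proof (driven by Corollary~\ref{cor:morphism-factorization}, that hyperfield morphisms carry factorizations to factorizations) you reprise concretely, and the first equality is the routine UFD bookkeeping the paper leaves implicit in the ``$=$ Lemma'' attribution. The one point you should make explicit in your induction is the monotonicity $\mult^\S_l(\mathcal F')\le\mult^\S_l(\mathcal F)$ for $\emptyset\ne\mathcal F'\subseteq\mathcal F$ (noted at the start of the proof of Lemma~\ref{lem:hyperfield multiplicity of a set is a maximum}), which is what lets you pass from $(f')^\sgn\in(f^\sgn:l)$ and $\mult^\S_l\big((f')^\sgn\big)\ge m-1$ to $\mult^\S_l\big((f^\sgn:l)\big)\ge m-1$ and hence $\mult^\S_l(f^\sgn)\ge m$.
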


Even in the one variable case, a real polynomial might have complex roots, meaning its observed number of positive roots could be less than the maximum allowed by its sign configuration.
We define the relative multiplicity (with respect to $\sgn$) of $l$ in a polynomial $g\in \S[\bm x]$, by
\[
\mult^\sgn_l(g)= \max\{\mult^\R_{\sgn^{-1}\{l\}}(f) : f^\sgn=g\}.
\]
Then the problem of finding the sharp upper bound for the number of linear factors with prescribed sign pattern in a polynomial with coefficients of prescribed signs becomes the question of determining the relative multiplicities $\mult^\sgn_l(g)$. As an immediate consequence of the Theorem \ref{intro:bounding number of factors}, we obtain the following corollary.

\begin{maincorollary}[= Proposition~\ref{prop:comparison relative multiplicity and hyperfield multiplicity}]
\label{intro:bound not sharp}
    For $l\in \S[\bm x]$ of degree $1$ and $g\in \S[\bm x]$ arbitrary we have
    \[
    \mult^\sgn_l(g)\leq \mult^\S_l(g) .
    \]
\end{maincorollary}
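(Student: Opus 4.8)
The plan is to read this off directly from Theorem~\ref{intro:bounding number of factors}; essentially all of the substance has already been packaged there, so the argument should be short.

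First I would check that the maximum defining $\mult^\sgn_l(g)$ is taken over a nonempty set: every $g \in \S[\bm x]$ admits a lift $f \in \R[\bm x]$ with $f^\sgn = g$, obtained by replacing each coefficient $+1$, $-1$, $0$ of $g$ by $1$, $-1$, $0$ respectively. Hence $\mult^\sgn_l(g)$ is a well-defined quantity.

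Next, for an arbitrary lift $f$ of $g$, Theorem~\ref{intro:bounding number of factors} applied to $f$ and $l$ gives
\[
\mult^\R_{\sgn^{-1}\{l\}}(f) \;\le\; \mult^\S_l(f^\sgn) \;=\; \mult^\S_l(g),
\]
where the equality is just $f^\sgn = g$ and, crucially, the right-hand side does not depend on the choice of $f$. Taking the maximum over all lifts $f$ of $g$ on the left --- which is precisely the definition of $\mult^\sgn_l(g)$ --- the uniform bound on the right yields $\mult^\sgn_l(g) \le \mult^\S_l(g)$, as claimed.

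I do not expect a genuine obstacle at this level: the real work sits in Theorem~\ref{intro:bounding number of factors}, namely in showing that a factorization of $f$ over $\R$ descends to a factorization of $f^\sgn$ over $\S$ compatibly with the (nonunique) notions of hyperfield quotient and multiplicity. The only things left to keep in mind are bookkeeping: that $\mult^\S_l(g)$ is finite so the inequality is not vacuous (this follows from the general boundedness of hyperfield multiplicities of a fixed polynomial), and that the argument above goes through verbatim with a supremum in place of the maximum, should one prefer not to assume the latter is attained.
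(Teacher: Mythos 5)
Your proof is correct and takes essentially the same route as the paper. The paper's proof of Proposition~\ref{prop:comparison relative multiplicity and hyperfield multiplicity} applies Lemma~\ref{lem:multiplicities and morphisms} directly to the sets $\sgn^{-1}\{g\}$ and $\sgn^{-1}\{l\}$ with $\Phi$ the map induced by $\sgn$, which is what you recover by unpacking $\mult^\sgn_l(g)$ as a maximum over lifts (via Lemma~\ref{lem:hyperfield multiplicity of a set is a maximum}) and applying the pointwise inequality of Theorem~\ref{intro:bounding number of factors} to each lift.
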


Note that we prove Corollary~\ref{intro:bound not sharp} in much greater generality, where $\sgn$ is replaced by an arbitrary morphism of hyperfields.

\begin{example*}
    Let
    \[ f = (x - 1)(x - 2)(x^2 + 2) = x^4 - 3x^3 + 4x^2 - 6x + 4 \in \R[x]. \]
    Then $f^\sign = x^4 - x^3 + x^2 - x + 1 \in \S[x]$. By Descartes' rule~\cite[Theorem~C]{BL}, we have $\mult^\S_{x - 1}(f^\sign) = 4$ (the number of sign changes) but
    \[
        \mult^\R_{\sign^{-1}\{x - 1\}}(f) = \mult^\R_{x - 1} f + \mult^\R_{x - 2} f = 2.
    \]
    On the other hand, $\mult^\sign_{x - 1}(f^\sign) = 4$ since, for example, $(x - 1)^4$ is a real polynomial in $\sign^{-1}\{f^\sign\}$ with $4$ positive roots.
\end{example*}

The sharpness in Descartes' rule of signs for univariate polynomials means precisely that $\mult^\sign_l(g) = \mult^\S_l(g)$ for any $g \in \S[x]$. 
In more than one variable, this is not true.

\begin{maintheorem}[= Example~\ref{ex:hyperfield multiplicity larger than relative multiplicity}]
    There exists a degree-$3$ polynomial $g\in \S[x,y]$ and a degree-$1$ polynomial $l\in \S[x,y]$ with
    \[
     \mult^\sgn_l(g) < \mult^\S_l(g) .
    \]
\end{maintheorem}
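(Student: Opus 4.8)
The plan is to exhibit an explicit $g \in \S[x,y]$ and $l \in \S[x,y]$ of the stated degrees, compute $\mult^\S_l(g)$ combinatorially, and then show by a case analysis that no real lift $f$ of $g$ factors $\sgn^{-1}\{l\}$ the same number of times. Guided by Corollary~\ref{intro:bound not sharp}, I only need to rule out equality, so the core of the argument is a lower bound on $\mult^\S_l(g)$ that exceeds what any real polynomial with sign pattern $g$ can achieve. A natural candidate is $l = x + y - 1$ (or a sign variant), since its fiber $\sgn^{-1}\{l\}$ consists of lines $\alpha x + \beta y - \gamma$ with $\alpha, \beta, \gamma > 0$, and one should take for $g$ something that looks like it ``ought'' to be a cube of such a line over $\S$ — e.g.\ the sign pattern of $(x + y - 1)^3$ or a slight perturbation thereof — but which, as a real polynomial, cannot actually be written as a product of three lines from that fiber together with any real quartic-free remainder.

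First I would pin down the hyperfield side: using the definition of $\mult^\S_l(g)$ (Definition~\ref{def:hyperfield multiplicity}) and the univariate Descartes machinery of Baker–Lorscheid applied along a generic line, or a direct computation, I would verify that for my chosen $g$ one can peel off $l$ three times in $\S[x,y]$, i.e.\ $\mult^\S_l(g) \geq 3$ (and equals $3$ by a degree count). Then I would turn to the real side and suppose for contradiction that some $f \in \R[x,y]$ with $f^\sgn = g$ satisfies $\mult^\R_{\sgn^{-1}\{l\}}(f) = 3$; by Main Theorem~A this means $f = k_1 k_2 k_3$ with each $k_i = \alpha_i x + \beta_i y - \gamma_i$, all coefficients positive. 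Expanding this product and comparing signs of all ten coefficients of a bivariate cubic against the prescribed pattern $g$ gives a system of strict sign inequalities in the six parameters $\alpha_i, \beta_i, \gamma_i$; the task is to show this system is infeasible while the ``all positive lines'' obstruction is exactly what the $\S$-computation cannot see.

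The main obstacle is choosing $g$ so that the real infeasibility is both true and checkable by hand: if $g$ is literally $(x+y-1)^{\sgn \, 3}$ then $f = (x+y-1)^3$ works and there is no gap, so I must perturb one coefficient's sign to break real factorizability while keeping the $\S$-level factorization (which is lossy about cross-terms) intact — this is precisely the slack that makes $\mult^\S$ strictly larger. Concretely I expect to need a mixed sign among the degree-$2$ or degree-$1$ coefficients that is consistent with $l + l + l$ over $\S$ (because $\S$-addition is multivalued, $+ {}+{} + {}= \S$ absorbs it) but forces, over $\R$, a relation like $\alpha_1\beta_2 + \alpha_2\beta_1 < 0$ among positive quantities, a contradiction. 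Once the right $g$ is found, the remaining work is routine: expand the generic product of three positive lines, read off the coefficient signs, and exhibit the offending inequality; I would also double-check that no lift $f$ achieves multiplicity $3$ via a non-reduced factorization (two equal lines, or a line appearing with a complex-conjugate-free quadratic cofactor) — but since all factors in $\sgn^{-1}\{l\}$ are real lines, unique factorization in $\R[x,y]$ makes this a finite check.
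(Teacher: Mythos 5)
There is a genuine gap: your plan targets $\mult^\S_l(g) = 3$ for a degree-$3$ polynomial $g$ obtained by perturbing one sign of $l^{\sgn\,3}$, and your justification is that ``$\S$-addition is multivalued, $+\,+\,+ = \S$ absorbs it.'' That claim is false. In $\S$, same-sign sums are \emph{single}-valued: $1 \boxplus 1 = \{1\}$ and $-1 \boxplus -1 = \{-1\}$; only the mixed sum $1 \boxplus (-1) = \S$ is genuinely multivalued. Consequently, if $l = 1 + x + y$ (all positive) then $l \cdot l \cdot l$ over $\S$ involves only same-sign additions and is a singleton --- the all-$+$ dense cubic. Up to a unit, the \emph{only} degree-$3$ polynomial $g$ with $\mult^\S_l(g) = 3$ is $\pm l^{\sgn\,3}$, and for that $g$ the real lift $\pm(1+x+y)^3$ already achieves $\mult^\sgn_l(g) = 3$. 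Any sign perturbation of a coefficient destroys the $\S$-factorization simultaneously with the real one, so there is no gap available at multiplicity $3$ in degree $3$; the case split you propose (expanding a product of three positive lines and finding a sign obstruction) has no target polynomial to apply to.

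The paper instead works at multiplicity $1$ versus $0$, where the slack you want does appear, and for the right reason: it chooses a degree-$3$ $f$ whose unique $\S$-quotient by $l$ is a degree-$2$ polynomial $g$ with \emph{mixed} signs (some $+$ and some $-$ among the coefficients of $g$). Because $g$ has mixed signs, the product $l \cdot g$ over $\S$ contains genuine $+ \boxplus -$ sums, so $l \cdot g$ is a nontrivial set and $f$ is a particular element of it; this gives $\mult^\S_l(f) \ge 1$. Then, writing a general real lift $g' = 1 - ax - by + cx^2 - dxy + ey^2$ of $g$ with $a,\dots,e > 0$ and expanding $(1+x+y)g'$, matching signs to $f$ produces the cyclic chain $1 < a < c < d < e < b < 1$, a contradiction; hence $\mult^\sgn_l(f) = 0$. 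The moral is that the multivaluedness you need comes from sign cancellation in the quotient, not from stacking same-sign sums, so the correct place to look is the smallest multiplicity, with a quotient whose coefficients change sign.
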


In addition to not being a sharp bound for the relative multiplicity, we do not have a combinatorial description for the multiplicity $\mult^\S_l(g)$ like in the univariate case. This makes the multiplicity hard to compute. In practice, it is often sufficient to work with what we call the boundary multiplicity $\bmult^\S_l(g)$, which is the maximum of the multiplicities obtained after setting one of the variables to $0$.

\subsection*{Subdivisions, Geometry and Multiplicities}
Something that makes factoring tropical polynomials easier than factoring sign polynomials is that there is a geometry associated to tropical polynomials. A linear factor of a tropical polynomial corresponds to a tropical hyperplane within the tropical hypersurface defined by that polynomial. For a polynomial over $\TR$, we define enriched tropical hypersurfaces and consider the multiplicities of enriched linear hyperplanes. We call this the \emph{(enriched) geometric multiplicity}. See Figure~\ref{fig:signed-subdivision} for a demonstration of this idea.

Looking the opposite way, if we have a polynomial over $\S$, then we can try to perturb the coefficients a little bit to yield a polynomial over $\TR$. Where the geometric multiplicity tells us to exploit an existing subdivision of the Newton polytope, here we \emph{impose} a subdivision by perturbing coefficients. We call this the \emph{perturbation multiplicity}, $\pmult^\S_l(g)$. The perturbation multiplicity is a lower bound on the hyperfield multiplicity because factoring with respect to an imposed subdivision is stricter than factoring irrespective of a subdivision. Moreover, it is also a lower bound on the relative multiplicity because the factors with the imposed subdivision can be lifted to, say, the real Puiseux series.

\begin{maintheorem}[= Corollary~\ref{cor:comparison of multiplicity with perturbation multiplicity}, Proposition~\ref{prop:comparison relative multiplicity and hyperfield multiplicity}, Corollary~\ref{cor:mult leq bmult}, Theorem~\ref{thm:multiplicity for dense quadratic poly in 2 variables}]
    If $f\in \S[\bm x]$ is dense---meaning every monomial of degree $\le \deg f$ has a non-zero coefficient---and $l\in \S[\bm x]$ has degree $1$, then we have
    \[
        \pmult^\S_l(f) \le  \mult^\sign_l(f)  \le \mult^\S_p(f) \le \bmult^\S_p(f).
    \]
    If $f$ is dense of degree two in two variables, then we have equality everywhere.
\end{maintheorem}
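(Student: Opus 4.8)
The displayed four-term chain is the concatenation of three comparison results proved earlier. The bound $\pmult^\S_l(f)\le\mult^\sign_l(f)$ is Corollary~\ref{cor:comparison of multiplicity with perturbation multiplicity}: a factorization realizing the perturbation multiplicity comes from a polynomial over $\TR$ whose regular subdivision of $\newt(f)$ refines the subdivision cut out by the chosen linear factor together with its cofactor, and lifting the coefficients to real Puiseux series turns its enriched linear factors into genuine degree-$1$ factors over a real field, so it also bounds the relative multiplicity from below. The bound $\mult^\sign_l(f)\le\mult^\S_l(f)$ is Corollary~\ref{intro:bound not sharp} (=~Proposition~\ref{prop:comparison relative multiplicity and hyperfield multiplicity}), an immediate consequence of Lemma~\ref{lem:multiplicities and morphisms} applied to an optimal real lift of $f$. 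Finally $\mult^\S_l(f)\le\bmult^\S_l(f)$ is Corollary~\ref{cor:mult leq bmult}. Stringing these together proves the chain for every dense $f$ and every degree-$1$ form $l$.

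For the equality statement it is enough, by the chain just assembled, to prove the single reverse inequality $\bmult^\S_l(f)\le\pmult^\S_l(f)$ when $f$ is dense of degree two in the variables $x,y$; this is the content of Theorem~\ref{thm:multiplicity for dense quadratic poly in 2 variables}. The left-hand side is tractable: being a maximum over the one-variable restrictions of $f$, it is computed by the sharp univariate Descartes rule of Baker--Lorscheid (\cite[Theorem~C]{BL}) as an explicit count of sign alternations in a length-three sign sequence, and in particular $\bmult^\S_l(f)\le 2$. So the problem reduces to the following: whenever this count equals $m$, produce a polynomial $F$ over $\TR$ lifting $f$ whose associated regular subdivision of $\newt(f)=2\Delta$ contains $m$ enriched tropical lines with sign pattern $l$, which then witnesses $\pmult^\S_l(f)\ge m$.

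I would first shrink the problem by symmetry: the group generated by $x\mapsto -x$, $y\mapsto -y$, the swap $x\leftrightarrow y$, and $f\mapsto -f$ preserves all four multiplicities, so it suffices to treat one representative $l$ from each orbit of degree-$1$ sign forms and, for that $l$, one $f$ from each orbit of the $2^6$ dense sign quadratics under the stabilizer of $l$. For each surviving pair one reads $m=\bmult^\S_l(f)$ off the univariate formula and then builds the perturbation explicitly: $m=0$ is vacuous; $m=1$ needs a single enriched tropical line contained in the enriched tropical conic of $F$; and $m=2$ amounts to writing $2\Delta$ as a Minkowski sum $\Delta+\Delta$ compatible with a chosen unimodular triangulation and taking $F$ to be a product of two enriched tropical lines, then checking that the six coefficient signs of the product agree with $f$. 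The real work---and the point where I expect the difficulty to concentrate---is precisely these $m=2$ realizations: one must choose the valuations and signs of the two linear factors so that all six coefficients of their product simultaneously land on the prescribed signs, and it is density of $f$, which makes the full triangulation of $2\Delta$ available, that leaves enough freedom to do so. Verifying the enriched tropical arithmetic case by case, and organizing the finite list of sign patterns so that none is overlooked, is the remaining bookkeeping obstacle.
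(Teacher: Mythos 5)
Your outline matches the paper's proof: the first three inequalities are assembled from earlier results, and the equality for dense quadratics in two variables is reduced to showing $\bmult^\S_l(f)\le\pmult^\S_l(f)$ by normalizing via symmetry to a small number of sign configurations and exhibiting, for each, a signed mixed subdivision of $2\Delta$ that realizes the boundary multiplicity (the paper normalizes $f$ to four cases via the $S_3\ltimes(\S^*)^3$ action on the homogenization and then checks each $l$, where your plan fixes $l$ first and ranges over $f$ modulo its stabilizer; the two reductions are equivalent). One slip: $\bmult^\S_l(f)$ is the \emph{minimum}, not the maximum, of the multiplicities of the boundary restrictions of the homogenization, which is exactly what makes $\mult^\S\le\bmult^\S$ a nontrivial constraint and is what you need when reading off the values in the case analysis. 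Also, the inequality $\pmult^\S_l(f)\le\mult^\sgn_l(f)$ is Proposition~\ref{prop:smult leq rel-mult} (Corollary~\ref{cor:comparison of multiplicity with perturbation multiplicity} gives the weaker $\pmult^\S_l(f)\le\mult^\S_l(f)$), though your sketch of the argument via lifting to a valued real closed field is the right one.
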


\subsection*{Systems of equations}

Let $\varphi\colon K\to H$ be a morphism from a field $K$ to a hyperfield $H$. 
Given polynomials $g_1,\ldots, g_n\in H[x_1,\ldots,x_n]$ and $\bm h \in (H^*)^n$ we denote by
\[
N^\varphi_{\bm h}(g_1,\ldots, g_n)
\]
the maximal number of solutions $\bm x$ with $\varphi(\bm x) = \bm h$ that a system $f_1(\bm x)=\dots =f_n(\bm x)=0$ of equations over $K$ with finite solution set (in $\overline K$) and $f_i^\sgn=g_i$ can have. For $K=\C$ and $H=\K$, the answer is given by the Bernstein-Khovanskii-Kushnirenko (BKK) theorem. For $\varphi=\sgn \colon \R \to \S$ these are precisely the numbers studied by Itenberg and Roy \cite{IR}. Let $f_i\in K[\bm x]$ with $f_i^\varphi=g_i$. Introducing an auxiliary linear form $l = 1 + y_1x_1 \dots y_nx_n$ with indeterminate coefficients and taking the (mixed sparse) resultant $\result{f_1,\ldots, f_n}\in K[\bm y]$ of $f_1,\ldots, f_n, l$, finding solutions to the system of equations 
\[
f_1(\bm x)= \dots =f_n(\bm x)=0
\]
is equivalent to finding linear factors of $R$. More precisely, if the coefficients of $f_1, \dots, f_n$ are generic, then we have 
\[
    \result{f_1,\dots,f_n} \propto \prod_{\bm a \in V(f_i) \subset (\overline K^*)^n} (1 + a_1y_1 + \dots + a_ny_n),
\]
with the proportionality being up to a unit.
The  polynomial $\result{f_1,\dots,f_n}$ is a specialization of a polynomial $\result{A_1,\dots,A_n} \in \Z[\bm y]$ which is determined just by the support sets $A_i = \supp(f_i)$.
Resultants allow us to apply our techniques to systems of equations:

\begin{maintheorem}[=Theorem~\ref{thm:upper bound by resultant}]
    Let $\result{g_1,\ldots, g_n}\subseteq H[\bm y]$ be the set of polynomials obtained by evaluating the resultant $\widetilde R_{A_1,\dots,A_n} \in \Z[\bm y]$ at the coefficients of the $g_i$, where $A_i = \supp(g_i)$. Moreover, let $l_{\bm h}= 1 + \sum h_iy_i$. Then we have
    \[
    N^\varphi_{\bm h}(g_1,\ldots, g_n) \leq \max\{ \mult^H_{l_{\bm h}}(r) : r\in \result{g_1,\ldots, g_n}\}.
    \]
\end{maintheorem}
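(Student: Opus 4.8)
The plan is to realize the extremal system, replace it by its resultant, and then feed that resultant into Lemma~\ref{lem:multiplicities and morphisms}. Write $N = N^\varphi_{\bm h}(g_1,\ldots,g_n)$ and choose $f_1,\ldots,f_n\in K[\bm x]$ with $f_i^\varphi = g_i$, with finite solution set in $\overline K^n$, and having $N$ distinct common zeros $\bm a^{(1)},\ldots,\bm a^{(N)}$ with $\varphi(\bm a^{(j)}) = \bm h$. A morphism of hyperfields out of a field sends $0$ to $0$ and units to units, so $\varphi(\bm a^{(j)}) = \bm h\in(H^*)^n$ forces $\bm a^{(j)}\in(K^*)^n$; in particular every $\bm a^{(j)}$ lies in the dense torus, and $\supp f_i = \supp g_i = A_i$. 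Introduce $l = 1 + y_1x_1+\cdots+y_nx_n$ and set $R := \result{f_1,\ldots,f_n}\in K[\bm y]$, i.e.\ the value of $\widetilde R_{A_1,\ldots,A_n}$ at the coefficients of the $f_i$.

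The heart of the argument is that $R$ is divisible by $\prod_{j=1}^N k_j$, where $k_j := 1 + a^{(j)}_1 y_1 + \cdots + a^{(j)}_n y_n$. Each $k_j$ has degree $1$ because $\bm a^{(j)}$ lies in the torus, and the $k_j$ are pairwise non‑associate in $K[\bm y]$ (comparing constant terms in $k_j = c\,k_{j'}$ forces $c=1$, hence $\bm a^{(j)} = \bm a^{(j')}$). For divisibility I only use the easy direction of the fundamental theorem on sparse resultants: for $\bm y_0$ on the hyperplane $V(k_j)$, the torus point $\bm a^{(j)}$ is a common zero of $f_1,\ldots,f_n$ and of $1+\sum y_{0,i}x_i$, so $R(\bm y_0)=0$; thus $R$ vanishes on the irreducible hypersurface $V(k_j)$, whence $k_j\mid R$. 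Since the $k_j$ are pairwise non‑associate primes of the UFD $K[\bm y]$ all dividing $R$, we get $\sum_{j=1}^N \mult^K_{k_j}(R)\ge N$. (If $R=0$ the assertion is vacuous: applying $\varphi$ coefficientwise to the identity $R = \widetilde R_{A_1,\ldots,A_n}(\text{coeffs of } f_\bullet)$ puts $0=R^\varphi$ into $\result{g_1,\ldots,g_n}$ and $\mult^H_{l_{\bm h}}(0)=\infty$, so we may assume $R\ne 0$.)

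Next I would apply the morphism. Applying $\varphi$ coefficientwise to $k_j$ and using $\varphi(a^{(j)}_i)=h_i$ and $\varphi(1)=1$ gives $k_j^\varphi = 1 + \sum h_iy_i = l_{\bm h}$, so $k_1,\ldots,k_N$ are $N$ pairwise non‑associate representatives in $\varphi^{-1}\{l_{\bm h}\}$, each dividing $R$. Lemma~\ref{lem:multiplicities and morphisms}, applied to the hyperfield morphism $\varphi\colon K\to H$, then gives
\[
N \;\le\; \sum_{j=1}^{N} \mult^K_{k_j}(R) \;\le\; \mult^K_{\varphi^{-1}\{l_{\bm h}\}}(R) \;\le\; \mult^H_{l_{\bm h}}(R^\varphi) .
\]
Finally, $R$ is the value of the integer polynomial $\widetilde R_{A_1,\ldots,A_n}$ at the coefficients of the $f_i$, and a hyperfield morphism carries the value of an integer polynomial at field elements into the (multi‑valued) value at the images; applying this coefficient by coefficient shows that $R^\varphi$ is one of the polynomials obtained by evaluating $\widetilde R_{A_1,\ldots,A_n}$ at the coefficients of the $g_i = f_i^\varphi$, i.e.\ $R^\varphi\in\result{g_1,\ldots,g_n}$. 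Combining with the displayed inequality yields $N^\varphi_{\bm h}(g_1,\ldots,g_n)\le\max\{\mult^H_{l_{\bm h}}(r):r\in\result{g_1,\ldots,g_n}\}$.

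The main obstacle I anticipate is the linear factorization of $R$ in the required generality: for the (possibly very special) specialization at hand one no longer has the clean product formula for the resultant, and one must be careful because the vanishing locus of the resultant also records common zeros on the boundary strata of the toric compactification. The point of the argument above is that this is sidestepped entirely—only the implication ``a common zero in the torus $\Rightarrow$ the resultant vanishes'' is used, together with the observation that $R=0$ makes the bound trivial—so that no hypothesis beyond what is already built into the definition of $N^\varphi_{\bm h}$ is actually required.
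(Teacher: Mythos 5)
Your proof is correct, and it follows the same two--step strategy as the paper's: (i) relate $N$ to the multiplicity $\mult^K_{\varphi^{-1}\{l_{\bm h}\}}(R)$ of the resultant over $K$, and (ii) push that bound into $H$ via Lemma~\ref{lem:multiplicities and morphisms} and the observation that $R^\varphi$ lies in $\result{g_1,\dots,g_n}$. Where you diverge from the paper is in step (i). The paper invokes Lemma~\ref{lem:factors of resultant}, which as stated yields an \emph{equality} between the number of torus solutions with given $\varphi$-image and $\mult^K_{\varphi^{-1}\{l\}}(R)$, but is formulated (and proved, via the full product decomposition of $R$) for \emph{generic} specializations. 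Since the systems realizing the supremum in $N^\varphi_{\bm h}$ are subject to the hard constraint $f_i^\varphi = g_i$, they need not be generic, so the cited equality is not directly available there. Your argument sidesteps this by re-deriving only the inequality actually needed: each common torus zero $\bm a^{(j)}$ forces the irreducible linear form $k_j$ to divide $R$, because $R$ vanishes on all of $V(k_j)$ by the easy (and hypothesis-free) direction of the resultant's defining property. Together with the pairwise non-associateness of the $k_j$ this gives $\mult^K_{\varphi^{-1}\{l_{\bm h}\}}(R)\ge N$ without any genericity assumption. You also explicitly dispose of the degenerate case $R=0$, which the paper does not address. In short, the argument is correct and, in this one step, slightly more careful and self-contained than the paper's.
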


We observe in several examples that the bound is far from sharp. However, applying the theorem to the counterexample to the Itenberg-Roy conjecture given by Li and Wang \cite{LW} yields the correct bound and shows that Li and Wang have in fact chosen an example where the number of positive solutions is maximal for the given choices of supports and signs.

We also study the numbers $N^\varphi_{\bm h}(g_1,\ldots,g_n)$ when $\varphi$ is a valuation and $H=\T$ or $H=\TR$, depending on whether $K$ is algebraically closed or real closed. In this case each of the $g_i$ defines a tropical hypersurface $V(g_i)$ and we study the case where the intersection $\bigcap_{i=1}^n V(g_i)$ is transverse at the image of $\bm h$ in $\R^n$ (this means that if $H=\TR$ we apply the projection $\TR\to \T$ coordinate-wise). Using a result by Sturmfels on initial forms of resultants \cite{sturmfels-resultants}, we prove the following result.

\begin{maintheorem}[= Theorem~\ref{thm:intersection multiplicity in transversal case}]
\label{intro:systems of equations in transverse case}
    Assume that $H=\T$, that $\varphi$ is a valuation, and that $\bigcap_{i=1}^n V(g_i)$ meets transversely at $\bm h$. Then $N_{\bm h}^\varphi(g_1,\ldots,g_n)$ equals the multiplicity of the tropical intersection product $V(g_1)\cdots V(g_n)$ at $\bm h$. If $H=\TR$ and $\varphi$ is the ``signed valuation'', then $N^\varphi_{\bm h}(g_1,\ldots,g_n)$ equals $1$ if $\bm h$ is an alternating point of $V(g_1)\cdots V(g_n)$ and $0$ otherwise (see page \pageref{def:alternating} for a definition of alternating).
\end{maintheorem}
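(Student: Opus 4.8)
The plan is to pin down $N^\varphi_{\bm h}$ from both sides: an upper bound from Theorem~\ref{thm:upper bound by resultant} together with Sturmfels' description of initial degenerations of sparse resultants, and a matching lower bound from an explicit lift built out of the initial forms at $\bm w\coloneqq\val(\bm h)$. Throughout, write $e_i\subseteq\Newt(g_i)$ for the edge dual to the facet $F_i$ of $V(g_i)$ through $\bm w$, write $u_i$ for its primitive direction vector and $\ell_i$ for its lattice length; transversality at $\bm h$ says exactly that $\bm w$ lies in the relative interior of each $F_i$ and that $(u_1,\dots,u_n)$ is a basis of $\R^n$, in which case the multiplicity of $V(g_1)\cdots V(g_n)$ at $\bm w$ equals $\mathrm{MV}(e_1,\dots,e_n)=\ell_1\cdots\ell_n\cdot|\det(u_1,\dots,u_n)|$.

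\emph{Upper bound.} By Theorem~\ref{thm:upper bound by resultant}, $N^\varphi_{\bm h}(g_1,\dots,g_n)\le\max\{\mult^H_{l_{\bm h}}(r):r\in\result{g_1,\dots,g_n}\}$, so it suffices to bound $\mult^H_{l_{\bm h}}(r)$ for each evaluation $r$ of the universal resultant $\widetilde R_{A_1,\dots,A_n}$ (which also involves the auxiliary form $l$, of support $A_0=\{0,e_1,\dots,e_n\}$) at the coefficients of the $g_i$. The valuations of those coefficients induce a regular mixed subdivision of $A_0+A_1+\dots+A_n$, and by Sturmfels the corresponding initial form of $\widetilde R$ is, up to a unit, a product of resultants attached to the cells of that subdivision; passing to valuations, each $r$ is, up to the sum-indeterminacy inherent in a hyperfield, a tropical product of tropical resultants indexed by these cells. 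Since the tropical hyperplane of $l_{\bm h}$ has its vertex at $-\bm w$, the form $l_{\bm h}$ can divide $r$ only through the cells whose dual cone in $\R^n$ contains $\bm w$, and transversality isolates a single such mixed cell, namely $A_0+e_1+\dots+e_n$, whose attached resultant is a unit times $l_{\bm h}^{\mathrm{MV}(e_1,\dots,e_n)}$. Hence $\mult^\T_{l_{\bm h}}(r)\le\mathrm{MV}(e_1,\dots,e_n)$ for every $r$, which is the upper bound needed when $H=\T$. For $H=\TR$ one runs the same argument keeping track of signs; the cell resultant is now a signed power of $l_{\bm h}$, and the one-variable sharpness of Descartes' rule over $\TR$ applied to the signed binomials attached to the edges $e_i$ forces $\mult^{\TR}_{l_{\bm h}}(r)\le1$, with equality possible only when the sign data of the $g_i$ along the $e_i$ is compatible with $\bm h$ — that is, only when $\bm h$ is an alternating point of $V(g_1)\cdots V(g_n)$.

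\emph{Lower bound.} I would construct $f_i\in K[\bm x]$ with $f_i^\varphi=g_i$ by taking $\initial_{\bm w}(f_i)$ to be, up to a monomial, a polynomial $q_i(\bm x^{u_i})$ of degree $\ell_i$ with simple nonzero roots — a binomial, $\bm w$ being in the relative interior of the facet $F_i$ — and choosing the remaining coefficients of $f_i$ generically so that $\bigcap_iV(f_i)$ is finite, which is possible by the BKK theorem. Any common solution $\bm a$ with $\val(\bm a)=\bm w$ restricts to a solution of the initial system $q_1(\bm x^{u_1})=\dots=q_n(\bm x^{u_n})=0$; the latter is nondegenerate at each of its solutions — its Jacobian there is an invertible diagonal matrix times the invertible matrix $(u_1\,\cdots\,u_n)$ — so by Hensel's lemma each solution of the initial system lifts uniquely to a common solution $\bm a$ of the $f_i$ with $\varphi(\bm a)=\bm h$. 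The initial system has $\ell_1\cdots\ell_n\cdot|\det(u_1,\dots,u_n)|=\mathrm{MV}(e_1,\dots,e_n)$ solutions over $\overline K$, which gives $N^\varphi_{\bm h}\ge\mathrm{MV}(e_1,\dots,e_n)$ and, together with the upper bound, settles the case $H=\T$. For $H=\TR$ the count is done over the real closed field: a solution with sign vector $\sgn(\bm h)$ is a choice, for each $i$, of a root $t_i$ of $q_i$ of the sign $\sgn(\bm h)^{u_i}$ prescribed by $\bm h$ together with the unique positive solution of $\bm x^{u_i}=|t_i|$; since a real binomial has at most one root of each sign there is at most one such solution, and exactly one precisely when $\bm h$ is alternating, whence $N^\varphi_{\bm h}=1$ if $\bm h$ is alternating and $0$ otherwise.

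The step I expect to be the main obstacle is the resultant bookkeeping in the upper bound: showing that transversality isolates a single mixed cell of the regular subdivision, identifying the resultant attached to it with a power of $l_{\bm h}$ of exponent $\mathrm{MV}(e_1,\dots,e_n)$, and — for $H=\TR$ — reconciling the sign condition on the edge data $e_i$ that comes out of this analysis with the combinatorial definition of an alternating point. Once that is settled, the binomial initial-form count, the Hensel lift and the mixed-volume computation are routine.
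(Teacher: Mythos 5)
Your blueprint — sparse resultants, Sturmfels' theorem on initial forms of resultants, a reduction to a binomial system at the vertex of the mixed subdivision — is the same skeleton as the paper's proof, and your lower bound via a Hensel lift from the binomial initial system is essentially the paper's Lemma~\ref{lem:lifting real solutions of multiplicity 1} together with Proposition~\ref{prop:initial ideal gives multiplicity for product of linear forms, real case}. But the architecture is genuinely different, and the difference is where your gap sits. The paper never invokes Theorem~\ref{thm:upper bound by resultant}; instead it shows directly, for every generic lift $g_i \in K[\bm x]$ of $f_i$, that
$\bigl|\bigcap_i V(g_i)\cap\varphi^{-1}\{\bm h\}\bigr| = \mult^K_{\varphi^{-1}\{l\}}(R)$
for the specific resultant $R = \result{g_1,\dots,g_n} \in K[\bm y]$, passes to the residue field $\kappa$ via the initial-form propositions, applies Sturmfels' product formula over $\kappa$, and comes back to a count of binomial-system solutions — all at the level of actual fields, never at the level of the hyperfield multiplicity $\mult^H_{l_{\bm h}}(\cdot)$.

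You instead bound from above through $N^\varphi_{\bm h} \le \max\{\mult^H_{l_{\bm h}}(r): r \in \result{g_1,\dots,g_n}\}$ and then claim each $r$ satisfies $\mult^H_{l_{\bm h}}(r) \le m^H(\bm h;\cdots)$. This is the gap. The set $\result{g_1,\dots,g_n} \subseteq H[\bm y]$ is obtained by \emph{hyperaddition}, which is multivalued; so it contains polynomials $r$ whose coefficients differ, sometimes drastically, from the ``minimal'' values that a field-level resultant would produce. Sturmfels' theorem controls the initial form of the integer-coefficient universal resultant; it says nothing about the Newton subdivision, polynomial function, or multiplicity of an arbitrary element of the hyperfield set once cancellations (which in $\T$ allow a coefficient to jump to \emph{any} higher valuation) are permitted. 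Your phrase ``up to the sum-indeterminacy inherent in a hyperfield'' names exactly the difficulty without addressing it. Moreover the paper explicitly warns that the bound in Theorem~\ref{thm:upper bound by resultant} is in general far from sharp, which is an indication that the inequality you need — that the hyperfield-level bound collapses to $m^H$ in the transverse case — requires a real argument, not a translation of the field-level statement. Unless you can show that the cancellations occurring in $\result{g_1,\dots,g_n}$ cannot enlarge the geometric multiplicity of $V(l_{\bm h}^\nu)$ in $V(r)$ near $-\bm w$, your upper bound does not close, and the cleanest way to avoid the problem is to do what the paper does: stay over $K$ and $\kappa$ and only interpret the final binomial count via Lemma~\ref{lem:solutions to binomial system} and Lemma~\ref{lem:at most one positive root of binomial system}.

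Two smaller points. First, the claim in your $\TR$ lower bound that ``a real binomial has at most one root of each sign'' and that the system picks out a unique point in the prescribed orthant exactly when $\bm h$ is alternating is correct, but this is already packaged in the paper as Lemma~\ref{lem:at most one positive root of binomial system} (which cites Itenberg–Roy); you should route through it rather than re-derive it. Second, your statement that the initial form is ``a polynomial $q_i(\bm x^{u_i})$ of degree $\ell_i$ with simple nonzero roots — a binomial'' is fine, but transversality already forces the initial form to be a binomial (that is the content of the facet condition), so the ``simple roots'' hypothesis is automatic and not something you need to impose; leaning on it obscures that the key input is exactly the combinatorics of the edge $e_i$.
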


Combining Theorem~\ref{intro:systems of equations in transverse case} with the completeness of the theory of real closed fields, we obtain a combinatorial multiplicity $\sN^\sgn_{\bm h}(g_1,\ldots,g_n)$ in terms of transverse tropical intersections or, dually, mixed Newton subdivisions. It is analogous to the combinatorial multiplicities $\pmult_l(g)$ and agrees with the numbers appearing in the conjecture of Itenberg and Roy. Our methods allow us to reprove Itenberg and Roy's lower bound.

\begin{maincorollary}[{\cite{IR}}, Corollary~\ref{cor:Itenberg roy lower bound}]
    For $g_1,\ldots, g_n\in \S[x_1,\ldots, x_n]$ and $h\in (\S^*)^n$ we have
    \[
    \sN^\sign_{\bm h}(g_1,\ldots, g_n)\leq N^\sgn_{\bm h}(g_1,\ldots, g_n) .
    \]
\end{maincorollary}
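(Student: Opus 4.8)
The plan is to turn the combinatorial number $\sN^\sgn_{\bm h}$ into an honest polynomial system, first over the real Puiseux series and then over $\R$ by a model‑theoretic transfer. Recall that $\sN^\sgn_{\bm h}(g_1,\ldots,g_n)$ is exactly what Theorem~\ref{thm:intersection multiplicity in transversal case} delivers on the combinatorial side: it is the maximum, taken over all perturbations of $g_1,\ldots,g_n$ to polynomials $\tilde g_1,\ldots,\tilde g_n\in\TR[\bm x]$ whose underlying tropical hypersurfaces meet transversely at every point of their intersection (equivalently, over all mixed subdivisions of the Newton polytopes), of the number of alternating points $\tilde{\bm h}\in(\TR^*)^n$ of the enriched intersection product $V(\tilde g_1)\cdots V(\tilde g_n)$ lying over $\bm h$ under the natural map $\TR\to\S$; that is, of $\sum_{\tilde{\bm h}\mapsto\bm h}N^{\vsign}_{\tilde{\bm h}}(\tilde g_1,\ldots,\tilde g_n)$, each summand being $0$ or $1$ by the transverse case of Theorem~\ref{thm:intersection multiplicity in transversal case}. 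I would fix a perturbation $\tilde g_1,\ldots,\tilde g_n$ attaining this maximum.

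Next I would lift to the field $\mathbf{P}$ of real Puiseux series, which is real closed, equipped with the sign‑of‑leading‑coefficient map $\sgn\colon\mathbf{P}\to\S$ and the signed valuation $\vsign\colon\mathbf{P}\to\TR$; note $\sgn$ factors as $\vsign$ followed by $\TR\to\S$. Choose $f_1,\ldots,f_n\in\mathbf{P}[\bm x]$ with $\supp(f_i)=\supp(g_i)$, with $f_i^{\vsign}=\tilde g_i$, and with coefficients generic subject to these constraints; then $f_i^\sgn=g_i$, so $f_1,\ldots,f_n$ is an admissible system for $N^\sgn_{\bm h}$ over $\mathbf{P}$. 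For generic such $f_i$ the solution set is finite, and—this is where transversality enters—by the results on initial forms of resultants the torus solutions correspond, with multiplicities, to the points of the tropical intersection; applying the $H=\TR$ statement of Theorem~\ref{thm:intersection multiplicity in transversal case} at each $\tilde{\bm h}$ over $\bm h$ shows that, for generic $f_i$, every alternating point over $\bm h$ is the tropicalization of a solution $\bm x\in(\mathbf{P}^*)^n$ with $\sgn(\bm x)=\bm h$, and distinct alternating points give distinct solutions. Hence this system witnesses $N^\sgn_{\bm h}(g_1,\ldots,g_n)\ge\sN^\sgn_{\bm h}(g_1,\ldots,g_n)$ when $N^\sgn_{\bm h}$ is computed over $\mathbf{P}$.

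It remains to transfer this inequality from $\mathbf{P}$ to $\R$. For a fixed integer $m$, the assertion ``$N^\sgn_{\bm h}(g_1,\ldots,g_n)\ge m$'' is expressible by a first‑order sentence in the language of ordered fields: since the supports $A_i=\supp(g_i)$ are fixed, it says that there exist coefficients $(c_{i,\bm a})_{i,\,\bm a\in A_i}$, nonzero and of the signs prescribed by the $g_i$, together with $m$ pairwise distinct points of $(\R^*)^n$ with sign pattern $\bm h$ solving the corresponding system $f_1=\dots=f_n=0$, such that this system has finite solution set in its algebraic closure—the last condition being first‑order because a point of the algebraic closure of an ordered field is a pair of field elements and one may bound the number of solutions by the Bézout number $\prod_i\deg f_i$. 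This sentence holds in the real closed field $\mathbf{P}$, so by completeness of the theory of real closed fields it holds in $\R$ as well; taking $m=\sN^\sgn_{\bm h}(g_1,\ldots,g_n)$ gives the corollary and reproves Itenberg and Roy's lower bound.

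The main obstacle is the middle step. The definition of $N^{\vsign}_{\tilde{\bm h}}$ as a maximum only guarantees, for each single alternating $\tilde{\bm h}$, \emph{some} lift producing a solution there; what is needed is that a \emph{single} generic lift works simultaneously at all alternating points over $\bm h$, i.e.\ that ``having a solution tropicalizing to $\tilde{\bm h}$ with sign pattern $\bm h$'' is a dense (indeed open) condition on the lift—this is precisely what transversality of the tropical intersection buys, through the structure of the associated initial real systems. The accompanying bookkeeping—verifying that non‑alternating points and solutions with sign patterns other than $\bm h$ are not accidentally counted, and that multiplicities at the alternating points are handled correctly—is the delicate part.
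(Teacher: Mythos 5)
Your proposal follows essentially the same two-step strategy as the paper's proof of Corollary~\ref{cor:Itenberg roy lower bound} and the remark following Definition~\ref{def:perturbation multiplicity for systems}: (i) realize the combinatorial count $\sN_{\bm h}$ as an honest count of common real roots with prescribed signed valuation over a suitable valued real closed field, via Theorem~\ref{thm:intersection multiplicity in transversal case}, and (ii) transfer the resulting lower bound to an arbitrary real closed field using completeness of the theory of real closed fields. You have also correctly diagnosed the ``main obstacle'' (a single generic lift must realize the multiplicity at all alternating points of $G$ simultaneously); that is exactly what the ``in fact, for every generic choice'' clause of Theorem~\ref{thm:intersection multiplicity in transversal case} supplies, since $G$ is finite and genericity is a Zariski-open dense condition, and indeed the paper relies on the same clause, albeit more tersely.

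The one small wrinkle you do not flag: you insist on the concrete field $\mathbf P$ of real Puiseux series, whose value group is $\Q$, whereas Theorem~\ref{thm:intersection multiplicity in transversal case} as stated targets $H\rtimes \R$ and the perturbation $\widetilde g_i\in \TR[\bm x]$ can a priori have irrational valuations. To use $\mathbf P$ you should first observe that, because transversality is an open condition and $\Q$ is dense in $\R$, the maximum in the definition of $\sN_{\bm h}$ is already attained at a perturbation with rational valuations (or, as the paper does, simply work over a valued real closed field with value group all of $\R$, such as a field of real Hahn series). With that small adjustment your argument is sound and matches the paper's.
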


\subsection*{Acknowledgement} We thank Matt Baker and Josephine Yu for numerous insightful discussions. We thank Matt Baker and Oliver Lorscheid for providing comments on an earlier draft.

This project has received funding from the Deutsche Forschungsgemeinschaft (DFG, German Research Foundation) TRR 326 \emph{Geometry and Arithmetic of Uniformized Structures}, project number 444845124; and \emph{From Riemann surfaces to tropical curves (and back again)}, project number 456557832.

\subsection*{Notation} \hfill

\noindent
\begin{minipage}{\linewidth}
\begin{center} \textbf{Hyperfields} \end{center}
\begin{tabular}{ll}
     $\K$ & Krasner hyperfield \ref{ex:krasner-hyperfield} \\
     $\S$ & Sign hyperfield \ref{ex:sign-hyperfield} \\
     $\T$ & Tropical hyperfield \ref{ex:tropical-hyperfield} \\
     $H \rtimes \Gamma, \TR$ & Tropical extensions, tropical real hyperfield \ref{ex:tropical-extension} \\
     $ht^w = (h, w)$ & Element of a tropical extension \\
\end{tabular}
\vspace{1 em}
\end{minipage}

\noindent
\begin{minipage}{\linewidth}
\begin{center} \textbf{Maps and Morphisms} \end{center}
\begin{tabular}{ll}
    $\sgn\colon K \to \S$ & The sign of an element of a real field \ref{def:real-field} \\
    $\nu\colon K \to \T$ & A (Krull) valuation \ref{def:valuations} \\
    $f^\varphi, f^\sgn, f^\nu, \text{etc.}$ & Apply $\varphi, \sgn, \nu$, etc.\ to each coefficient \ref{def:fphi} \\
    $\angular\colon K \to \kappa$ & Angular component map for a valued field \ref{def:angular map for a valued field} \\
    $\angular\colon H \rtimes \Gamma \to H$ & Angular component map for a tropical extension \ref{def:valuations} \\
    $\vangular\colon K \to \kappa \rtimes \Gamma$ & Refined valuation \ref{def:angular map for a valued field} \\
    $\vsign\colon K \to \S \rtimes \Gamma$ & Signed valuation \ref{def:signed valuation}\\
    $\PF$ & Polynomial function map \ref{def:polynomial-function} \\
\end{tabular}
\vspace{1 em}
\end{minipage}

\noindent
\begin{minipage}{\linewidth}
\begin{center} \textbf{Multiplicities} \end{center}
\begin{tabular}{ll}
    $\pmult^H$ & Perturbation multiplicity \ref{def:perturbation multiplicity} \\
    $\mult^\varphi$ & Relative multiplicity \ref{def:relative multiplicity} \\
    $\mult^H$  & Hyperfield multiplicity \ref{def:hyperfield multiplicity} \\
    $\bmult^H$ & Boundary multiplicity \ref{def:boundary multiplicity} \\
    $\gmult^H$ & $H$-enriched geometric multiplicity \ref{def:enriched geometric multiplicity} \\
    $N_{\bm h}^\varphi$ & Multiplicity for systems of equations \ref{sec:systems} \\
    $\sN_{\bm h}$ & Perturbation multiplicity for systems of equations \ref{def:perturbation multiplicity for systems}
\end{tabular}
\end{minipage}

\section{Fields and Hyperfields}

Hyperfields are algebraic objects which are well-suited to capture the arithmetic of signs (having forgotten the absolute value) or the arithmetic of absolute values (having forgotten the signs). One can think of a hyperfield as a field but where adding pairs of elements gives a non-empty set subject to the usual rules of commutativity, associativity, distributivity, etc.\ The axiom labeled ``reversible'' behaves as an ersatz subtraction.

\begin{definition}
    A \emph{hyperfield} is a tuple $H = (H, 0, 1, \cdot, \boxplus)$  where
    \begin{itemize}
        \item $0 \ne 1$,
        \item $H^* = (H \setminus \{0\}, 1, \cdot)$ is an Abelian group,
        \item $0$ is an absorbing element: $0 \cdot a = a \cdot 0$ for all $a \in H$.
    \end{itemize}
    Additionally, the \emph{hyperaddition} $\boxplus$ is a multivalued operation, that is a function $\boxplus \colon H \times H \to \{\text{nonempty subsets of } H\}$, such that for all $a, b \in H$:
    \begin{itemize}
        \item $a \boxplus b = b \boxplus a$ (commutative),
        \item $0 \boxplus a = \{a\}$ (identity),
        \item there is a unique element $-a$ such that $0 \in a \boxplus (-a)$ (inverses),
        \item $\bigcup\{ a \boxplus t : t \in b \boxplus c\} = \bigcup\{ t \boxplus c : t \in a \boxplus b\}$ (associative)
        \item $a \in b \boxplus c \iff -b \in (-a) \boxplus c$ (reversible)
    \end{itemize}

    Repeated addition is treated monadically, using the power set monad. This means that notationally we will identify elements of $H$ and singletons and repeated hyperaddition is flattened by unions---for example, $a \boxplus (b \boxplus c) = (a \boxplus b) \boxplus c$ means exactly what the associativity axiom says.
\end{definition}

In what follows, we will rarely need to work directly with the axioms above because we will use a common and more familiar subtype of hyperfields called  quotient hyperfields. All the hyperfields used in this paper are quotient hyperfields.

\begin{definition} \label{def:quotient-hyperfield}
    Let $F$ be a field and let $G$ be a subgroup of the group of units $F^*$. The \emph{quotient hyperfield} $F/G$ is the quotient set with the induced multiplication and the hyperaddition defined by
    \[
        aG \boxplus  bG = \{(c +d)G :  c\in aG \text{ and }d\in bG\} .
    \]
    If instead $F$ was a ring, then  $F/G$ is a \emph{quotient hyperring}.
\end{definition}

For simplicity of notation, we will often use the same symbols in $F$ to denote their equivalence classes in $F/G$. Furthermore, if $a \boxplus b$ is a singleton, we will omit the braces which indicate that the sum is a set.

\begin{example} \label{ex:krasner-hyperfield}
    If $F$ is any field with at least $3$ elements, then the hyperfield $\K=F/F^* = \{0, 1\}$ is called the \emph{Krasner hyperfield} after Marc Krasner. It has the following arithmetic:
    \[
        \begin{array}{c|cc}
        \cdot & 0 & 1 \\\hline
            0 & 0 & 0 \\
            1 & 0 & 1
        \end{array}
        \qquad
        \begin{array}{c|cc}
        \boxplus & 0 & 1 \\\hline
               0 & 0 & 1 \\
               1 & 1 & \K
        \end{array}
    \]
    The Krasner hyperfield is the hyperfield analogue of the Boolean semifield which has the same arithmetic except that $1 + 1 = 1$ instead of $\{0, 1\}$.
\end{example}

\begin{example} \label{ex:sign-hyperfield}
    The \emph{sign hyperfield} $\S = \R / \R_{> 0} = \{0, 1, -1\}$ is a quotient of the real numbers by the subgroup of positive real numbers. The arithmetic on $\S$ is given by the following tables.
    \[
        \begin{array}{c|ccc}
        \cdot & 0 & 1 & -1 \\\hline
            0 & 0 & 0 & 0 \\
            1 & 0 & 1 & -1 \\
            -1 & 0 & -1 & 1
        \end{array}
        \qquad
        \begin{array}{c|ccc}
        \boxplus & 0 & 1 & -1 \\\hline
               0 & 0 & 1 & -1 \\
               1 & 1 & 1 & \S \\
               -1 & -1 & \S & -1
        \end{array}
    \]
    This arithmetic encodes rules like ``positive times negative is negative'', ``negative plus negative is negative,'' and ``positive plus negative can be anything.''
\end{example}

\begin{example} \label{ex:tropical-hyperfield}
    If $(F, | \cdot |)$ is a field with an absolute value, then we can take its quotient by the group of elements with absolute value $1$ to create a hyperfield whose underlying set is the image $|F|$. The resulting hyperfield is called a \emph{triangle hyperfield} in the Archimedean case or an \emph{ultratriangle hyperfield} in the non-Archimedean case. Such hyperfields were first described by Viro who showed how they can be used to do computations in tropical geometry~\cite{V2}.

    The most common such hyperfield is where $| \cdot |$ is a non-Archimedean valuation whose image is $\R_{\ge 0}$. For our purposes, it will be more convenient to use the image of the associated valuation $\val(x) = -\log|x|$ (i.e.\ the set $\R \cup \{\infty\}$) as the base set instead. We call this the \emph{tropical hyperfield}, denoted by $\T$, where the arithmetic is given by $a \cdot_\T b = a +_\R b$ and
    \[
        a \boxplus b = \begin{cases}
            \min\{a, b\} & a \neq b, \\
            [a, \infty] & a = b.
        \end{cases} \qedhere
    \]
\end{example}

\subsection{Tropical Extensions}
\begin{example} \label{ex:tropical-extension}
    If $H$ is any hyperfield and $\Gamma$ is an ordered Abelian group, then we can extend $\Gamma$ by $H$ to get a version of the ultratriangle hyperfields of Example~\ref{ex:tropical-hyperfield} ``with coefficients in $H$.''

    Define the set
    \[ H \rtimes \Gamma = \{(h,\gamma) : h \in H^*, \gamma \in \Gamma\} \cup \{\infty\}. \]
    We will also use the notation $ht^\gamma = (h, \gamma)$ to better emphasize the relation between these extensions of hyperfields and extensions of a valued field $K$ to a valuation on $K(t)$ or $K((t))$ or similar (Remark~\ref{rem:hahn-series}).

    Multiplication is defined by $(h_1t^{\gamma_1})(h_2t^{\gamma_2}) = (h_1h_2)t^{\gamma_1 + \gamma_2}$ and the hypersum of $h_1t^{\gamma_1}$ and $h_2t^{\gamma_2}$ is defined as
    \begin{equation} \label{eq:trop-ext-addition}
        \begin{cases}
            h_1t^{\gamma_1} & \gamma_1 < \gamma_2, \\
            h_2t^{\gamma_2} & \gamma_2 < \gamma_1, \\
            (h_1\boxplus h_2)t^{\gamma_1} & \gamma_1 = \gamma_2 \text{ and } 0_H \notin h_1 \boxplus h_2, \\
            (h_1\boxplus h_2)t^{\gamma_1} \cup \{ht^\gamma : h \in H, \gamma > \gamma_1\} & \gamma_1 = \gamma_2 \text{ and } 0_H \in h_1 \boxplus h_2.
        \end{cases}
    \end{equation}
    We call this construction a \emph{tropical extension}.
\end{example}

\begin{remark}
    The hyperfield $\TR = \S \rtimes \R$ is called the \emph{tropical real hyperfield}. This hyperfield and other specific tropical extensions were first described in Viro's work \cite{V2}. The idea of extending ordered groups by a hyperfield appeared in the work of Bowler and Su~\cite{BS}. The tropical real hyperfield has also been used to describe real tropical geometry (e.g.\ \cite{JSY}).    
\end{remark}

\begin{remark}
    In terms of tropical extensions, we also have $\T = \K \rtimes \R$ and, in fact, every ultratriangle hyperfield described in Example~\ref{ex:tropical-hyperfield} is of the form $\K \rtimes \Gamma$ where $\Gamma$ is the image of the non-Archimedean valuation or absolute value.
\end{remark}

\begin{remark} \label{rem:hahn-series}
    If $H = F/G$ as in Definition~\ref{def:quotient-hyperfield}, then we can form the field of Hahn series
    \[
        F[[t^\Gamma]] = \left\{ \sum_{i \in I} a_it^i : a_i \in F \text{ and } I \text{ is a well-ordered subset of } \Gamma \right\}.
    \]
    There is a natural valuation $\nu$ on $F[[t^\Gamma]]$ given by $\nu(\sum_{i \in I} a_it^i) = \min\{i \in I : a_i \ne 0\}$.
    Now define
    \[
        G_0 = \left\{f = \sum_{i \in I} a_it^i \in F[[t^\Gamma]] : \nu(f) = 0_\Gamma \text{ and } a_0 \in G\right\}.
    \]
    The hyperfield $H \rtimes \Gamma$ is isomorphic to $F[[t^\Gamma]]/G_0$.
\end{remark}

\begin{remark}
    Bowler and Su \cite{BS} have a more general construction of a hyperfield from any extension
    \[
    1\to H^* \to G \to \Gamma \to 0
    \]
    of groups in which the conjugation operation of $G$ on $H^*$ extends to an action of $G$ on $H$ via automorphisms of hyperfields. In this context, $H\rtimes \Gamma$ is the hyperfield corresponding to the split extension of $\Gamma$ by $H^*$. Moreover, Bowler and Su show if $H\in \{\K,\S\}$, then all such extensions are split \cite[Theorem~4.17]{BS}. In a paper of the second author (TG), Bowler and Su's construction is described using the language of ordered blueprints \cite{G2}.
\end{remark}

\begin{remark}
    We can make the same definition if $\Gamma$ is an ordered semigroup instead of a group. If $\Gamma$ is not a group, then $H \rtimes \Gamma$ will be a hyperring instead of a hyperfield. This will be useful for us to talk about valuation hyperrings, which take the form $H \rtimes \Gamma_{\ge 0}$ with $\Gamma_{\ge 0} = \{\gamma \in \Gamma : \gamma \ge 0\}$.
\end{remark}

\subsection{Morphisms}

\begin{definition}
\label{def:morphism of hyperfields}
    A morphism between two hyperfields $H_1$ and $H_2$ is a map $\varphi \colon H_1 \to H_2$ such that for all $x, y \in H_1$:
    \begin{itemize}
        \item $\varphi(0) = 0$,
        \item $\varphi(1) = 1$,
        \item $\varphi(xy) = \varphi(x)\varphi(y)$,
        \item $\varphi(x \boxplus y) \subseteq \varphi(x) \boxplus \varphi(y)$.
    \end{itemize}
\end{definition}

\begin{lemma} \label{lem:morphism-over-sum}
    If $\varphi \colon H_1 \to H_2$ is a morphism of hyperfields and we have $A \in \bigboxplus_{i = 1}^n B_i C_i$ in $H_1$, then
    \[
        \varphi(A) \in \bigboxplus_{i = 1}^n \varphi(B_j)\varphi(C_j).
    \]
\end{lemma}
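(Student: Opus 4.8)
The plan is to prove the statement by induction on $n$, the number of terms in the iterated hypersum, reducing the general case to the single defining axiom $\varphi(x \boxplus y) \subseteq \varphi(x) \boxplus \varphi(y)$ together with multiplicativity. First I would record the base case $n = 1$: if $A \in B_1 C_1$, then since $B_1 C_1$ is a singleton (the product of two elements of a hyperfield is a genuine element, not a set), we have $A = B_1 C_1$, and $\varphi(A) = \varphi(B_1 C_1) = \varphi(B_1)\varphi(C_1)$ by multiplicativity, so trivially $\varphi(A) \in \varphi(B_1)\varphi(C_1)$.

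For the inductive step, suppose the claim holds for sums of length $n - 1$ and that $A \in \bigboxplus_{i=1}^n B_i C_i$. Unwinding the monadic/flattened notation for iterated hypersums, this means there is some intermediate element $t \in \bigboxplus_{i=1}^{n-1} B_i C_i$ with $A \in t \boxplus B_n C_n$. The key steps are then: (1) apply the $n-1$ case of the lemma to $t$, obtaining $\varphi(t) \in \bigboxplus_{i=1}^{n-1} \varphi(B_i)\varphi(C_i)$; (2) apply the morphism axiom to the two-term sum $A \in t \boxplus B_n C_n$, giving $\varphi(A) \in \varphi(t) \boxplus \varphi(B_n C_n) = \varphi(t) \boxplus \varphi(B_n)\varphi(C_n)$, using multiplicativity on the last term; (3) substitute the containment from (1) into this, using monotonicity of hyperaddition with respect to set containment (if $S \subseteq S'$ then $u \boxplus S \subseteq u \boxplus S'$, which follows immediately from the monadic definition as a union) to conclude $\varphi(A) \in \varphi(t) \boxplus \bigboxplus_{i=1}^{n-1}\varphi(B_i)\varphi(C_i) \subseteq \bigl(\bigboxplus_{i=1}^{n-1}\varphi(B_i)\varphi(C_i)\bigr) \boxplus \varphi(B_n)\varphi(C_n) = \bigboxplus_{i=1}^{n}\varphi(B_i)\varphi(C_i)$.

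The only slightly delicate point — and the place I would be most careful — is the bookkeeping around the flattened notation for iterated hypersums: one must be sure that "$A \in \bigboxplus_{i=1}^n B_i C_i$" genuinely unpacks (via the power-set monad and the associativity axiom, as spelled out after Definition~1.1) to the existence of an intermediate $t$ as above, and that the monotonicity property $S \subseteq S' \implies u \boxplus S \subseteq u \boxplus S'$ is indeed immediate from $u \boxplus S = \bigcup_{s \in S} u \boxplus s$. Neither is hard, but both deserve an explicit sentence. Everything else is a routine chain of containments, so I do not expect any real obstacle; the lemma is essentially just "functoriality of hyperaddition over finitely many terms," and the proof is a two-line induction once the notation is unwound.
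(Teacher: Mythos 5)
Your proof is correct and is exactly the induction the paper intends; the paper's own proof is simply ``By induction,'' and your write-up is the routine unpacking of that, including the correct base case, the use of multiplicativity and the morphism axiom on a two-term sum, and the monotonicity of $\boxplus$ under set inclusion.
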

\begin{proof}
    By induction.
\end{proof}

\subsection{Valuations} \label{sec:valuations}

\begin{definition}
\label{def:valuations}
Let $H$ be a hyperfield. A valuation on $H$ is a morphism
\[
\nu\colon H\to \K \rtimes \Gamma
\]
of hyperfields for some totally ordered Abelian group $\Gamma$.
\end{definition}

\begin{example}
    \begin{enumerate}[label=(\alph*)]
    \item[]
    \item If $K$ is a field and $\nu\colon K\to \K \rtimes \Gamma$ is a map, then $\nu$ is a valuation in the sense of Definition~\ref{def:valuations} if and only if it is a valuation in the usual sense.
    \item For every hyperfield $H$ and every totally ordered Abelian group $\Gamma$, we obtain a valuation
    \[
    \nu\colon H \rtimes \Gamma\to \K \rtimes \Gamma,\;\; (h, \gamma) \mapsto \gamma .
    \]
    The map
    \[
    \angular \colon H \rtimes \Gamma\to H,\;\; (h, \gamma) \mapsto h
    \]
    is not a morphism of hyperfields in general. We call it the \emph{angular component} map
    \item For every hyperfield $H$ there is a unique morphism of hyperfields
    \[
    \nu_0\colon H\to \K .
    \]
    As $\K=\K \rtimes 0$, this is a valuation with value group $0$, the \emph{trivial valuation}. \qedhere
    \end{enumerate}
\end{example}

\begin{definition} \label{def:angular map for a valued field}
    Let $K$ be a valued field with valuation $\nu\colon K\to \K \rtimes \Gamma$ and residue field $\kappa$. Assume that the valuation $\nu\colon K\to \T$ splits, that is that there exists a morphism of Abelian groups $\psi\colon \Gamma\to K^*$ with $\nu(\psi(\gamma))=t^\gamma$. By abuse of notation, we denote $\psi(\gamma)=t^\gamma$. We define the \emph{angular component} (with respect to the given splitting) $\angular(a)$ of $a\in K^*$ by 
    \[
    \angular(a)=\overline{t^{-\nu(a)}a} \in \kappa ,
    \]
    where the bar indicates that we take the class in the residue field. We also set $\angular(0)=0$. We can then refine the valuation to a morphism of hyperfields
    \[
    \vangular\colon K\to \kappa \rtimes \R,\;\; a\mapsto 
    \begin{cases}
    \angular(a)t^{\nu(a)} & \text{, if }a\neq 0 \\
    0 &\text{, else}.
    \end{cases}
    \]
    By definition, we have $\angular(a)=\angular(\vangular(a))$ for every $a\in K$.
\end{definition}

Recall that a \emph{real closed field} is a field $K$ which is not algebraically closed and whose algebraic closure is $K(i)=K[x]/(x^2+1)$. Every real closed field is an ordered field, where the non-negative elements are precisely the squares. A \emph{valued real closed field} is a real closed field $K$ together with a valuation
\[
\nu\colon K\to \K \rtimes \Gamma
\]
such that $0<a<b$ implies $\nu(a)\geq \nu(b)$.
In this case, the residue field $\kappa$ is real closed again. If $\nu$ is surjective, then it splits \cite[Lemma 2.4]{TropicalSpectrahedra}. Since the angular component is multiplicative, we have 
\[
\sgn(a)=\sgn(\angular(a))
\]
for all $a\in K$. We define the \emph{signed valuation}\label{def:signed valuation} $\vsign$ as the composite
\[
K\xrightarrow{\vangular} \kappa \rtimes \Gamma\xrightarrow{\sgn \rtimes \Gamma} \S \rtimes \Gamma .
\]
By what we just observed, we have $\nu(\vsign(a))=\nu(a)$ and $\angular(\vsign(a))=\sign(a)$ for all $a\in K$.

\subsection{Real fields}
\begin{definition} \label{def:real-field}
    A hyperfield $R$, is called \emph{real} if it is equipped with a morphism $\sgn \colon R \to \S$. We call $\sgn$ a sign map on $R$.
\end{definition}

\begin{remark}
    Definition~\ref{def:real-field} mirrors Definition~\ref{def:valuations} and, in fact, both are special cases of ``valuations'' in the theory of ordered blueprints \cite[Chapter 6]{L}.
\end{remark}

\begin{remark}
    For any ordering $\le$ on a field $R$, there exists a unique morphism $\varphi \colon R \to \S$ such that $\varphi(x) = 1$ if $x > 0$ and $\varphi(x) = -1$ if $x < 0$. In fact, if $R$ is a ring, then morphisms $s \colon R \to \S$ correspond to pairs consisting of a prime ideal $\ker(s)$ and a total order on $R/\ker(s)$ \cite[Proposition 2.12]{ConnesConsani}. This concept can be extended to the language of schemes \cite{jun}.
\end{remark}

\begin{remark}
    Given a morphism from a field $K$ to $\TR$, we get both a total order on $K$ defined by the composition $K \to \TR \xrightarrow{\angular} \S$ and a valuation on $K$ defined by $K \to \TR \xrightarrow{\nu} \T$. The converse does not need to hold. For instance, $\Q$ has a natural total order and various $p$-adic valuations, but these $p$-adic valuations are not compatible with the total order. For a description of what makes a valuation compatible with a total order, we refer the reader to discussions in other papers \cite{G, AGT}.
\end{remark}

\section{Polynomials over hyperfields}
\begin{definition} \label{def:polynomials}
    If $H$ is a hyperfield and $\bm x = x_1,\dots,x_n$ are indeterminants, we define the set  of \emph{polynomials}
    \[ H[\bm x] = \left\{\sum a_{\bm m}{\bm x}^{\bm m} : \bm m \in \Z_{\geq 0}^n, \text{with finite support}\right\}, \] 
    where we use multi-index notation $\bm x^{\bm m} = x_1^{m_1}\cdots x_n^{m_n}$ and the \emph{support} of $f=\sum a_{\bm m} \bm x^{\bm m}$ is the set $\supp(f)=\{\bm m\in \Z_{\geq 0}^n  :  a_{\bm m}\neq 0\}$.
    Addition and multiplication (defined by convolution) give set-valued operations, meaning that $H[\bm x]$ is not, in general, a hyperfield.

    If $f, g, h \in H[\bm x]$ are such that $f \in g \cdot h$, we call this a \emph{factorization} of $f$. Concretely, if the coefficients of $f, g, h$ are $a_{\bm m}, b_{\bm m}, c_{\bm m}$, respectively, this means that for every $\bm m\in  \Z_{\geq 0}^n$ we have,
    \[ 
    a_{\bm m} \in \bigboxplus_{\bm n + \bm p = \bm m} b_{\bm n}c_{\bm p}. 
    \]

    If $f = \sum a_{\bm m} \bm x^{\bm m} \in H[\bm x]$ and $\bm z \in H^n$, then $f(\bm z)$ denotes the evaluation of $f$ at $\bm z$, which is the set $\bigboxplus a_{\bm m} \bm z^m$.
\end{definition}

\begin{remark}
    Because addition in hyperfields is set-valued, when we construct polynomials, both multiplication and addition are set-valued. We will make use of these operations, but we will not try to develop a broader theory of ring-like algebras with multivalued multiplication and addition for two reasons. First, $H[\bm x]$ is generally not ``free'' in the usual understanding of the adjective. Second, there is an existing theory due to Lorscheid of ``ordered blueprints'' which contains both hyperfields and free algebras, and which is a nicer and more natural setting to discuss polynomial algebras over hyperfields \cite{L}, \cite[Appendix]{BL}. See~\cite{G2} for a demonstration of how to rephrase hyperfield notation and multiplicities in terms of ordered blueprints.
\end{remark}

\begin{definition}
    In some examples, it will be convenient to use a grid notation for polynomials in two variables, where we put the coefficient of $x^iy^j$ at position $(i, j)$ and an empty space for a $0$ coefficient. For instance, the grid
    \[
        f = \begin{tikzpicture}[baseline=(current bounding box.center)]
        \matrix[matrix of math nodes]{
        + &   &     \\
        - &   &     \\
        + & - & +   \\
        };
    \end{tikzpicture}
    \]
    denotes the polynomial $+1 - x - y + x^2 + y^2\in \S[x,y]$.
\end{definition}

\begin{definition}
\label{def:fphi}
    Let $\varphi \colon H_1\to H_2$ be a morphism of hyperfields and let $f\in H_1[\bm x]$. We denote by $f^\varphi$ the polynomial in $H_2[\bm x]$ obtained by applying $\varphi$ to all coefficients of $f$.
\end{definition}

\begin{corollary} \label{cor:morphism-factorization}
    If $\varphi \colon H_1 \to H_2$ is a morphism of hyperfields, and $f \in g \cdot h$ in $H_1[\bm x]$, then $f^\varphi \in g^\varphi \cdot h^\varphi$.
\end{corollary}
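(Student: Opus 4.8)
The plan is to reduce the statement to Lemma~\ref{lem:morphism-over-sum} applied one coefficient at a time. Write $f = \sum a_{\bm m}\bm x^{\bm m}$, $g = \sum b_{\bm n}\bm x^{\bm n}$, and $h = \sum c_{\bm p}\bm x^{\bm p}$. By Definition~\ref{def:polynomials}, the hypothesis $f \in g\cdot h$ unwinds to the family of membership relations
\[
a_{\bm m} \in \bigboxplus_{\bm n + \bm p = \bm m} b_{\bm n}c_{\bm p},
\]
indexed by $\bm m \in \Z_{\geq 0}^n$, and the desired conclusion $f^\varphi \in g^\varphi\cdot h^\varphi$ is the analogous family with $a_{\bm m}, b_{\bm n}, c_{\bm p}$ replaced by $\varphi(a_{\bm m}), \varphi(b_{\bm n}), \varphi(c_{\bm p})$. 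So it suffices to fix $\bm m$ and produce the $\varphi$-image relation from the original one.

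First I would observe that, because $g$ and $h$ have finite support, for each fixed $\bm m$ only finitely many pairs $(\bm n,\bm p)$ with $\bm n + \bm p = \bm m$ contribute a product with both factors nonzero; enumerating these as $(\bm n_1,\bm p_1),\dots,(\bm n_k,\bm p_k)$ and discarding the remaining summands (legitimate since $0\boxplus a = \{a\}$), the relation becomes $a_{\bm m} \in \bigboxplus_{i=1}^k b_{\bm n_i}c_{\bm p_i}$, which is exactly the hypothesis required by Lemma~\ref{lem:morphism-over-sum}.

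Then I would apply Lemma~\ref{lem:morphism-over-sum} to obtain $\varphi(a_{\bm m}) \in \bigboxplus_{i=1}^k \varphi(b_{\bm n_i})\varphi(c_{\bm p_i})$, and re-expand (re-inserting the discarded zero terms, which changes nothing since $\varphi(0)=0$) to
\[
\varphi(a_{\bm m}) \in \bigboxplus_{\bm n + \bm p = \bm m} \varphi(b_{\bm n})\varphi(c_{\bm p}).
\]
By Definition~\ref{def:fphi}, $\varphi(a_{\bm m})$, $\varphi(b_{\bm n})$, $\varphi(c_{\bm p})$ are precisely the coefficients of $f^\varphi$, $g^\varphi$, $h^\varphi$, so this is the $\bm m$-th defining relation for $f^\varphi \in g^\varphi\cdot h^\varphi$; since $\bm m$ was arbitrary, we are done.

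There is essentially no obstacle here; the only points needing care are bookkeeping: invoking the finite-support hypothesis so that Lemma~\ref{lem:morphism-over-sum} (stated for finite hypersums) genuinely applies, and checking that dropping and re-adding zero summands is harmless. Indeed one could phrase the whole argument as a one-line consequence of Lemma~\ref{lem:morphism-over-sum} with the (finite, after restricting to $\supp(g)\times\supp(h)$) index set $\{(\bm n,\bm p) : \bm n+\bm p=\bm m\}$.
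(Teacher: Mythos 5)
Your proof is correct and follows exactly the route the paper takes: the paper's proof is the one-line ``This follows directly from Lemma~\ref{lem:morphism-over-sum},'' and you have simply spelled out the coefficient-by-coefficient unwinding that makes that reduction precise. One small observation: the finiteness you invoke is automatic even without appealing to finite support, since for fixed $\bm m \in \Z_{\geq 0}^n$ there are only finitely many decompositions $\bm m = \bm n + \bm p$ with $\bm n, \bm p \in \Z_{\geq 0}^n$; but this is a harmless overcaution, not an error.
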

\begin{proof}
    This follows directly from Lemma~\ref{lem:morphism-over-sum}.
\end{proof}

\begin{definition} \label{def:poly-morphism}
    Given two sets of polynomials $H_1[\bm x]$ and $H_2[\bm x]$, by a \emph{diagonal transformation}, $\Phi \colon H_1[\bm x] \to H_2[\bm x]$, we mean a function which is a composite of a map as in Definition~\ref{def:fphi} and a \emph{diagonal monomial substitution} of the form $\bm x \mapsto \bm a \bm x^{\bm k} = (a_1x_1^{k_1}, \dots, a_nx_n^{k_n})$ for some $\bm a \in H_2^n$ and $\bm k \in (\Z_{> 0})^n$.
\end{definition}

\begin{remark}
    More general monomial substitutions do not necessarily lead to element-to-element maps. For instance, substituting $y \mapsto x$ in $x + y$ yields $(1 \boxplus 1)x$. In the next lemma, we could also consider substitutions coming from injective semigroup homomorphisms $\N^n \to \N^n$ instead of just a diagonal ones but since the only substitutions we use have the form $\bm x \mapsto \bm {ax}$ or maybe relabelling some variables, it just makes for easier notation to only consider diagonal substitutions.
\end{remark}

\begin{lemma} \label{lem:mon-transf-factorization}
    If $f \in g \cdot h$ and $(x_i) \mapsto (a_ix_i^{k_i})$ is a diagonal monomial transformation, then $f(\bm a\bm x^{\bm k}) \in g(\bm a\bm x^{\bm k}) \cdot h(\bm a\bm x^{\bm k})$.
\end{lemma}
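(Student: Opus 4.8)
The plan is to reduce the statement about polynomial factorizations to the defining coefficient-wise membership condition of a factorization (Definition~\ref{def:polynomials}) and then invoke Lemma~\ref{lem:morphism-over-sum} on each coefficient separately. First I would record exactly what needs to be shown: writing $f = \sum a_{\bm m}\bm x^{\bm m}$, $g = \sum b_{\bm n}\bm x^{\bm n}$, $h = \sum c_{\bm p}\bm x^{\bm p}$, the hypothesis $f \in g\cdot h$ unwinds to
\[
a_{\bm m} \in \bigboxplus_{\bm n + \bm p = \bm m} b_{\bm n}c_{\bm p} \qquad \text{for all } \bm m \in \Z_{\ge 0}^n,
\]
and the goal is the analogous family of memberships for the coefficients of $f(\bm a\bm x^{\bm k})$, $g(\bm a\bm x^{\bm k})$, $h(\bm a\bm x^{\bm k})$.

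The key observation is bookkeeping: substituting $x_i \mapsto a_i x_i^{k_i}$ sends the monomial $\bm x^{\bm m}$ to $\bm a^{\bm m}\bm x^{\bm k\cdot\bm m}$ (with $\bm k \cdot \bm m$ denoting the coordinate-wise product $(k_1 m_1, \dots, k_n m_n)$), so the coefficient of $\bm x^{\bm q}$ in $f(\bm a\bm x^{\bm k})$ is $a_{\bm m}\bm a^{\bm m}$ if $\bm q = \bm k\cdot\bm m$ for some (necessarily unique, since the $k_i$ are positive) $\bm m$, and $0$ otherwise. The same description applies to $g$ and $h$. Hence, fixing $\bm q$ of the form $\bm k\cdot\bm m$, the coefficient membership we must verify is
\[
a_{\bm m}\bm a^{\bm m} \;\in\; \bigboxplus_{\bm n + \bm p = \bm m}\bigl(b_{\bm n}\bm a^{\bm n}\bigr)\bigl(c_{\bm p}\bm a^{\bm p}\bigr),
\]
where I have used that $\bm n + \bm p = \bm m$ forces $(\bm k\cdot\bm n) + (\bm k\cdot\bm p) = \bm k\cdot\bm m = \bm q$ and conversely, so the index set of the hypersum is unchanged. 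Since $\bm a^{\bm n}\bm a^{\bm p} = \bm a^{\bm m}$ in $H$ and multiplication is single-valued, the right-hand side equals $\bm a^{\bm m}\bigl(\bigboxplus_{\bm n+\bm p=\bm m} b_{\bm n}c_{\bm p}\bigr)$; the desired membership then follows by multiplying the hypothesis $a_{\bm m}\in\bigboxplus b_{\bm n}c_{\bm p}$ through by the fixed element $\bm a^{\bm m}$ (multiplication by a fixed element of a hyperfield respects hypersums, which is the $n$-ary distributivity packaged in Lemma~\ref{lem:morphism-over-sum}, or can be seen directly from distributivity). For $\bm q$ not of the form $\bm k\cdot\bm m$ all three coefficients are $0$ and the membership $0 \in 0$ is trivial.

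The only genuinely fiddly point — and the one I would be most careful about — is the indexing claim that the substitution is injective on monomials and that it matches up the summation index sets on both sides, i.e.\ that $\bm n + \bm p = \bm m \iff (\bm k\cdot\bm n)+(\bm k\cdot\bm p) = \bm k\cdot\bm m$ together with $\bm k\cdot\bm n, \bm k\cdot\bm p$ both lying in the image of the substitution; this is where positivity of the exponents $k_i$ is used and is exactly the content of the remark preceding Definition~\ref{def:poly-morphism} about general monomial substitutions failing to be element-to-element. Everything else is the routine distributivity bookkeeping above, so the proof is short: unwind the definition, push the scalar $\bm a^{\bm m}$ through the hypersum, done.
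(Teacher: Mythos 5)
Your proof is correct and follows essentially the same route as the paper's: unwind $f\in g\cdot h$ to the coefficient-wise membership condition, observe that the diagonal substitution just multiplies each coefficient by a fixed scalar whose exponents are additive over the factorization, and push that scalar through the hypersum. You are more careful than the paper's two-line proof about the index-matching point (that $k_i>0$ makes the substitution injective on monomials, so a decomposition of $\bm q$ contributes only when both parts lie in the image), and incidentally you have the scalar correct as $\bm a^{\bm m}$ where the paper's displayed formula has a harmless typo $\bm a^{\bm m\bm k}$.
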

\begin{proof}
    Let $A_{\bm m}, B_{\bm n}, C_{\bm p}$ be the coefficients of $f, g, h$, respectively. So we have
    \[
        A_{\bm m} \in \bigboxplus_{\bm m = \bm n + \bm p} B_{\bm n} C_{\bm p}
    \]
    for all $\bm m\in \Z_{\geq 0}$.
    This implies that
    \[
        A_{\bm m} \bm a^{\bm m \bm k} \in \bigboxplus_{\bm m = \bm n + \bm p} B_{\bm n} C_{\bm p} \bm a^{\bm n \bm k + \bm p \bm k}
    \]
    which is the condition that $f(\bm a\bm x^{\bm k}) \in g(\bm a\bm x^{\bm k}) \cdot h(\bm a\bm x^{\bm k})$.
\end{proof}

Combining Corollary \ref{cor:morphism-factorization} and Lemma \ref{lem:mon-transf-factorization}, we obtain the following:

\begin{corollary}\label{cor:morphism-mult}
    If $\Phi \colon H_1[\bm x] \to H_2[\bm x]$ is a diagonal transformation and $f\in g\cdot h\in H_1[\bm x]$, then $\Phi(f)\in \Phi(g)\cdot \Phi(h)$.
\end{corollary}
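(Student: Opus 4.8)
The plan is to unwind the definition of a diagonal transformation and chain together the two results just proved. By Definition~\ref{def:poly-morphism}, the map $\Phi$ factors as $\Phi = \sigma \circ (-)^\varphi$, where $(-)^\varphi \colon H_1[\bm x] \to H_2[\bm x]$ applies a hyperfield morphism $\varphi$ to all coefficients (as in Definition~\ref{def:fphi}) and $\sigma \colon H_2[\bm x] \to H_2[\bm x]$ is the diagonal monomial substitution $\bm x \mapsto \bm a \bm x^{\bm k}$ for some $\bm a \in H_2^n$ and $\bm k \in (\Z_{>0})^n$. Note that the substitution must be performed \emph{after} the coefficient map, since its parameters $\bm a$ live in $H_2$; so $\Phi(f) = f^\varphi(\bm a \bm x^{\bm k})$ for every $f \in H_1[\bm x]$, and likewise for $g$ and $h$.

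The steps are then immediate. Starting from the given factorization $f \in g \cdot h$ in $H_1[\bm x]$, I first apply Corollary~\ref{cor:morphism-factorization} to conclude $f^\varphi \in g^\varphi \cdot h^\varphi$ in $H_2[\bm x]$. Next I apply Lemma~\ref{lem:mon-transf-factorization} to this factorization, which gives $f^\varphi(\bm a \bm x^{\bm k}) \in g^\varphi(\bm a \bm x^{\bm k}) \cdot h^\varphi(\bm a \bm x^{\bm k})$, i.e.\ $\Phi(f) \in \Phi(g) \cdot \Phi(h)$, as required.

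There is no genuine obstacle here: both ingredients are already established, and the only point requiring attention is that the two constituents of $\Phi$ are composed in the order that keeps the monomial substitution well-defined over $H_2$. (If one instead wished to allow the substitution to act first over $H_1$, the identical argument applies with the two steps interchanged, invoking Lemma~\ref{lem:mon-transf-factorization} before Corollary~\ref{cor:morphism-factorization}.)
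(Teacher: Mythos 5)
Your proof is correct and follows exactly the route the paper itself indicates: the corollary is stated immediately after the sentence ``Combining Corollary~\ref{cor:morphism-factorization} and Lemma~\ref{lem:mon-transf-factorization}, we obtain the following,'' and you have simply written out that combination explicitly, including the (correct) observation about the order in which the coefficient map and monomial substitution must be composed.
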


\subsection{Newton Polygons}
A useful tool to understand the  combinatorics of polynomials over valued (hyper)fields is the \emph{Newton polytope}.
\begin{definition}
    Let $f = \sum a_{\bm m} \bm x^{\bm m} \in H[\bm x]$. We call the convex hull of $\supp(f) \subset \R^n$ the \emph{Newton polytope}, denoted $\Newt(f)$. We say that $f$ is \emph{dense} if $\supp(f)=\Newt(f)\cap\Z^m$. When $H$ has a valuation $v \colon H \to \T$, we furthermore have a subdivision of $\Newt(f)$, constructed as follows.
    
    Take the set of points
    \[ \mathcal S = \{(m, v(a_{\bm m})) \in \Z^m \times \R : \bm m \in \supp(f)\}. \]
    The \emph{lower convex hull} of $\mathcal{S}$ is the intersection of all ``lower-halfspaces'' containing $\mathcal{S}$. Here, a lower-halfspace is a halfspace cut out by a ``lower-inequality'': $\{p \in \R^{m + 1} : \langle u, p \rangle + c \ge 0\}$ for some $u \in \R^m \times \R_{\ge 0}$ and $c \in \R$.
    This lower convex hull is sometimes called the \emph{extended Newton polytope} of $f$.
    
    By projecting the faces of this extended Newton polytope into the first $m$ coordinates, we obtain a subdivision of $\Newt(f)$. For polynomials over valued hyperfields, $\Newt(f)$ refers to both the polytope and the subdivision, where appropriate.
\end{definition}

\begin{example}
    Consider the polynomial $1 + x + y + x^2 + xy + 1y^2 \in \T[x,y]$. The edges and vertices of the extended Newton polytope are drawn in Figure~\ref{fig:newton-subdivision} in greyscale and the associated subdivision is drawn beneath it in \textcolor{asypurple}{purple}.
    \begin{figure}[htbp]
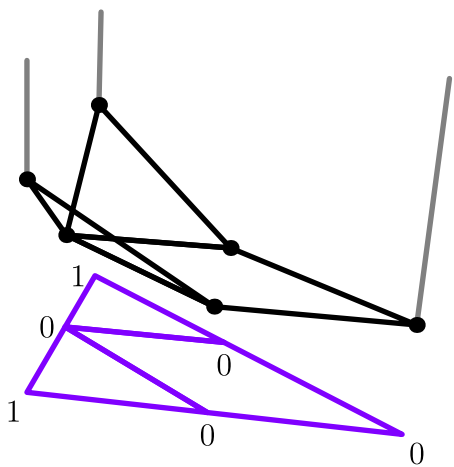

        \centering
        \begin{asy}
            import three;
            currentprojection = perspective(1, -4, 2.5);
            size(6cm, 6cm, keepAspect=false);
            defaultpen(linewidth(2));
            dotfactor = 3;
            settings.render = 0;
            
            draw((0,0,1) -- (0,0,2), grey);
            draw((0,2,1) -- (0,2,2), grey);
            draw((2,0,0) -- (2,0,2), grey);
            dot((0,0,1));
            dot((0,1,0));
            dot((0,2,1));
            dot((1,0,0));
            dot((1,1,0));
            dot((2,0,0)); 

            draw((0, 0, 1) -- (0, 1, 0) -- (1, 0, 0) -- cycle);
            draw((0, 1, 0) -- (1, 1, 0) -- (2, 0, 0) -- (1, 0, 0) -- cycle);
            draw((0, 1, 0) -- (1, 1, 0) -- (0, 2, 1) -- cycle);

            draw((0, 0, -1) -- (0, 1, -1) -- (1, 0, -1) -- cycle, purple);
            draw((0, 1, -1) -- (1, 1, -1) -- (2, 0, -1) -- (1, 0, -1) -- cycle, purple);
            draw((0, 1, -1) -- (1, 1, -1) -- (0, 2, -1) -- cycle, purple);

            label("$1$", (0, 0, -1), SW);
            label("$0$", (0, 1, -1), W);
            label("$1$", (0, 2, -1), W);
            label("$0$", (1, 0, -1), S);
            label("$0$", (1, 1, -1), S);
            label("$0$", (2, 0, -1), SE);
        \end{asy}
        \caption{Extended Newton polytope of the polynomial $f = 1 + x + y + x^2 + xy + 1y^2 \in \T[x,y]$ and \textcolor{asypurple}{associated subdivision} of $\Newt(f)$. Numbers indicate the valuation of the corresponding coefficient.}
        \label{fig:newton-subdivision}
    \end{figure}
\end{example}

\begin{definition}
    The Newton polytope of $1 + \sum_{i = 1}^n x_i$ is the standard $(n + 1)$-simplex, denoted $\Delta_{n + 1}$. The Newton polytope of $1 + \sum_{i = 1}^n x_i^d$ is denoted $d\Delta_{n + 1}$ and is the $d$-fold Minkowski sum of $\Delta_{n + 1}$. Concretely,
    \[
        d\Delta_{n + 1} = \left\{ \bm a \in \R_{\ge 0}^n \, : \, \sum a_i \le d \right\}.
    \]

    Given a polynomial $f \in H[\bm x]$, we say that $f$ has \emph{Newton-degree} $d$ if $\Newt(f) = d\Delta_{n + 1}$.
\end{definition}

\subsection{Polynomial Functions}

\begin{definition} \label{def:polynomial-function}
Every polynomial $f = \sum a_{\bm m} \bm x^{\bm m} \in \T[x_1,\dots,x_n]$ determines a tropical polynomial function $\PF_f$, given by
\[
\PF_f\colon \R^n\to \R ,\;\; \bm x\mapsto \min\{ a_{\bm m}+ \langle \bm m,x\rangle :  \bm m\in \Z_{\geq 0}^n\} .
\]
\end{definition}

Tropical polynomial functions are piecewise linear with integral slopes. We say that a monomial $a_{\bm m} \bm x^{\bm m}$ of $f$ is \emph{essential} if $\PF_f(\bm x)=a_{\bm m} + \langle \bm m, \bm x \rangle$ on some open subset of $\R^n$. In general, the polynomial $f$ is not determined by $\PF_f$, but all of its essential monomials are. More precisely, if $f^{\mathrm{ess}}$ denotes the sum of the essential monomials of $f$, then $\PF_f=\PF_{f^{\mathrm{ess}}}$. It follows that for two polynomials $f,g\in \T[\bm x]$ we have $\PF_f=\PF_g$ if and only if $f^{\mathrm{ess}}=g^{\mathrm{ess}}$. We say that $f$ is \emph{strictly convex} if $f=f^{\mathrm{ess}}$. Note that we always have $\Newt(f)=\Newt(f^{\mathrm{ess}})$.

\begin{remark} \label{rem:hyperfield-v-semifield}
    Polynomial functions use arithmetic from the tropical \emph{semifield} $\bar \R$ where $a \oplus b$ is the single element $\min\{a, b\}$.
    In Lorscheid's theory of ordered blueprints, there is a functor which relates the hyperfield $\T$ with the semifield $\bar \R$. Consider the order $\leqslant$ on $\T$, defined by $a \leqslant b + c$ if $a \in b \boxplus c$. If we add the relation $1 + 1 \leqslant 1$, we obtain $\bar\R$.
\end{remark}

\begin{lemma}
    \label{lem:additivity of PL functions}
    Let $f,g\in \T[\bm x]$ be polynomials and let $h\in f\cdot g$. Then we have
    \[
    \PF_h=\PF_f+\PF_g.
    \]
\end{lemma}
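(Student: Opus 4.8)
The statement to prove is that for $f, g \in \T[\bm x]$ and $h \in f \cdot g$, one has $\PF_h = \PF_f + \PF_g$ as functions on $\R^n$. The strategy is to evaluate both sides pointwise: fix $\bm x \in \R^n$ and show $\PF_h(\bm x) = \PF_f(\bm x) + \PF_g(\bm x)$. Write $f = \sum_{\bm n} b_{\bm n} \bm x^{\bm n}$, $g = \sum_{\bm p} c_{\bm p} \bm x^{\bm p}$, and $h = \sum_{\bm m} a_{\bm m} \bm x^{\bm m}$, where by Definition~\ref{def:polynomials} the factorization $h \in f \cdot g$ means $a_{\bm m} \in \bigboxplus_{\bm n + \bm p = \bm m} b_{\bm n} c_{\bm p}$ for every $\bm m$. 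Recall that in $\T$, multiplication is ordinary addition of reals (with $\infty$ absorbing), and for the order $\leqslant$ from Remark~\ref{rem:hyperfield-v-semifield}, membership $a \in \bigboxplus_i d_i$ implies $a \geqslant \min_i d_i$ in $\bar\R$, i.e.\ $a \le \min_i d_i$ as real numbers — more precisely $a \ge \min_i d_i$ can fail but we always get that $a$ lies in the hypersum, which for $\T$ forces $a \geq \min_i d_i$ unless the minimum is attained uniquely, in which case $a = \min_i d_i$. I should state this elementary fact about $\T$ cleanly as a sublemma: if $a \in \bigboxplus_{i=1}^k d_i$ in $\T$, then $a \ge \min_i d_i$, with equality whenever the minimum is attained by exactly one index.

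First I would establish the inequality $\PF_h(\bm x) \ge \PF_f(\bm x) + \PF_g(\bm x)$. For each $\bm m$, the sublemma gives $a_{\bm m} + \langle \bm m, \bm x\rangle \ge \min_{\bm n + \bm p = \bm m}\big(b_{\bm n} + c_{\bm p} + \langle \bm n, \bm x\rangle + \langle \bm p, \bm x\rangle\big) \ge \PF_f(\bm x) + \PF_g(\bm x)$, since each summand on the right is at least $(b_{\bm n} + \langle \bm n,\bm x\rangle) + (c_{\bm p} + \langle \bm p,\bm x\rangle) \ge \PF_f(\bm x) + \PF_g(\bm x)$. Taking the minimum over $\bm m$ yields the desired inequality.

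For the reverse inequality $\PF_h(\bm x) \le \PF_f(\bm x) + \PF_g(\bm x)$, I would pick indices $\bm n_0, \bm p_0$ achieving the minima defining $\PF_f(\bm x)$ and $\PF_g(\bm x)$ respectively, and among all such choices take the ones for which $\bm n_0$ (say) is lexicographically extremal, so that the pair $(\bm n_0, \bm p_0)$ is the \emph{unique} minimizer of $\bm n + \bm p \mapsto b_{\bm n} + c_{\bm p} + \langle \bm n + \bm p, \bm x\rangle$ among decompositions of $\bm m_0 := \bm n_0 + \bm p_0$ — this is the standard ``generic tie-breaking'' argument. Then the sublemma's equality case applied to $a_{\bm m_0} \in \bigboxplus_{\bm n + \bm p = \bm m_0} b_{\bm n} c_{\bm p}$ forces $a_{\bm m_0} = b_{\bm n_0} + c_{\bm p_0}$, hence $\PF_h(\bm x) \le a_{\bm m_0} + \langle \bm m_0, \bm x\rangle = \PF_f(\bm x) + \PF_g(\bm x)$. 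Combining the two inequalities finishes the proof.

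**Main obstacle.** The delicate point is the reverse inequality: one must exhibit a monomial of $h$ whose value at $\bm x$ equals $\PF_f(\bm x) + \PF_g(\bm x)$, and for that one needs the hypersum in $\T$ to collapse to a single value, which only happens when the minimum among the $b_{\bm n} c_{\bm p}$ is attained uniquely. So the crux is the tie-breaking: choosing the decomposition $\bm m_0 = \bm n_0 + \bm p_0$ so that no other decomposition of $\bm m_0$ achieves the same minimal value $b_{\bm n_0} + c_{\bm p_0} + \langle \bm m_0, \bm x\rangle$. Ordering the minimizing $\bm n$'s lexicographically (and noting $\bm p = \bm m_0 - \bm n$ is then determined) does the job; I should be careful to verify that the lexicographically smallest minimizer of $\PF_f$ really does force uniqueness once $\bm m_0$ is fixed. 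Everything else is routine.
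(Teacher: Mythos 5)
Your plan---establish $\PF_h(\bm x)\ge\PF_f(\bm x)+\PF_g(\bm x)$ from the sublemma, then exhibit a specific monomial of $h$ achieving $\PF_f(\bm x)+\PF_g(\bm x)$ by forcing the hypersum to collapse---is a legitimate alternative to the paper's proof. The paper instead fixes a \emph{generic} $\bm w$ in the dense open set where $\PF_f$ and $\PF_g$ are each minimized by a \emph{unique} monomial (so no tie-breaking is needed, and the hypersum at $\bm m_1+\bm m_2$ collapses automatically), proves the equality there, and then extends to all of $\R^n$ by continuity of tropical polynomial functions. You trade that continuity step for a tie-breaking argument that works at every point directly; both routes are fine in principle.

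But the tie-breaking as you have sketched it has a genuine hole. Taking $\bm n_0$ lexicographically extremal among minimizers of $\PF_f(\bm x)$ while leaving $\bm p_0$ an arbitrary minimizer of $\PF_g(\bm x)$ does \emph{not} make $(\bm n_0,\bm p_0)$ the unique minimizing split of $\bm m_0=\bm n_0+\bm p_0$. One-variable counterexample: if the minimizers of $\PF_f(\bm x)$ are the exponents $\{0,1\}$ and those of $\PF_g(\bm x)$ are $\{2,3\}$, then $\bm n_0=0$, but choosing $\bm p_0=3$ gives $\bm m_0=3$, and $(1,2)$ ties with $(0,3)$, so the hypersum determining $a_3$ need not be a singleton. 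The claim you flagged as needing verification---that lex-minimality of $\bm n_0$ alone forces uniqueness ``once $\bm m_0$ is fixed''---is false, because $\bm m_0$ depends on your uncontrolled choice of $\bm p_0$. The repair is small: take \emph{both} $\bm n_0$ and $\bm p_0$ lexicographically minimal in their respective minimizer sets. Then for any other minimizing pair $(\bm n,\bm p)$ with $\bm n+\bm p=\bm n_0+\bm p_0$: if $\bm n=\bm n_0$ then $\bm p=\bm p_0$; otherwise $\bm n>_{\mathrm{lex}}\bm n_0$, hence $\bm p=\bm m_0-\bm n<_{\mathrm{lex}}\bm m_0-\bm n_0=\bm p_0$, contradicting the lex-minimality of $\bm p_0$. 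With that fix your argument is correct.
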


\begin{proof}
    Let $a_{\bm m}$, $b_{\bm m}$ and $c_ {\bm m}$ denote the coefficients of $f$, $g$, and $h$, respectively.
    Let $\bm w \in \R^n$ be generic; more precisely, we require that $\bm w$ is contained in the dense open subset of $\R^n$ where there exist unique $\bm m_1, \bm m_2\in \Z_{\geq 0}^n$ such that $\PF_f(\bm w)= a_{\bm m_1}+\langle \bm m_1, \bm w\rangle$ and $\PF_g(\bm w)= b_{\bm m_2}+\langle \bm m_2 , \bm w\rangle$. In particular, the minimum 
    \[
    \min\{a_{\bm m}+b_{\bm m'}+ \langle \bm m + \bm m', \bm w\rangle  :  \bm m, \bm m'\in \Z_{\geq 0}^n\}
    \]
    is attained exactly once, namely for $\bm m = \bm m_1$ and $\bm m' = \bm m_2$, and equal to $\PF_f(\bm w)+\PF_g(\bm w)$. Since for $k\in \Z_{\geq 0}$ we have $c_k\geq \min\{ a_{\bm m}+b_{\bm m'}  :  \bm m+\bm m'= k\}$, with equality if the minimum is attained exactly once, it follows that $c_{\bm m_1+\bm m_2}=a_{\bm m_1}+b_{\bm m_2}$ and that 
    \[
    \PF_h(\bm w)=c_{\bm m_1+\bm m_2}+\langle \bm m_1+\bm m_2 ,\bm w \rangle= \PF_f(\bm w)+\PF_g(\bm w) .
    \]
    By continuity of polynomial functions, this implies that $\PF_h=\PF_f+\PF_g$ on all of $\R^n$.
\end{proof}

\subsection{Initial forms}
Let $H$ be a hyperfield and $f\in (H \rtimes \R)[\bm x]$ and let $\bm w \in \R^n$. Consider the sub-hyperring $H \rtimes \R_{\ge 0} = \nu^{-1}(\R_{\geq 0} \cup \{\infty\})$ analogous to the valuation subring in a valued field. By definition of polynomial functions, we have 
\[
\widetilde f\coloneqq t^{-\PF_{f^\nu}(\bm w)}f(t^{w_1}x_1,\ldots t^{w_n}x_n) \in (H \rtimes \R_{\ge 0})[\bm x]
\]
and the minimum of the valuations of the coefficients of $\widetilde f$ is $0$. Denote
\[
r\colon H \rtimes \R_{\ge 0} \to H, \;\; (h, l) \mapsto
\begin{cases}
    0 & \text{if }  l>0, \\
    h & \text{else.}
\end{cases}
\]
One checks that $r$ is a morphism of hyperrings. The \emph{initial form} $\initial_{\bm w}(f)$ is defined as the image of $\widetilde f$ under $r$, that is
\[
\initial_{\bm w}(f)= (\widetilde f)^r .
\]

\begin{lemma}
\label{lem:taking initial forms respects products}
    Let $f,g\in (H \rtimes \R)[\bm x]$, let $\bm w \in \R^n$, and let $h\in f\cdot g$.  Then we have
    \[
        \initial_{\bm w}(\bm h)\in \initial_{\bm w}(f)\cdot \initial_{\bm w}(g) .
    \]
\end{lemma}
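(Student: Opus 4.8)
The plan is to exhibit the passage $f \mapsto \initial_{\bm w}(f)$ as a composite of three operations, each of which is already known to carry factorizations to factorizations: the diagonal monomial substitution $x_i \mapsto t^{w_i}x_i$ handled by Lemma~\ref{lem:mon-transf-factorization}, multiplication of all coefficients by a unit scalar of $H \rtimes \R$, and post-composition with the hyperring morphism $r$.

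First I would invoke Lemma~\ref{lem:mon-transf-factorization} for the substitution $x_i \mapsto t^{w_i} x_i$ (a diagonal monomial transformation with $\bm k = (1,\dots,1)$), which turns $h \in f\cdot g$ into $h(t^{w_1}x_1,\dots,t^{w_n}x_n) \in f(t^{w_1}x_1,\dots,t^{w_n}x_n)\cdot g(t^{w_1}x_1,\dots,t^{w_n}x_n)$. Next I need that the normalizing exponents are additive, i.e.\ $\PF_{h^\nu}(\bm w) = \PF_{f^\nu}(\bm w) + \PF_{g^\nu}(\bm w)$. For this I would apply Corollary~\ref{cor:morphism-factorization} to the valuation $\nu\colon H\rtimes\R \to \K\rtimes\R = \T$, obtaining $h^\nu \in f^\nu\cdot g^\nu$ in $\T[\bm x]$, and then Lemma~\ref{lem:additivity of PL functions}, which gives $\PF_{h^\nu} = \PF_{f^\nu}+\PF_{g^\nu}$ as functions on $\R^n$; evaluating at $\bm w$ yields the additivity. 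Writing $t^{-\PF_{h^\nu}(\bm w)} = t^{-\PF_{f^\nu}(\bm w)}\cdot t^{-\PF_{g^\nu}(\bm w)}$ and distributing one factor into each of the two polynomials (scaling all coefficients of one factor in a factorization by a unit again yields a factorization, by the distributive law $\lambda(b\boxplus c)=\lambda b\boxplus \lambda c$), I obtain $\widetilde h \in \widetilde f\cdot\widetilde g$. That $\widetilde f,\widetilde g,\widetilde h$ really lie in $(H\rtimes\R_{\ge 0})[\bm x]$ is exactly the observation recorded just before the definition of $\initial_{\bm w}$: after normalization the minimum of the valuations of the coefficients is $0$.

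Finally I would apply $r\colon H\rtimes\R_{\ge 0}\to H$ coefficientwise. Although Lemma~\ref{lem:morphism-over-sum} and Corollary~\ref{cor:morphism-factorization} are phrased for morphisms of hyperfields, their proofs use only that the map preserves products and sends a hypersum into the corresponding hypersum, so they apply verbatim to the hyperring morphism $r$. Hence $(\widetilde h)^r \in (\widetilde f)^r\cdot(\widetilde g)^r$, which is precisely $\initial_{\bm w}(h)\in\initial_{\bm w}(f)\cdot\initial_{\bm w}(g)$.

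The whole argument is really just bookkeeping — chaining together maps already shown to respect factorizations — so I do not expect a genuine obstacle. The two points that call for a moment's care are (i) the additivity $\PF_{h^\nu}(\bm w)=\PF_{f^\nu}(\bm w)+\PF_{g^\nu}(\bm w)$, which is what lets the single normalizing scalar split correctly across the two factors and is supplied by Lemma~\ref{lem:additivity of PL functions}, and (ii) the fact that $H\rtimes\R_{\ge 0}$ is only a hyperring, so one must note (as above) that the factorization-preservation lemmas remain valid for the hyperring morphism $r$.
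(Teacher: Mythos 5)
Your proof is correct and follows essentially the same route as the paper's: invoke Lemma~\ref{lem:additivity of PL functions} to split the normalizing exponent across the two factors, observe that the rescaled-and-substituted polynomials still satisfy the factorization relation, and then apply the hyperring morphism $r$ coefficientwise. The paper compresses the middle step into ``it follows that''; you have simply filled in the bookkeeping (the appeal to Lemma~\ref{lem:mon-transf-factorization} for the substitution $x_i\mapsto t^{w_i}x_i$, to Corollary~\ref{cor:morphism-factorization} applied to $\nu$ so that Lemma~\ref{lem:additivity of PL functions} is applicable, to distributivity for the scalar split, and the remark that Lemma~\ref{lem:morphism-over-sum} works just as well for the hyperring morphism $r$), all of which is sound.
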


\begin{proof}
   By Lemma~\ref{lem:additivity of PL functions} we have $\PF_{h^\nu}(\bm w)=\PF_{f^\nu}(\bm w)+\PF_{g^\nu}(\bm w)$. It follows that 
   \begin{multline*}
        t^{-\PF_{h^\nu}(\bm w)}h(t^{w_1}x_1,\ldots, t^{w_n}x_n) \\
        \in \left(t^{-\PF_{f^\nu}(\bm w)}f(t^{w_1}x_1,\ldots, t^{w_n}x_n)\right)
            \left(t^{-\PF_{g^\nu}(\bm w)}g(t^{w_1}x_1,\ldots, t^{w_n}x_n)\right).
   \end{multline*}
   Applying the hyperring morphism $H \rtimes \R_{\ge 0} \to H$ to both sides of ``$\in$'' finishes the proof.
\end{proof}

We can then define the initial form of $f\in K[\bm x]$ at $\bm w \in \R^n$ by
\[
\initial_{\bm w}(f)=\initial_{\bm w}(f^\vangular) .
\]
This recovers the definition from the literature \cite[Chapter~2.4]{MS}.

\subsection{Tropical Hypersurfaces}

\begin{definition} \label{def:hyperfield-hypersurface}
    Let $f \in \T[\bm x]$ be a tropical polynomial. Its associated \emph{bend locus}, \emph{zero set}, \emph{variety} or \emph{hypersurface} is the set $V(f) = \{\bm b \in \R^n : f(\bm b) \ni \infty \}$.
\end{definition}

\begin{remark}
    Over a general hyperfield, one can also consider the zero set of a polynomial $f$ as $\{a \in H^n : f(a) \ni 0_H\}$. For our purposes, we defined the zero set as a subset of $\R^n = (\T^*)^n$ instead of $\T^n$ as that matches the more familiar definition of a tropical hypersurface~\cite{MS}.

    Such ``equations over hyperfields'' were first studied by Viro~\cite{V2}. For the tropical reals, Jell-Scheiderer-Yu reworded semialgebraic inequalities in terms of a polynomial containing a positive, non-negative, zero, etc. element of $\TR$~\cite{JSY}.
\end{remark}

For a polynomial $f\in \T[\bm x]$, the associated hypersurface, $V(f)$, carries a natural 
polyhedral structure. Namely, one defines $\bm w, \bm w'\in V(f)$ to be in the relative interior of the same polyhedron if and only if $\initial_{\bm w}(f)=\initial_{\bm w}(f')$. The facets of this polyhedral complex consist of precisely those points $\bm w$ for which $\initial_{\bm w}(f)$ is a binomial.

This is a \emph{weighted} polyhedral complex where, if $\initial_{\bm w}(f) = \bm x^{\bm a}+\bm x^{\bm b}$ is a binomial, the weight $V(f)[\sigma]$ of the facet $\sigma$ containing $\bm w$ is the integral length of $\bm a - \bm b$.
The polyhedral complex on $V(f)$, together with the weights on the facets, is called the tropical hypersurface of $f$. By abuse of notation, we also denote it by $V(f)$.

There is also a dual complex to $V(f)$, which is the polyhedral complex on the Newton polytope of $f$ whose non-empty polyhedra are the convex hull of the supports of polynomials of the form $\initial_{\bm w}(f)$ for $\bm w \in \R^n$. The components of $\R^n\setminus V(f)$ correspond to the vertices of the Newton subdivision, which in turn are precisely the exponents of the essential monomials of $f$. The facets of $V(f)$ correspond to the edges of the Newton subdivision.

While we described $V(f)$ in terms of $f$ for simplicity, it only depends on the polynomial function $\PF_f$. In fact, $V(f)$ determines $\PF_f$ up to a linear function. As polynomial functions can be added (tropical multiplication), this induces a sum of tropical hypersurfaces as well. The sum of two tropical hypersurfaces $V$ and $W$ can be described explicitly without reference to the defining polynomials (or polynomial functions). Namely, the underlying set of $V+W$ is $V\cup W$, and the weights are the sums of the weights of $V$ and $W$, where on  $V\setminus W$ we take the weight to be $0$, and similarly on $W\setminus V$.

\section{Factoring multivariate polynomials over hyperfields}
\subsection{The hyperfield multiplicity}

\begin{definition}
\label{def:hyperfield multiplicity}
Let $\cF, \cL \subseteq H[\bm x]$ be non-empty sets of polynomials over a hyperfield $H$ and assume that the degree is bounded on $\cF$ (i.e.\ there exists some $d>0$ such that all $f\in \cF$ have degree at most $d$). We let
    \[
    (\cF:\cL)=\{g\in H[\bm x] :  g\cdot l\cap \cF \neq \emptyset \text{ for some }l\in \cL\}.
    \]
Then we define the \emph{hyperfield multiplicity} $\mult^{H}_\cL(\cF)$ as follows: if $\cL$ contains a unit, we set $\mult^H_{\cL}(\cF)=\infty$. Otherwise, we define the multiplicity inductively as
    \[
    \mult^{H}_{\cL}(\cF)=
        \begin{cases}
            0   & \text{if } (\cF:\cL)=\emptyset, \\
            1 + \mult^H_\cL((\cF:\cL))         &\text{else.}
        \end{cases}
    \]
\end{definition}

If $\cL = \{l\}$ or $\cF = \{f\}$ are singletons, we will use the same notation without the braces, such as $(f : l)$ or $\mult^H_l(f)$.

\begin{remark}
    In most prior works, the multiplicity operator is defined for one polynomial and one linear factor. The exception to this is the work of Liu, which allows for a set of linear factors (but where $\cF$ is still a single polynomial) \cite{Liu}.
\end{remark}

\begin{example}
\label{ex:multiplicity over Krasner hyperfield}
If $H = \K$, and $l=1+\sum_{i=1}^n x_i\in \K[x_1,\ldots, x_n]$,
then $l\cdot \sum_{\vert \bm m \vert \leq d-1} \bm x^{\bm m}$ is the set of all polynomials over $\K$ of Newton-degree $d$. So if $f\in \K[\bm x]$ has Newton-degree $d$, then $\mult_l(f)=d$.
\end{example}

\begin{lemma}
\label{lem:hyperfield multiplicity of a set is a maximum}
Let $\cF, \cL \subseteq H[\bm x]$ be non-empty sets such that the degree is bounded on $\cF$. Then we have
\[
\mult^H_{\cL}(\cF) = \max\{\mult^H_{\cL}(f) :  f\in \cF\} .
\]
\end{lemma}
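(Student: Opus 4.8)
The plan is to prove both inequalities in $\mult^H_{\cL}(\cF) = \max\{\mult^H_{\cL}(f) : f \in \cF\}$ by induction on the common degree bound $d$ for $\cF$, handling the unit case separately. First, if $\cL$ contains a unit, then $\mult^H_\cL(\cF) = \infty$ by definition, and also $\mult^H_\cL(f) = \infty$ for every $f \in \cF$ (note $\cF \ne \emptyset$), so both sides are $\infty$ and we are done. So assume $\cL$ contains no unit.

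For the inequality $\mult^H_{\cL}(\cF) \le \max\{\mult^H_{\cL}(f) : f \in \cF\}$: the key observation is that $(\cF : \cL) = \bigcup_{f \in \cF}(f : \cL)$ directly from the definition, since $g \in (\cF:\cL)$ means $g \cdot l \cap \cF \ne \emptyset$ for some $l \in \cL$, i.e.\ $g \cdot l \cap \{f\} \ne \emptyset$ for some $f \in \cF$ and $l \in \cL$, i.e.\ $g \in (f : \cL)$ for some $f \in \cF$. In particular $(\cF : \cL) = \emptyset$ iff every $(f : \cL) = \emptyset$, which settles the base of the induction (when $(\cF:\cL)$ is empty, $\mult^H_\cL(\cF) = 0$ and each $\mult^H_\cL(f) = 0$). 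Otherwise, pick $g \in (\cF:\cL)$ realizing (or approaching) the multiplicity of the set $(\cF:\cL)$; since $g \in (f_0:\cL)$ for some $f_0 \in \cF$ and the sets $(f:\cL)$ have degree bounded by $d - 1$ (a factor of a degree-$\le d$ polynomial by a nonzero $l$ has degree $\le d-1$), the inductive hypothesis applied to the set $(\cF:\cL) = \bigcup(f:\cL)$ gives $\mult^H_\cL((\cF:\cL)) = \max_f \mult^H_\cL((f:\cL))$, and then $\mult^H_\cL(\cF) = 1 + \max_f \mult^H_\cL((f:\cL)) = \max_f\bigl(1 + \mult^H_\cL((f:\cL))\bigr) = \max_f \mult^H_\cL(f)$, where the last equality uses that for each $f$ with $(f:\cL) = \emptyset$ we have $1 + \mult^H_\cL((f:\cL))$ should be read carefully---this is exactly where one must be slightly careful.

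The one subtlety, and the step I expect to be the main (minor) obstacle, is bookkeeping the recursion when $(f:\cL) = \emptyset$ for some but not all $f \in \cF$: for such $f$ we have $\mult^H_\cL(f) = 0$, whereas the recursion for $\mult^H_\cL(\cF)$ only "sees" the $f$ with $(f:\cL) \ne \emptyset$. So the clean argument is: if $(\cF:\cL) = \emptyset$ then all $(f:\cL) = \emptyset$, all $\mult^H_\cL(f) = 0$, and $\mult^H_\cL(\cF) = 0 = \max$; if $(\cF:\cL) \ne \emptyset$, let $\cF' = \{f \in \cF : (f:\cL) \ne \emptyset\} \ne \emptyset$, note $(\cF:\cL) = \bigcup_{f \in \cF'}(f:\cL)$, apply the inductive hypothesis to get $\mult^H_\cL((\cF:\cL)) = \max_{f \in \cF'}\mult^H_\cL((f:\cL))$, hence $\mult^H_\cL(\cF) = 1 + \max_{f \in \cF'}\mult^H_\cL((f:\cL)) = \max_{f\in\cF'}\mult^H_\cL(f) = \max_{f \in \cF}\mult^H_\cL(f)$, the last step because the $f \notin \cF'$ contribute $0$ and $\cF' \ne \emptyset$ contributes something $\ge 1$. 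This simultaneously proves both inequalities, so no separate converse argument is needed; the induction on $d$ is what makes it go through since passing from $\cF$ to $(\cF:\cL)$ strictly drops the degree bound.
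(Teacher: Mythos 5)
Your proof is correct, and it takes a genuinely different route from the paper. The paper splits the equality into two inequalities: the direction $\mult^H_{\cL}(\cF)\geq \max_f \mult^H_{\cL}(f)$ is dispatched via the monotonicity $\cF'\subseteq\cF \Rightarrow \mult^H_{\cL}(\cF')\leq\mult^H_{\cL}(\cF)$, and the reverse direction is proved by induction on the \emph{value} $\mult^H_{\cL}(\cF)$: one picks $g\in(\cF:\cL)$ attaining the maximum over $(\cF:\cL)$, finds $f\in\cF$ and $l\in\cL$ with $f\in g\cdot l$, and chains inequalities to get $\mult^H_{\cL}(f)\geq 1+\mult^H_{\cL}(g)=\mult^H_{\cL}(\cF)$. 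You instead induct on the degree bound $d$, and your central observation is the identity $(\cF:\cL)=\bigcup_{f\in\cF}(f:\cL)$, which the paper never states explicitly but which underlies everything. Because $(\cF:\cL)$ and each nonempty $(f:\cL)$ have degree bound $\leq d-1$ (here you use that $\cL$ has no unit, so each nonzero $l\in\cL$ has degree $\geq 1$), the inductive hypothesis gives $\mult^H_{\cL}((\cF:\cL))=\max_{f\in\cF'}\mult^H_{\cL}((f:\cL))$ with $\cF'=\{f\in\cF:(f:\cL)\neq\emptyset\}$, after which pulling the $+1$ inside the max yields the claim. Your version has the virtue of proving both inequalities in a single pass and of isolating the clean set-theoretic identity $(\cF:\cL)=\bigcup_f(f:\cL)$; the paper's is marginally shorter because it does not track degree bounds in the recursion, using the multiplicity itself as the well-founded quantity. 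One small presentational note: the middle paragraph where you begin a computation and then interrupt yourself to flag the subtlety should be folded into the ``clean argument'' paragraph that follows it, since only the latter is the actual proof. Also, both your proof and the paper's silently assume $0\notin\cL$ (or equivalently that the only relevant $l$ have positive degree); with $l=0$ in $\cL$ the colon set could fail to have bounded degree and both inductions would break, but this is an edge case the paper glosses over as well, so you are in good company.
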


\begin{proof}
It follows directly from the definition of the multiplicity that if $\emptyset\neq \cF'\subseteq \cF$, then
\[
\mult^H_{\cL}(\cF')\leq \mult^H_{\cL}(\cF) .
\]
Therefore, we have  
\[
\mult^H_{\cL}(\cF)\geq \max\{\mult^H_{\cL}(f) :  f\in \cF\} .
\]
We show the reverse implication by induction on $\mult^H_{\cL}(\cF)$, the base case $\mult^H_{\cL}(\cF)=0$ being trivial. If $\mult^H_{\cL}(\cF)>0$, then we have
\[
\mult^H_{\cL}((\cF:\cL))=\max\{\mult^H_{\cL}(g) :  g\in (\cF:\cL)\}
\]
by the induction hypothesis. Let $g\in (\cF:\cL)$ be an element where this maximum is attained and let $f\in \cF$ and $l\in \cL$ such that $f\in g\cdot l$. Then we have
\begin{align*}
    \mult^H_{\cL}(f)
    =\mult^H_{\cL}((f:\cL))+1
    &\geq \mult^H_{\cL}(g)+1 \\
    &=\mult^H_{\cL}((\cF:\cL))+1 
    =\mult^H_{\cL}(\cF) . \qedhere
\end{align*}
\end{proof}

\begin{lemma}
\label{lem:multiplicities and morphisms}
Let $H_1$ and $H_2$ be hyperfields, let $\Phi \colon H_1[\bm x]\to H_2[\bm x]$ be a diagonal transformation. Let $\cL, \cF \subseteq H_1[\bm x]$ such that the degree is bounded on $\cF$. Suppose that $\Phi(\cF)$ does not contain the zero polynomial. Then we have
\[
\mult^{H_1}_\cL(\cF)\leq \mult^{H_2}_{\Phi(\cL)}(\Phi(\cF)).
\]
\end{lemma}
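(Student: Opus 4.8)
The plan is to induct on $\mult^{H_1}_{\cL}(\cF)$. If $\cL$ contains a unit of $H_1[\bm x]$, then $\Phi(\cL)$ contains a unit of $H_2[\bm x]$ (a diagonal transformation sends units to units, since it is built from a hyperfield morphism, which sends the group $H_1^*$ into $H_2^*$, and a diagonal monomial substitution $\bm x\mapsto \bm a\bm x^{\bm k}$ with $\bm a\in (H_2^*)^n$), so both sides are $\infty$ and there is nothing to prove. So assume $\cL$ contains no unit. The base case $\mult^{H_1}_{\cL}(\cF)=0$ is immediate since multiplicities are non-negative. For the inductive step, suppose $\mult^{H_1}_{\cL}(\cF)=m+1>0$. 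By Lemma~\ref{lem:hyperfield multiplicity of a set is a maximum} we may pick $f\in\cF$ with $\mult^{H_1}_{\cL}(f)=m+1$, so there exist $g\in H_1[\bm x]$ and $l\in\cL$ with $f\in g\cdot l$ and $\mult^{H_1}_{\cL}(g)\ge m$ (indeed $g$ can be chosen in $(\cF:\cL)$ realizing $\mult^{H_1}_{\cL}((\cF:\cL))=m$).

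The key step is then to transport this one factorization through $\Phi$. By Corollary~\ref{cor:morphism-mult}, from $f\in g\cdot l$ we get $\Phi(f)\in\Phi(g)\cdot\Phi(l)$ in $H_2[\bm x]$, with $\Phi(l)\in\Phi(\cL)$ and $\Phi(f)\in\Phi(\cF)$. Hence $\Phi(g)\in(\Phi(\cF):\Phi(\cL))$, so this set is non-empty and
\[
\mult^{H_2}_{\Phi(\cL)}(\Phi(\cF)) = 1 + \mult^{H_2}_{\Phi(\cL)}\bigl((\Phi(\cF):\Phi(\cL))\bigr) \ge 1 + \mult^{H_2}_{\Phi(\cL)}(\Phi(g)),
\]
using Lemma~\ref{lem:hyperfield multiplicity of a set is a maximum} for the last inequality. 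Now I would like to apply the inductive hypothesis to $\mult^{H_1}_{\cL}(g)$; this requires knowing $\mult^{H_1}_{\cL}(g)\le m$ so that the induction is on a strictly smaller quantity, and also that $\Phi(g)$ is not the zero polynomial. The degree bound on $\cF$ is used here: since the degree (or rather the total degree, together with a fixed bound) is bounded on $\cF$ and factors have degree no larger than the product, iterated quotients $(\cF:\cL),((\cF:\cL):\cL),\dots$ also have bounded degree, so the relevant multiplicities are finite and the induction is well-founded; concretely, applying the inductive hypothesis to $g$ (viewed with $\cF$ replaced by the singleton $\{g\}$, whose image $\{\Phi(g)\}$ must be checked to avoid $0$) gives $\mult^{H_1}_{\cL}(g)\le\mult^{H_2}_{\Phi(\cL)}(\Phi(g))$, and combining with the display yields $\mult^{H_2}_{\Phi(\cL)}(\Phi(\cF))\ge 1+m = \mult^{H_1}_{\cL}(\cF)$.

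The main obstacle is the hypothesis ``$\Phi(\cF)$ does not contain the zero polynomial'': a diagonal transformation can kill a polynomial (if the underlying hyperfield morphism sends every coefficient of some $f$ to $0$), and when $\Phi(f)=0$ the multiplicity $\mult^{H_2}_{\Phi(\cL)}(\Phi(f))$ is $\infty$ (every quotient is non-empty), so individual terms behave badly. The fix is that we never need to push a polynomial through $\Phi$ that becomes zero: at each stage we choose a \emph{witness} $f\in\cF$ and a \emph{single} factorization $f\in g\cdot l$, and we only need $\Phi(g)\ne 0$. But $\Phi(g)\ne 0$ is not automatic from $\Phi(\cF)\not\ni 0$. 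The cleanest way around this is to argue directly with the \emph{set} $(\cF:\cL)$ rather than a single $g$: one shows $\Phi\bigl((\cF:\cL)\bigr)\subseteq(\Phi(\cF):\Phi(\cL))$, and that $\Phi$ of the iterated quotient chain of $\cF$ never introduces the zero polynomial \emph{as the image of an element that witnessed a strictly smaller multiplicity than $\mult^{H_1}_\cL(\cF)$}—because if $\Phi(f)=0$ then $\mult^{H_1}_\cL(f)$ would have to be bounded by the degree bound $d$ while $\mult^{H_2}_{\Phi(\cL)}$ of a set containing a nonzero polynomial dominates it anyway. Spelling this out carefully—perhaps by first reducing to the case $\cF=\{f\}$ via Lemma~\ref{lem:hyperfield multiplicity of a set is a maximum}, then separately handling $f$'s whose image is $0$ (whose contribution to the max is irrelevant once some image is nonzero, guaranteed by $0\notin\Phi(\cF)$)—is the one genuinely delicate point; everything else is a direct unwinding of the inductive definition of $\mult$ together with Corollary~\ref{cor:morphism-mult}.
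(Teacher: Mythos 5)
Your approach is the same as the paper's (the paper's proof is simply ``by induction from Corollary~\ref{cor:morphism-factorization} and Lemma~\ref{lem:mon-transf-factorization}'', i.e.\ Corollary~\ref{cor:morphism-mult}), and the main body of your argument is correct. However, your last paragraph chases a phantom: the worry that $\Phi(g)$ might be the zero polynomial is resolved immediately, without any of the complicated workarounds you sketch. If $\Phi(g)=0$, then every coefficient of $\Phi(g)$ is $0$, so $\Phi(g)\cdot\Phi(l)=\{0\}$; since $\Phi(f)\in\Phi(g)\cdot\Phi(l)$, this forces $\Phi(f)=0$, contradicting the hypothesis that $0\notin\Phi(\cF)$. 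Thus $\Phi(g)\neq 0$ is \emph{automatic} for your chosen $g$ (and $l$), and you may apply the inductive hypothesis to the singleton $\{g\}$ without further ado.

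One small clean-up: you do not need to ``separately handle $f$'s whose image is $0$''---there are none, by hypothesis. Your remark that the degree bound is what makes the induction well-founded is correct (for $l$ not a unit, each division strictly lowers the degree, and $0\notin\cF$ because $\Phi(0)=0$ would lie in $\Phi(\cF)$), but the tangent about ``$\mult^{H_1}_\cL(f)$ bounded by $d$ while $\mult^{H_2}_{\Phi(\cL)}$ dominates it anyway'' is not a valid argument and is, in any case, unnecessary. Once you cut the last paragraph and insert the one-line observation above, the proof is complete and matches the paper's.
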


\begin{proof}
    Since the degree is bounded on $\cF$, it is also bounded on $\Phi(\cF)$. Also, if $\cL$ contains a unit, then so does $\Phi(\cL)$. Therefore, we may assume that neither $\cL$ nor $\Phi(\cL)$ contain a unit.

    The result now follows by induction from Corollary~\ref{cor:morphism-factorization} and Lemma~\ref{lem:mon-transf-factorization}. 
\end{proof}

\subsection{The boundary multiplicity}
For $i = 0,\dots, n$, let $\pi_i$ be the monomial transformation which substitutes $x_i \mapsto 0$ and $x_j \mapsto x_j$ for $j \ne i$. These monomial transformations are subject to Lemma~\ref{lem:multiplicities and morphisms}.

\begin{definition}
\label{def:boundary multiplicity}
Let $\cF, \cL \subseteq H[x_1,\ldots, x_n]$ be nonempty sets such that the degree on $\cF$ is bounded. Let $\widetilde{\cF}$ and $\widetilde{\cL}$ denote the polynomials in the variables $x_0,\ldots, x_n$ obtained by homogenizing the sets $\cF$ and $\cL$, respectively.
We define the \emph{boundary multiplicity} of $\cF$ at $\cL$ to be
\[
    \bmult^H_{\cL}(\cF) = \bmult^H_{\widetilde \cL}(\widetilde \cF) = \min \{\mult_{\pi_i(\widetilde \cL)}^{H}(\pi_i( \widetilde \cF))  :  0\leq i\leq n  \}
\]
\end{definition}

\begin{corollary}
\label{cor:mult leq bmult}
    Let $\cF, \cL \subset H[\bm x]$ be nonempty sets with bounded degree on $\cF$. We have
    \[
    \mult^H_\cL(\cF) \le \bmult^H_\cL(\cF).
    \]
\end{corollary}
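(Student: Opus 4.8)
The plan is to show that for any fixed polynomial $f\in\cF$, each homogenization-then-restriction map $\pi_i\circ(\text{homogenize})$ can only decrease the multiplicity, so that $\mult^H_\cL(f)\le\mult^H_{\pi_i(\widetilde\cL)}(\pi_i(\widetilde\cF))$ for every $i$, and then take the minimum over $i$ and the maximum over $f$. The first observation I would make is that homogenizing is a harmless bookkeeping operation: factorizations $f\in g\cdot l$ in $H[x_1,\dots,x_n]$ correspond bijectively to factorizations $\widetilde f\in\widetilde g\cdot\widetilde l$ in $H[x_0,\dots,x_n]$ of the homogenizations (one recovers the dehomogenization by setting $x_0=1$, and $\deg\widetilde g=\deg g$, $\deg\widetilde l=\deg l=1$). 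Hence $\mult^H_\cL(\cF)=\mult^H_{\widetilde\cL}(\widetilde\cF)$, which is why the definition of $\bmult$ is stated using either description interchangeably; I may simply cite this as immediate from the definitions, or spell out the easy induction.

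Next, each $\pi_i$ is a diagonal monomial transformation in the sense of Definition~\ref{def:poly-morphism} (it substitutes $x_i\mapsto 0$, which is the case $a_i=0$), so Lemma~\ref{lem:multiplicities and morphisms} applies \emph{provided} $\pi_i(\widetilde\cF)$ does not contain the zero polynomial. This is the one point that needs a small argument: since $\widetilde\cF$ consists of homogeneous polynomials of some common degree $d\ge 1$ (after homogenizing, all have the same degree, and the problem is vacuous if $d=0$ as then $\cL$ would contain a unit and both sides are $\infty$), and $\pi_i$ kills only the monomials divisible by $x_i$, the image $\pi_i(\widetilde f)$ is zero only if \emph{every} monomial of $\widetilde f$ is divisible by $x_i$. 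If that happens for some $f$, then that particular $i$ is problematic, but one can handle this by restricting the minimum in the definition of $\bmult$ appropriately, or by noting that in the relevant applications $\cF$ is dense; most cleanly, I would observe that if $\pi_i(\widetilde f)=0$ then $x_i\mid\widetilde f$, and since the homogenization $\widetilde l$ of a degree-$1$ polynomial $l$ is not divisible by $x_i$ unless $l$ is, one can still run the argument on the other variable or simply exclude such $i$ (in which case the stated inequality still holds as a minimum over a possibly smaller, nonempty index set). For the statement as written, the cleanest route is: apply Lemma~\ref{lem:multiplicities and morphisms} with $\Phi=\pi_i$ whenever $\pi_i(\widetilde\cF)\not\ni 0$, giving $\mult^H_{\widetilde\cL}(\widetilde\cF)\le\mult^H_{\pi_i(\widetilde\cL)}(\pi_i(\widetilde\cF))$ for each such $i$.

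Finally, combining the two steps, $\mult^H_\cL(\cF)=\mult^H_{\widetilde\cL}(\widetilde\cF)\le\mult^H_{\pi_i(\widetilde\cL)}(\pi_i(\widetilde\cF))$ for every admissible $i$, hence $\mult^H_\cL(\cF)\le\min_i\mult^H_{\pi_i(\widetilde\cL)}(\pi_i(\widetilde\cF))=\bmult^H_\cL(\cF)$, which is exactly the claim. The main obstacle is the bookkeeping around the non-vanishing hypothesis of Lemma~\ref{lem:multiplicities and morphisms}: one must be careful that $\pi_i(\widetilde\cF)$ is genuinely a nonempty set of nonzero polynomials so the lemma is applicable, and that homogenization does not secretly change degrees or introduce units; neither is deep, but both deserve an explicit sentence. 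Everything else is a two-line induction plus invoking the earlier lemma.
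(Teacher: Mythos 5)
Your route is the same as the paper's one-line proof: homogenization preserves multiplicity, then apply Lemma~\ref{lem:multiplicities and morphisms} to each of the projections $\pi_i$. You are right to flag the hypothesis of that lemma that $\Phi(\cF)$ must avoid the zero polynomial, which the paper passes over silently, and right that the only worrisome case is $i\geq 1$ with $x_i \mid f$ for some $f\in\cF$ (for $i=0$, $\pi_0(\widetilde f)$ is the leading form of $f$, never zero).

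However, your resolution of that worry does not close the argument. Excluding the ``bad'' indices replaces $\bmult^H_\cL(\cF)=\min_{0\le i\le n}\mult^H_{\pi_i(\widetilde\cL)}(\pi_i(\widetilde\cF))$ by a minimum over a smaller index set, and a minimum over fewer terms is \emph{larger}, so the chain
\[
\mult^H_\cL(\cF)\;\le\;\min_{i\ \text{admissible}}\mult^H_{\pi_i(\widetilde\cL)}(\pi_i(\widetilde\cF))\;\ge\;\min_{\text{all } i}\mult^H_{\pi_i(\widetilde\cL)}(\pi_i(\widetilde\cF))=\bmult^H_\cL(\cF)
\]
does not yield what you want. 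The clean fix, which you circle around but never state, is that if $0\in\pi_i(\widetilde\cF)$ then $\mult^H_{\pi_i(\widetilde\cL)}(\pi_i(\widetilde\cF))=\infty$: the zero polynomial satisfies $0\in 0\cdot l$ for every $l$, so $0$ is always in the colon set $(\pi_i(\widetilde\cF):\pi_i(\widetilde\cL))$, the inductive definition never terminates, and the multiplicity is infinite. Hence the inequality $\mult^H_\cL(\cF)\le\mult^H_{\pi_i(\widetilde\cL)}(\pi_i(\widetilde\cF))$ holds for such $i$ without invoking the lemma at all, and in fact these indices never determine the minimum. With that observation in place, the rest of your argument (the bijection between factorizations of $f$ and $\widetilde f$, and Lemma~\ref{lem:multiplicities and morphisms} applied to each admissible $\pi_i$) goes through and matches the paper's intent.
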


\begin{proof}
Since multiplicities are not affected by homogenization, this follows directly from Lemma~\ref{lem:multiplicities and morphisms} applied to the morphisms $\pi_i$ for $0\leq i\leq n$.
\end{proof}

\begin{example}
\begin{enumerate}[label=(\alph*)]
    \item []
    \item If $f\in \K[\bm x]$ has Newton-degree $d$ and $l\in \K[\bm x]$ is the unique polynomial of Newton-degree $1$, then by Example~\ref{ex:multiplicity over Krasner hyperfield} we have
    \[
    \mult^\K_l(f)= \bmult^\K_l(f) = d.
    \]
    \item Let $f\in \S[x,y,z]$ be the degree-$3$ polynomial given by
\[
f=    \begin{tikzpicture}[baseline=(current bounding box.center)]
        \matrix[matrix of math nodes]{
        + &   &   &  \\
        - & + &   &  \\
        + & + & - &  \\
        + & + & - & +\\
        };
    \end{tikzpicture} 
\]
and let $l$ be the degree-$1$ polynomial given by
\[
l=
    \begin{tikzpicture}[baseline=(current bounding box.center)]
        \matrix[matrix of math nodes]{
        + &   \\
        + & + \\
        };
    \end{tikzpicture}.
\]
Then by the univariate Descartes' Rule of Signs~\cite[Example A.2]{G2}, \cite[Theorem C]{BL}, we have $\bmult^\S_l(f)=1$. We claim that $\mult^\S_l(f)=0$. Indeed, if $f\in g\cdot l$, then it follows from the conditions on the boundary that 
\[
  g= \begin{tikzpicture}[baseline=(current bounding box.center)]
        \matrix[matrix of math nodes]{
        + &   &     \\
        - & - &     \\
        + & - & +   \\
        };
    \end{tikzpicture}
\]
But for this choice of $g$, the $xy$-coefficient of any $h\in g\cdot l$ is necessarily negative, contradicting the fact that the $xy$-coefficient of $f$ is positive.
\end{enumerate}
\end{example}

\subsection{Multiplicities and initial forms}

\begin{example}
\label{ex:initial form}
    Let $f=\sum_{m\in \Z_{\geq 0}^n} a_{\bm m} x^{\bm m}\in (H \rtimes \R)[\bm x]$ be a polynomial in $n$-variables and let $\bm w \in \R^n$. Moreover, let $l=1+\sum_{i=1}^n t^{-w_i}x_i\in (H \rtimes \R)[\bm x]$. We have 
    \[
    \initial_{\bm w}(l)= 1+\sum_{i=1}^n x_i .
    \]
    In the univariate case (i.e.\ $n=1$), we have 
    \[
        \mult_l(f)=\mult_{\initial_{\bm w}(l)}(\initial_{\bm w}(f))
    \]
    by \cite[Theorem~A]{G2}. This cannot be true in higher dimensions by Lemma~\ref{lem:additivity of PL functions}. Concretely, it fails for the polynomial 
    \[
    f= 0+x+y+2x^2 + 1xy+ 2y^2 \in \T[x,y]
    \]
    and $w=0$. In this case, we have $\initial_0(f)=\initial_0(l)=1+x+y$ and hence $\mult_{\initial_{\bm w}(l)}(\initial_{\bm w}(f))=1$. On the other hand, $V(f)$ does not contain $V(l)$, as shown in Figure~\ref{fig:initial-form-counterexample}, and therefore $\mult_l(f)=0$ by Lemma~\ref{lem:additivity of PL functions}. We observe that
    \[
    \mult_l(f)\leq \mult_{\initial_{\bm w}(l)}(\initial_{\bm w}(f))
    \]
    in this example. 
\end{example}

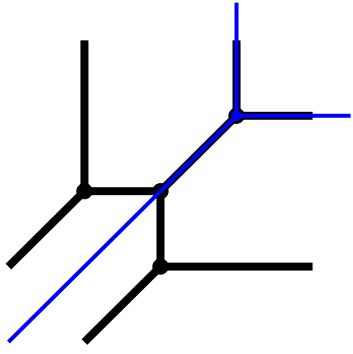
\begin{figure}[htbp]
    \centering
    \begin{tikzpicture}
        \foreach \x/\y in {0/0, -1/-1, -1/-2, -2/-1} {
            \filldraw (\x, \y) circle (0.1);
        }
        \draw[line width=3] (-3, -2) -- (-2, -1) -- (-1, -1) -- (0, 0) -- (0, 1);
        \draw[line width=3] (-2, -3) -- (-1, -2) -- (-1, -1);
        \draw[line width=3] (-2, -1) -- (-2, 1);
        \draw[line width=3] (-1, -2) -- (1, -2);
        \draw[line width=3] (0, 0) -- (1, 0);

        \filldraw[blue] (0, 0) circle (0.06);
        \draw[blue, line width=1.5] (0, 0) -- (0, 1.5);
        \draw[blue, line width=1.5] (1.5, 0) -- (0, 0) -- (-3, -3);
    \end{tikzpicture}
    \caption{Tropical curves defined by $0 + x + y + 2x^2 + 1xy + 2y^2$ and $\color{blue}{0 + x + y}$.}
    \label{fig:initial-form-counterexample}
\end{figure}

\begin{proposition}
   Let $H$ be a hyperfield, let $f\in (H \rtimes \R)[\bm x]$, and let $\bm w \in \R^n$. Moreover, let $\cL$ be a set of linear forms. Then we have 
    \[
        \mult_\cL(f)\leq \mult_{\initial_{\bm w}(\cL)}(\initial_{\bm w}(f)) , 
    \]
    where $\initial_{\bm w}(\cL)=\{\initial_{\bm w}(l) :  l\in \cL\}$.
\end{proposition}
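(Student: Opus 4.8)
The plan is to reduce the statement to the multiplicativity of initial forms (Lemma~\ref{lem:taking initial forms respects products}) combined with an induction on $\mult_\cL(f)$, exactly mirroring the proof of Lemma~\ref{lem:multiplicities and morphisms}. First I would dispose of the degenerate cases: if some $l \in \cL$ is a unit, then $\mult_\cL(f) = \infty$, but then $\initial_{\bm w}(l)$ is also a unit in $H[\bm x]$ (taking an initial form of a monomial just rescales the valuation part and then lands on a single monomial, which is a unit), so the right-hand side is $\infty$ as well and there is nothing to prove. So assume no $l \in \cL$ is a unit. I would also note that the hypotheses of Definition~\ref{def:hyperfield multiplicity} are met on both sides: $\cF = \{f\}$ is a singleton so the degree is bounded, and $\initial_{\bm w}(\cL)$ consists of linear forms, none of which is a unit (the initial form of a linear form with nonzero constant term still has nonzero constant term, and more generally a linear form over a hyperfield is a unit only if it is a nonzero monomial; but an initial form of a non-unit linear form is again a non-unit, since $\initial_{\bm w}$ only kills monomials, never creates a situation with fewer terms that is a single monomial unless the original already was — this needs a short check).

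The induction is on $d := \mult_\cL(f)$. The base case $d = 0$ is vacuous. For the inductive step, suppose $d \ge 1$. Then $(f : \cL) \ne \emptyset$, so there exist $g \in H[\bm x]$ and $l \in \cL$ with $f \in g \cdot l$, and moreover $\mult_\cL(g) \ge \mult_\cL((f:\cL)) = d - 1$ — actually I would pick $g$ realizing the maximum, i.e.\ with $\mult_\cL(g) = d-1$, which exists by Lemma~\ref{lem:hyperfield multiplicity of a set is a maximum}. By Lemma~\ref{lem:taking initial forms respects products} applied to $f \in g \cdot l$ in $(H \rtimes \R)[\bm x]$, we get $\initial_{\bm w}(f) \in \initial_{\bm w}(g) \cdot \initial_{\bm w}(l)$. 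Since $\initial_{\bm w}(l) \in \initial_{\bm w}(\cL)$, this shows $\initial_{\bm w}(g) \in \bigl(\initial_{\bm w}(f) : \initial_{\bm w}(\cL)\bigr)$, hence that set is nonempty and
\[
\mult_{\initial_{\bm w}(\cL)}(\initial_{\bm w}(f)) = 1 + \mult_{\initial_{\bm w}(\cL)}\bigl((\initial_{\bm w}(f) : \initial_{\bm w}(\cL))\bigr) \ge 1 + \mult_{\initial_{\bm w}(\cL)}(\initial_{\bm w}(g)).
\]
Now the induction hypothesis applies to $g$ (which has $\mult_\cL(g) = d-1 < d$): it gives $\mult_\cL(g) \le \mult_{\initial_{\bm w}(\cL)}(\initial_{\bm w}(g))$, possibly after checking $\initial_{\bm w}(g)$ is not the zero polynomial. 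Chaining the inequalities yields $\mult_{\initial_{\bm w}(\cL)}(\initial_{\bm w}(f)) \ge 1 + (d-1) = d = \mult_\cL(f)$, as desired.

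The main obstacle I anticipate is not the inductive skeleton — that is routine — but rather the bookkeeping needed to invoke the multiplicity definition legitimately at each stage, namely that initial forms never produce the zero polynomial (true, because the normalization $t^{-\PF_{f^\nu}(\bm w)}$ guarantees some coefficient has valuation $0$, so at least one monomial survives $r$) and that $\initial_{\bm w}$ of a non-unit linear form is still a non-unit linear form. A linear form $l = l_0 + \sum_i l_i x_i$ over $H \rtimes \R$ has an initial form that picks out the terms achieving the minimum valuation after the substitution $x_i \mapsto t^{w_i} x_i$; the result is supported on a nonempty subset of $\{0, e_1, \dots, e_n\}$, and it is a unit only if that subset is a single $\{e_j\}$, i.e.\ a monomial $c x_j$ — but if $l$ itself was not a unit it had at least two nonzero coefficients, and one can check directly that this forces $\initial_{\bm w}(l)$ either to have $\ge 2$ terms or, in the borderline case where only one term achieves the minimum, to still be a non-unit only when that term is the constant term or a combination; I would handle this with a one-line case analysis. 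Everything else is a direct transcription of the proof of Lemma~\ref{lem:multiplicities and morphisms} with ``diagonal transformation'' replaced by ``taking initial forms,'' using Lemma~\ref{lem:taking initial forms respects products} in place of Corollary~\ref{cor:morphism-factorization}.
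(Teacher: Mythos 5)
Your argument matches the paper's proof, which is simply ``follows from Lemma~\ref{lem:taking initial forms respects products} and induction''; you have spelled out the induction and its core is correct: pick $g$ realizing the maximum in $(f:\cL)$, apply Lemma~\ref{lem:taking initial forms respects products} to get $\initial_{\bm w}(f)\in \initial_{\bm w}(g)\cdot\initial_{\bm w}(l)$, and invoke the inductive hypothesis on $g$.

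Two of your side-remarks about the unit bookkeeping are false, however. First, a degree-$1$ monomial $cx_j$ is \emph{not} a unit in $H[\bm x]$: for any polynomial $g$, the constant coefficient of every element of $cx_j\cdot g$ is $0$, so no product can equal $1$; the units are exactly the nonzero constants. Second, and more significantly, your claim that the initial form of a non-unit linear form is again a non-unit does not survive the ``short check'' you anticipated. Take $l = 1 + t x_1 \in (H\rtimes\R)[x_1]$ with $\nu(t) > 0$ and $\bm w = 0$: then after the normalization only the constant term has valuation $0$, so $\initial_{\bm 0}(l) = 1$, which is a unit. Fortunately this does not derail the argument. If $\initial_{\bm w}(\cL)$ contains a unit, then by Definition~\ref{def:hyperfield multiplicity} the right-hand side is $\infty$ and the inequality holds trivially; so the inductive step only needs to be run in the case that $\initial_{\bm w}(\cL)$ is unit-free, which is exactly when the recursive formula for the multiplicity is available. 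Replacing your false claim with that one-line dichotomy closes the gap.
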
   

\begin{proof}
    This follows from Lemma~\ref{lem:taking initial forms respects products} and induction.
\end{proof}

In the case where the polynomial $f$ is defined over a field and factors as a product of linear forms, the initial forms contain considerably more information:

\begin{proposition}
\label{prop:initial ideal gives multiplicity for product of linear forms}
    Let $K$ be an algebraically closed valued field with residue field $\kappa$, let $f=\prod_{i=1}^d l_i\in K[\bm x]$ be a product of linear polynomials $l_i\in K[\bm x]$, and let $\bm w \in \R^n$. Moreover, let $l = 0 + \sum (-w_i) \cdot x_i\in \T[\bm x]$. Then we have
    \[
        \mult_{\nu^{-1}\{l\}}^K(f)=\mult_{\nu_0^{-1}\{\initial_{\bm w}(l)\}}^\kappa(\initial_{\bm w}(f))
    \]
\end{proposition}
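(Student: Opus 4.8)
The plan is to reduce the statement about counting linear factors over $K$ with prescribed valuation to a statement about counting linear factors over the residue field $\kappa$, by passing to initial forms and showing no information is lost. The key observation is that since $f = \prod_{i=1}^d l_i$ factors completely into linear polynomials $l_i = a_i^{(0)} + \sum_j a_i^{(j)} x_j$, and $K$ is algebraically closed, unique factorization in $K[\bm x]$ (up to units) reduces everything to bookkeeping on the individual $l_i$. First I would normalize each $l_i$ by a unit so that the minimum valuation among its coefficients is $0$; this does not change $V(f)$ or any multiplicity. Then $\initial_{\bm w}(f) = \prod_{i=1}^d \initial_{\bm w}(l_i)$ by Lemma~\ref{lem:additivity of PL functions} (or Lemma~\ref{lem:taking initial forms respects products}), and each $\initial_{\bm w}(l_i)$ is again a linear polynomial over $\kappa$ (possibly with fewer monomials).

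Next I would identify precisely when a normalized linear form $l_i$ contributes to $\mult^K_{\nu^{-1}\{l\}}(f)$, i.e.\ when $\nu(l_i) = l$ as polynomials over $\T$ after the normalization — equivalently, when $l_i$ lies in $\nu^{-1}\{l\}$ up to a unit. Here $l = 0 + \sum(-w_i)x_i$, so $\nu(l_i^{\mathrm{norm}}) = l$ means $\nu(a_i^{(j)}) = -w_j$ for all $j$ and $\nu(a_i^{(0)}) = 0$; in other words all coefficients have exactly the valuation prescribed by $l$ and none is dominated. The crucial claim is then the equivalence: $\nu(l_i^{\mathrm{norm}}) = l$ if and only if $\initial_{\bm w}(l_i^{\mathrm{norm}})$ is a nonzero scalar multiple of $0 + \sum x_i = \initial_{\bm w}(l)$, which holds iff $\initial_{\bm w}(l_i^{\mathrm{norm}}) \in \nu_0^{-1}\{\initial_{\bm w}(l)\}$ up to a unit in $\kappa$. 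Indeed, $\initial_{\bm w}(l_i^{\mathrm{norm}})$ keeps exactly those monomials whose $(\nu$-)value meets the minimum, so it equals $\initial_{\bm w}(l)$ up to a unit precisely when every monomial of $l$ survives in $l_i$, which is the valuation condition just stated.

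Putting these together: $\mult^K_{\nu^{-1}\{l\}}(f)$ counts (with multiplicity, using unique factorization) how many of the $l_i$, after normalization, have valuation exactly $l$; and this count equals the number of $l_i$ for which $\initial_{\bm w}(l_i^{\mathrm{norm}})$ is a unit multiple of $\initial_{\bm w}(l)$, which, since $\initial_{\bm w}(f) = \prod_i \initial_{\bm w}(l_i^{\mathrm{norm}})$ with all factors linear over $\kappa$ and $\kappa$ algebraically closed, is exactly $\mult^\kappa_{\nu_0^{-1}\{\initial_{\bm w}(l)\}}(\initial_{\bm w}(f))$ by unique factorization in $\kappa[\bm x]$. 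The inequality ``$\le$'' is already guaranteed by the preceding Proposition; the content here is the reverse inequality, which comes from the fact that initial forms of the $l_i$ that are \emph{not} unit multiples of $\initial_{\bm w}(l)$ genuinely cannot be used to factor out $\initial_{\bm w}(l)$ — this is where one must be slightly careful, since over a hyperfield quotients need not be unique, but here everything is over a field, so unique factorization applies and the argument is clean.

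The main obstacle I anticipate is the ``only if'' direction of the equivalence claim, namely verifying that if $\initial_{\bm w}(l_i^{\mathrm{norm}})$ is a unit multiple of $\initial_{\bm w}(l) = 0 + \sum x_i$ then in fact $\nu(l_i^{\mathrm{norm}}) = l$. One must rule out the possibility that some coefficient $a_i^{(j)}$ of $l_i^{\mathrm{norm}}$ has valuation $> -w_j$ (so that the $x_j$-monomial of $l_i^{\mathrm{norm}}$ does not appear in the initial form), while still $\initial_{\bm w}(l_i^{\mathrm{norm}})$ happens to equal a unit multiple of $0 + \sum x_i$ — but this is impossible precisely because the latter has \emph{all} monomials $1, x_1, \dots, x_n$ present, forcing every coefficient of $l_i^{\mathrm{norm}}$ to achieve the minimal value. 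A secondary subtlety is keeping the unit-normalizations and the ``up to a unit'' clauses in the two multiplicities consistent, which is routine but should be spelled out once.
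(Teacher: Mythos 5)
Your overall strategy matches the paper's: reduce both multiplicities to counting the $l_i$ (with multiplicity, via unique factorization) satisfying a certain valuation condition, and show that this condition on $l_i$ is equivalent to $\initial_{\bm w}(l_i)$ having full support $\Delta_n$.

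However, there is a genuine flaw in your chosen normalization. You scale each $l_i$ so that the \emph{minimum} valuation among its coefficients is $0$, and then assert that ``$\mult^K_{\nu^{-1}\{l\}}(f)$ counts how many of the $l_i$, after normalization, have valuation exactly $l$.'' This is false in general, because the target $l = 0 + \sum(-w_j)x_j$ need not have minimal coefficient valuation equal to $0$: the valuations of its coefficients are $\{0\}\cup\{-w_j\}$, whose minimum is $0$ only if all $w_j\le 0$. Concretely, take $n=1$, $w=1$, so $l = 0 + (-1)x \in \T[x]$, and take $l_1 = t + x$ over a Puiseux series field. Then $t^{-1}l_1 = 1 + t^{-1}x \in \nu^{-1}\{l\}$, so $l_1$ does contribute to the left-hand multiplicity; but $l_1$ already has minimal coefficient valuation $0$ (at the $x$-term), so $l_1^{\mathrm{norm}}=l_1$ has $\nu(l_1^{\mathrm{norm}}) = 1 + 0\cdot x\ne l$. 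Your ``crucial claim'' equivalence therefore fails here, since $\initial_{\bm w}(l_1)=1+x$ does have full support. The root of the mismatch is that ``$\nu(l_i^{\mathrm{norm}})=l$'' depends on the choice of representative, while ``$\initial_{\bm w}(l_i^{\mathrm{norm}})$ is a unit multiple of $\initial_{\bm w}(l)$'' does not. The paper avoids this by normalizing so that the \emph{constant} coefficient of each $l_i$ (when nonzero) is $1$: this is the correct choice because the constant term of $l$ is $1_\T$ (i.e., valuation $0$), so under that normalization ``$l_i^\nu = l$'' and ``some unit multiple of $l_i$ lies in $\nu^{-1}\{l\}$'' really do coincide, and the rest of your argument then goes through exactly as you outline it.

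A minor secondary point: you write $\initial_{\bm w}(l) = 0+\sum x_i$; the initial form lives in $\K[\bm x]$, so it is $1+\sum x_i$ (all nonzero coefficients become $1_\K$). This is only a notational slip and not a substantive issue.
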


\begin{proof}
After potentially scaling $f$ and the $l_i$, we may assume that the constant coefficient of each $l_i$, if it exists, is equal to $1$. Then the multiplicity $\mult_{\nu^{-1}\{l\}}^K(f)$ is equal to the number of $1\leq i\leq d$ such that $l_i^\nu=l$. Under the assumption on the constant coefficients, $l_i^\nu=l$ is equivalent to $\initial_{\bm w}(l_i)$ having support $\Delta_n$, which is equivalent to 
\[
\initial_{\bm w}(l_i)^{\nu_0} = 1 + \sum_{j=1}^n x_i=\initial_{\bm w}(l)\in \K[\bm x]
\]
Combining this with the fact that
\[
    \initial_{\bm w}(f)= \prod_{i=1}^d \initial_{\bm w}(l_i) 
\]
(Lemma~\ref{lem:taking initial forms respects products}), concludes the proof.
\end{proof}

\begin{lemma}
\label{lem:lifting real solutions of multiplicity 1}
    Let $K$ be a valued real closed field with residue field $\kappa$, and let $f=\prod_{i=1}^dl_i\in K[\bm x]$ be a product of linear polynomials $l_i\in \overline K[\bm x]$ over the algebraic closure $\overline K = K[\sqrt{-1}]$ of $K$. Furthermore, let $\bm w \in \R^n$ and assume that a degree-$1$ polynomial $\overline l\in \kappa[\bm x]$ divides $\initial_{\bm w}(f)$ with multiplicity $1$. Then there exists a degree-$1$ polynomial $l\in K[\bm x]$ dividing $f$ with $\initial_{\bm w}(l)=\overline l$. 
\end{lemma}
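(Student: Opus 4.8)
\textbf{Proof proposal for Lemma~\ref{lem:lifting real solutions of multiplicity 1}.}

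The plan is to use the factorization $f = \prod_i l_i$ and the compatibility of initial forms with products (Lemma~\ref{lem:taking initial forms respects products}) to locate a single factor $l_i$ among the $l_i$ whose initial form ``contains'' $\overline l$, and then show that after rescaling, this $l_i$ actually lies in $K[\bm x]$ (not just $\overline K[\bm x]$) and has initial form exactly $\overline l$. First I would normalize: for each $i$ with $\bm w$-initial form a monomial times a unit I can discard that factor from the product up to a unit (its initial form contributes only a monomial factor to $\initial_{\bm w}(f)$, and $\overline l$ has degree $1$, so $\overline l$ cannot divide such a monomial); so I may assume every $l_i$ contributes a genuine degree-$\ge 1$ polynomial $\initial_{\bm w}(l_i)$. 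By Lemma~\ref{lem:taking initial forms respects products}, $\initial_{\bm w}(f)$ lies in $\prod_i \initial_{\bm w}(l_i)$; since $\kappa$ is a field (in particular $\kappa[\bm x]$ is a UFD and products in $\kappa[\bm x]$ are single-valued once coefficients are fixed), this is an honest product in $\kappa[\bm x]$, and unique factorization plus the multiplicity-$1$ hypothesis forces $\overline l$ to divide $\initial_{\bm w}(l_{i_0})$ for exactly one index $i_0$, with multiplicity $1$.

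Next I analyze the single linear form $l_{i_0} \in \overline K[\bm x]$. Write $l_{i_0} = c_0 + \sum_j c_j x_j$ with $c_j \in \overline K$. The initial form $\initial_{\bm w}(l_{i_0})$ is obtained by taking the coefficients $c_j$ of minimal ``tilted valuation'' $\nu(c_j) - w_j$ (with the convention for $c_0$ that $w_0 = 0$) and passing to the residue field via the angular component. Since $\overline l$ has degree $1$ and divides this initial form with multiplicity one and $\initial_{\bm w}(l_{i_0})$ is itself (a monomial times) a linear form, $\overline l$ must be a scalar multiple of $\initial_{\bm w}(l_{i_0})$ up to a monomial, and in particular $\initial_{\bm w}(l_{i_0})$ is, after dividing by the appropriate monomial, $\lambda \overline l$ for some $\lambda \in \kappa^*$. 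Rescaling $l_{i_0}$ by a lift of $\lambda^{-1}$ (which changes nothing since we only care about divisibility of $f$) I may assume $\initial_{\bm w}(l_{i_0})$ equals $\overline l$ up to a monomial factor $\bm x^{\bm a}$; but a degree-$1$ factor $l_{i_0}$ whose initial form is $\bm x^{\bm a}\cdot(\text{linear})$ can have $\bm a \ne 0$ only in degenerate ways, and in fact multiplying $l_{i_0}$ through by the unit $t^{-\gamma}$ for the minimizing value $\gamma$ shows $\initial_{\bm w}(l_{i_0})$ is genuinely the residue of the normalized $l_{i_0}$; I will arrange $\initial_{\bm w}(l_{i_0}) = \overline l$ on the nose.

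Finally I must descend from $\overline K$ to $K$. This is where the real-closedness enters and is the main obstacle. The algebraic closure is $\overline K = K[\sqrt{-1}]$, and complex conjugation $\sigma$ permutes the factors $l_i$ of the real polynomial $f$. Since $\overline l \in \kappa[\bm x]$ with $\kappa$ real closed, $\overline l$ is fixed by the conjugation $\bar\sigma$ induced on $\kappa(\sqrt{-1}) = \overline{\kappa}$; hence $\initial_{\bm w}(\sigma l_{i_0}) = \bar\sigma(\initial_{\bm w}(l_{i_0})) = \bar\sigma(\overline l) = \overline l$. Because $\overline l$ divides $\initial_{\bm w}(f)$ with multiplicity exactly one, the index $i_0$ with $\overline l \mid \initial_{\bm w}(l_{i_0})$ is unique, so $\sigma l_{i_0} = \mu\, l_{i_0}$ for some $\mu \in \overline K^*$; applying $\sigma$ again gives $\mu \sigma(\mu) = 1$, and comparing initial forms (which are unchanged up to the unit $\mu$ only if $\angular(\mu) = 1$, forced by $\initial_{\bm w}(\sigma l_{i_0}) = \initial_{\bm w}(l_{i_0})$) lets me conclude $\mu$ lies in the kernel of the angular component, hence by a Hensel/rescaling argument $l_{i_0}$ can be rescaled by an element of $\overline K^*$ to be conjugation-invariant without disturbing its initial form. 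A conjugation-invariant linear form lies in $K[\bm x]$, and it still divides $f$ and has $\initial_{\bm w}(l_{i_0}) = \overline l$, completing the proof. The delicate point to get right is the interaction between rescaling by units of positive valuation and preserving the initial form, together with checking that $\mu$ with $\mu\sigma(\mu)=1$ and trivial angular component can be written as $\eta/\sigma(\eta)$; this is a cohomological triviality ($H^1$ of $\mathbf{Z}/2$ with coefficients in the relevant unit group) that I would verify by an explicit averaging argument, setting $\eta = 1 + \mu^{-1}$ when that is nonzero and handling the remaining case separately.
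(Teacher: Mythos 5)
Your proposal is correct and follows essentially the same route as the paper's proof: use Lemma~\ref{lem:taking initial forms respects products} to write $\initial_{\bm w}(f)=\prod_i\initial_{\bm w}(l_i)$, use unique factorization in $\kappa[\bm x]$ together with the multiplicity-$1$ hypothesis to single out a unique factor $l_{i_0}$ with $\initial_{\bm w}(l_{i_0})$ a scalar multiple of $\overline l$, and then use complex conjugation on $\overline K=K[\sqrt{-1}]$ and the uniqueness of $i_0$ to conclude $l_{i_0}^\iota$ differs from $l_{i_0}$ by a constant, so $l_{i_0}$ can be rescaled into $K[\bm x]$. The only substantive difference is stylistic: where the paper compresses the final descent into a single line (``after potentially scaling by a constant, we may thus assume $l_1^\iota=l_1$''), you spell out the underlying Hilbert~90 cocycle argument (setting $\eta=1+\mu$, noting $\angular(\mu)=1$ rules out $\mu=-1$), which is a valid and slightly more self-contained way to justify that step.
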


\begin{proof}
    We have $\initial_{\bm w}(f)=\prod_{i=1}^d\initial_{\bm w}(l_i)$ by Lemma~\ref{lem:taking initial forms respects products}.
    In particular, we may assume that after potentially renumbering and scaling by an appropriate element in $\overline K^*$, we have $\initial_{\bm w}(l_1)=\overline l$. It remains to show that $l_1\in K[\bm x]$. Let $\iota\colon \overline K\to \overline K$ denote complex conjugation. Then $f^\iota=f$, and therefore $l_1^\iota$ agrees with $l_j$ up to a constant factor for some $1\leq j\leq d$. It follows that $\initial_{\bm w}(l_j)$ and $\initial_{\bm w}(l_1)=\overline l$ differ by a constant. By the assumption that $\overline l$ divides $\initial_{\bm w}(f)$ with multiplicity $1$, we conclude that $j=1$. After potentially scaling by a constant, we may thus assume that $l_1^\iota=l_1$, that is that $l_1\in K[\bm x]$.
\end{proof}

\begin{proposition}
\label{prop:initial ideal gives multiplicity for product of linear forms, real case}
    Let $K$ be a valued real closed field with residue field $\kappa$. Suppose $f \in K[\bm x]$ factors as a product of linear forms $f = \prod_{i=1}^dl_i$ over the algebraic closure $\overline K=K[\sqrt{-1}]$ of $K$, and let $\bm w \in \R^n$. Moreover, let $l=1t^0+\sum s_it^{-w_i} x_i\in \TR[\bm x]$ for a choice of signs $s_i \in \S^*$. Assume that each factor of $\initial_{\bm w}(f)$ has multiplicity $1$. Then we have
    \[
        \mult_{\vsign^{-1}\{l\}}^K(f)=\mult_{\sgn^{-1}\{\initial_{\bm w}(l)\}}^\kappa(\initial_{\bm w}(f))
    \]
\end{proposition}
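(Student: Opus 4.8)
The plan is to mimic the argument of Proposition~\ref{prop:initial ideal gives multiplicity for product of linear forms} (the algebraically closed case), but carefully tracking signs via the signed valuation $\vsign$ rather than just the valuation $\nu$. First I would observe that $\mult^K_{\vsign^{-1}\{l\}}(f)$, by definition of the relative multiplicity and unique factorization in $K[\bm x]$, equals the number of indices $1 \le i \le d$ such that $l_i$ (after scaling so that its constant coefficient is $1$, which is possible for the factors having a constant term, and separately handling any factors with no constant term) satisfies $l_i^{\vsign} = l$. So the left-hand side counts the factors $l_i$ of $f$ lying over the prescribed enriched linear form $l \in \TR[\bm x]$.

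Next I would reduce the right-hand side similarly. By Lemma~\ref{lem:taking initial forms respects products} we have $\initial_{\bm w}(f) = \prod_{i=1}^d \initial_{\bm w}(l_i)$, and since $\kappa$ is a real closed field, $\mult^\kappa_{\sgn^{-1}\{\initial_{\bm w}(l)\}}(\initial_{\bm w}(f))$ counts the number of factors $\initial_{\bm w}(l_i)$ whose sign pattern is $\initial_{\bm w}(l)$ (again using unique factorization over $\kappa$, and using the hypothesis that every factor of $\initial_{\bm w}(f)$ has multiplicity $1$, so that distinct $\initial_{\bm w}(l_i)$ are genuinely distinct up to scalar and none collapse to a constant). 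So both sides are counting problems, and it suffices to set up a bijection between the relevant sets of indices. The key local claim is: for a single linear form $l_i$ (normalized appropriately), $l_i^{\vsign} = l$ in $\TR[\bm x]$ if and only if $\initial_{\bm w}(l_i)^{\sgn} = \initial_{\bm w}(l)$ in $\S[\bm x]$. The ``only if'' direction is a routine compatibility check between $\vsign$, $\initial_{\bm w}$, and $\sgn$, using that $\angular(\vsign(a)) = \sgn(a)$ and $\nu(\vsign(a)) = \nu(a)$ from Definition~\ref{def:signed valuation}. The ``if'' direction is where the multiplicity-$1$ hypothesis enters: a priori $\initial_{\bm w}(l_i)^{\sgn} = \initial_{\bm w}(l)$ only constrains the leading (valuation-minimizing) terms of $l_i$, so one must argue that no lower-order term of $l_i$ can appear that would change which monomials are ``visible'' after taking $\vsign$; here the conjugation argument from Lemma~\ref{lem:lifting real solutions of multiplicity 1} shows $l_i$ can in fact be taken in $K[\bm x]$, and the assumption on multiplicities pins down $l_i^{\vsign}$ exactly as the enriched form recording the support of $\initial_{\bm w}(l_i)$ together with the signs of $\angular$ of its coefficients, i.e.\ precisely $l$.

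Putting these together: the map $i \mapsto i$ gives a bijection between $\{i : l_i^{\vsign} = l\}$ and $\{i : \initial_{\bm w}(l_i)^{\sgn} = \initial_{\bm w}(l)\}$, so the two multiplicities agree. I would also invoke Lemma~\ref{lem:lifting real solutions of multiplicity 1} directly to guarantee that every factor $\initial_{\bm w}(l_i)$ over $\kappa$ with the right sign pattern actually lifts to a real factor $l_i \in K[\bm x]$ of $f$ over $\kappa$'s prescribed form, which is what makes the inequality "$\ge$" hold; the inequality "$\le$" is the easy direction coming from Lemma~\ref{lem:taking initial forms respects products} and the compatibility check. The main obstacle I anticipate is the bookkeeping around normalization of the linear factors (which ones have a constant term, scaling by elements of $\overline K^*$ versus $K^*$, and ensuring $\vsign$ is well-defined on the scaled factor) together with making the local claim airtight in the "if" direction—this is genuinely where the real-closedness of $\kappa$ and the multiplicity-$1$ hypothesis must be used in tandem, and sloppiness there would allow spurious factors with extra low-order terms. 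Everything else is assembling pieces already proved in the excerpt.
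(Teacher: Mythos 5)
Your proof is correct and follows the same route as the paper: the inequality ``$\le$'' comes from Lemma~\ref{lem:taking initial forms respects products} plus the compatibility of $\vsign$, $\initial_{\bm w}$, and $\sgn$, and the inequality ``$\ge$'' is Lemma~\ref{lem:lifting real solutions of multiplicity 1}. One small clarification on where the multiplicity-$1$ hypothesis actually bites, since you flag this as your anticipated obstacle: the equivalence ``$l_i^{\vsign}=l$ iff $\initial_{\bm w}(l_i)^{\sgn}=\initial_{\bm w}(l)$'' for a linear form $l_i \in K[\bm x]$ (normalized to constant term $1$) needs no multiplicity hypothesis at all---because $\initial_{\bm w}(l)=1+\sum s_ix_i$ has full support $\Delta_n$, equality of initial forms already forces every coefficient of $l_i$ to achieve the minimum, hence pins down the valuation and sign of every coefficient---so there is no ``lower-order term'' of a linear form that can hide; the hypothesis is needed only to rule out a factor $\overline l$ of $\initial_{\bm w}(f)$ in $\kappa[\bm x]$ arising from a conjugate pair $l_i,\bar l_i \in \overline K[\bm x]\setminus K[\bm x]$ with no $K$-rational factor of $f$ lying above it, which is precisely the content of Lemma~\ref{lem:lifting real solutions of multiplicity 1}.
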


\begin{proof}
 We have
    \[
        \initial_{\bm w}(f)=\prod_{i=1}^d \initial_{\bm w}(l_i) .
    \]
    As a linear form  $g\in K[\bm x]$ is contained in $K_{>0}\cdot\vsign^{-1}\{l\}$ if and only if $\initial_w(g)\in \sign^{-1} \{\initial_{\bm w}(l)\}$, it follows that 
    \[
\mult_{\vsign^{-1}\{l\}}^K(f)\leq \mult_{\sgn^{-1}\{\initial_{\bm w}(l)\}}^\kappa(\initial_{\bm w}(f)) .
    \]
    The reverse inequality follows directly from Lemma~\ref{lem:lifting real solutions of multiplicity 1}.
\end{proof}

\subsection{The geometric multiplicity}

Suppose we have a hyperfield with valuation, say $H \rtimes \R$. Given a polynomial $f$ over $H \rtimes \R$, the valuation creates a tropical hypersurface $V(f)$. If $f$ has a linear factor, then we will have a linear component in this tropical hypersurface as well. Specifically, as observed in Example~\ref{ex:initial form}, it is a direct consequence of Lemma~\ref{lem:additivity of PL functions} that for any linear form $l$ and polynomial $f$ we have
\[
V(f)=\mult_l(f)\cdot V(l)+V(g) 
\]
for some polynomial $g$. This warrants the following definition.

\begin{definition} \label{def:geometric-multiplicity}
    Let $V$ be a tropical hypersurface and let $\cL\subseteq (H \rtimes \R)[\bm x]$ be a subset consisting of polynomials of degree $1$ that are not monomials. Then we define the \emph{geometric multiplicity}, $\gmult^\K_\cL(V)$, of $V$ with respect to $\cL$ to be
    \[
    \gmult^\K_{\cL}(V) = \max \sum_{i=1}^k a_i
    \]
    with the maximum taken over all $k$ and all $a_i\in \Z_{\geq 0}$ such that
    \[
    W + \sum_{i=1}^k a_i V(l_i^\nu) = V
    \]
    for some tropical hypersurface $W$ and some $l_i\in \cL$. For $f\in (H \rtimes \R)[\bm x]$ we abbreviate $\gmult^\K_{\cL}(V(f))=\gmult^\K_{\cL}(f)$.
\end{definition}

\begin{example} \label{ex:geometric multiplicity strictly larger}
    \phantom{link}
    \begin{enumerate} [label=(\alph*)]
        \item Let $f=0+x+y+1x^3+1x^2y +2y^3\in \T[x,y]$. As we see from the Newton subdivision shown in Figure~\ref{fig:gmsl}, the vanishing locus $V(f)$ is a union of $2$ tropical lines, one of which centered at the origin and one at $(-0.5,-1)$. So if $l=0+x+y$, then $\gmult^\K_l(f)=1$. On the other hand, we claim that $\mult_l(f)=0$. Indeed, assume that 
        \[
        f\in l\cdot (a+bx+cy+dx^2+exy+fy^2) .
        \]
        By looking at the coefficients of the constant term, $x^3$, and $y^3$, we see that we need to have $a=0$, $d=1$, and $f=2$. Because the coefficients of $f$ at $x^2$, $y^2$, and $xy^2$ are infinite, we also need to have $b=1$, $c=2$, and $e=2$. But then the $xy$-coefficient of $f$ is contained in $2+3+3=\{2\}$, a contradiction.
        \item 
        Let $f = +0 - x + y \in \TR[x,y]$ and $l = +0 + x + y$. Then $\gmult^\K_l(f)=1$, but $\mult_l^{\TR}(f)=0$. \qedhere
    \end{enumerate}
\end{example}

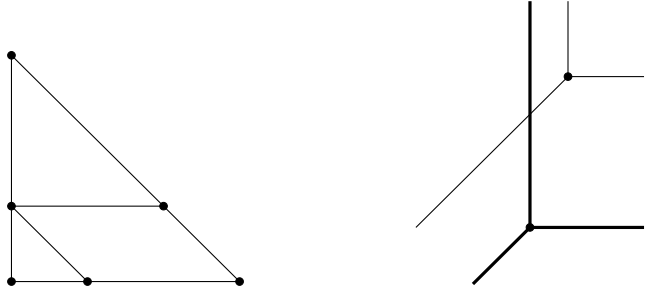
\begin{figure}[htb]
    \centering
    \begin{tikzpicture}
        \foreach \x/\y in {0/0, 1/0, 0/1, 3/0, 2/1, 0/3}
            \filldraw (\x, \y) circle (0.5mm);
        
        \draw (0,0) -- (3,0) -- (0,3) -- cycle;
        \draw (1,0) -- (0,1) -- (2,1);
    \end{tikzpicture}
    \hspace{2cm}
    \begin{tikzpicture}
        \filldraw (0, 0) circle (0.05);
        \draw (1, 0) -| (0, 1);
        \draw (0, 0) -- (-2, -2);
        \filldraw (-0.5, -2) circle (0.05);
        \draw[very thick] (1, -2) -| (-0.5, 1);
        \draw[very thick] (-0.5, -2) -- (-1.25, -2.75);
    \end{tikzpicture}
    \caption{Newton subdivision of $f = 0 + x + y + 1x^3 + 1x^2y + 2y^3$ and associated tropical curve $V(f)$.}
    \label{fig:gmsl}
\end{figure}

While both Example~\ref{ex:geometric multiplicity strictly larger} (a) and (b) show that the geometric multiplicity is, in general, larger than the multiplicity, the two examples are of a very different nature. Morally, in part (a) the reason for the discrepancy is that the vanishing locus of $f$ does not ``see'' all monomials of $f$ inside the Newton polytope, whereas in part (b) the reason is that the definition of geometric multiplicity of a polynomial over $H \rtimes \R$ only uses the valuation of the coefficients and does not use any information about $H$. To change this, we make the following definition.

\begin{definition}
    Let $H$ be a hyperfield. An \emph{$H$-enrichment} of a tropical hypersurface $A$ in $\R^n$, is an assignment of an element in $H^*$ to every connected component of $\R^n\setminus A$. Equivalently, it is a map $V\to H$, where $V$ is the set of vertices of the Newton subdivision corresponding to $A$. In particular, every $f\in (H \rtimes \R)[\bm x]$ induces an $H$-enriched tropical hypersurface $V(f)$.
    
    If $A$ and $B$ are two $H$-enriched tropical hypersurfaces, their sum $A+B$ is defined to have the sum of the underlying tropical hypersurfaces of $A$ and $B$ as the underlying tropical hypersurface, and the value of a connected component $C$ of $\R^n\setminus A+B$ is the product of the values of the connected components of $\R^n\setminus A$ and $\R^n\setminus B$ that contain $A$.
\end{definition}

\begin{remark}
    Enriched tropical hypersurfaces have also appeared in recent work of \cite{EnrichedTropicalIntersections} in the context of $\mathbf A^1$-geometry. In that setting, the components of the complement of a tropical hypersurface take values in the quotient hyperfield $k/(k^*)^2$ for some field $k$. 
\end{remark}

\begin{definition}
    An \emph{$H$-enriched tropical polynomial function} on $\R^n$ is a tropical polynomial function $f\colon \R^n\to \R$, together with an $H$-enrichment $s$ of $V(f)$.  The tropical product of two $H$-enriched tropical polynomial functions $(f,s)$ and $(g,s')$  is given by $(f+g,t)$, where $t$ is the enrichment of $V(f+g)$ obtained by adding the $H$-enriched hypersurfaces $(V(f),s)$ and $(V(g),t)$. Given a polynomial $f\in (H \rtimes \R)[\bm x]$ in $n$ variables, the polynomial function $\PF_{f^\nu}$ is naturally $H$-enriched: on each component $C$ of $\R^n\setminus V(f)$, a unique monomial, say $at^w x^{\bm m}$, of $f^\nu$ is minimized, and we assign to $C$ the value $a\in H^*$. We denote by $\PF_f$ the $H$-enriched polynomial function obtained this way.
\end{definition}  

\begin{lemma}
    \label{lem:additivity of enriched PL functions}
    Let $f,g\in (H \rtimes \R)[\bm x]$ and let $h\in f\cdot g$. Then 
    \[
        \PF_{h}=\PF_f\odot\PF_g
    \]
    as $H$-enriched tropical polynomial functions. In particular, we have
    \[
    V(h)=V(f)+V(g) .
    \]
\end{lemma}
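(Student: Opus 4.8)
The plan is to reduce to the already-established additivity of the underlying tropical polynomial functions and then read off the $H$-enrichment from initial forms. First I would apply Corollary~\ref{cor:morphism-factorization} to the valuation $\nu\colon H \rtimes \R\to\T$ to get $h^\nu\in f^\nu\cdot g^\nu$ in $\T[\bm x]$, so that Lemma~\ref{lem:additivity of PL functions} yields $\PF_{h^\nu}=\PF_{f^\nu}+\PF_{g^\nu}$. This already identifies the underlying tropical polynomial functions of $\PF_h$ and $\PF_f\odot\PF_g$, hence the underlying tropical hypersurfaces: $V(h)=V(f)+V(g)$, with underlying set $V(f)\cup V(g)$, so $\R^n\setminus V(h)=(\R^n\setminus V(f))\cap(\R^n\setminus V(g))$. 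It then remains only to check that the two $H$-enrichments of $V(h)$ agree, which is a statement about the value each assigns to a connected component $C$ of $\R^n\setminus V(h)$.

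Fix such a $C$ and a point $\bm w\in C$, and let $C_f\supseteq C$ and $C_g\supseteq C$ be the components of $\R^n\setminus V(f)$ and $\R^n\setminus V(g)$ containing it. The key observation is that for a point $\bm w$ lying in a component $C'$ of the complement of $V(p)$ (for $p\in(H \rtimes \R)[\bm x]$) the initial form $\initial_{\bm w}(p)$ is a single monomial, namely $\angular(\alpha)\,\bm x^{\bm m}$, where $\alpha\,\bm x^{\bm m}$ is the unique monomial of $p$ whose valuation part realizes $\PF_{p^\nu}(\bm w)$; by the definition of the $H$-enrichment, this $\angular(\alpha)$ is exactly the value $\PF_p$ assigns to $C'$. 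Applying this to $f$, $g$ and $h$ and using that $\initial_{\bm w}$ respects products (Lemma~\ref{lem:taking initial forms respects products}), we obtain
\[
    \initial_{\bm w}(h)\ \in\ \initial_{\bm w}(f)\cdot\initial_{\bm w}(g) ,
\]
whose right-hand side is a product of two monomials, hence the singleton whose element is the monomial with coefficient $[\text{value of }\PF_f\text{ on }C_f]\cdot[\text{value of }\PF_g\text{ on }C_g]$ and exponent the sum of the two exponents. Comparing this with the description of $\initial_{\bm w}(h)$ above forces the value of $\PF_h$ on $C$ to equal that product, which is precisely the value $\PF_f\odot\PF_g$ assigns to $C$.

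Running this over all components $C$ gives $\PF_h=\PF_f\odot\PF_g$, and forgetting enrichments recovers $V(h)=V(f)+V(g)$. The steps I expect to need the most care are, first, the bookkeeping that transports Lemma~\ref{lem:additivity of PL functions} and the explicit formula for a sum of tropical hypersurfaces into the identity $\R^n\setminus V(h)=(\R^n\setminus V(f))\cap(\R^n\setminus V(g))$ with matching component labels; and second, the unwinding of definitions required to verify that the initial form at a point of a complementary component is a single monomial whose $H$-coefficient is the enrichment value — this is essentially the statement, already used in the text, that the morphism $r\colon H \rtimes \R_{\ge 0}\to H$ annihilates all but the term of smallest valuation. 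Neither point is deep, but each must be made precise for the argument to close.
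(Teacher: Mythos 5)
Your proof is correct and follows essentially the same strategy as the paper's: reduce the statement about underlying polyhedral complexes to Lemma~\ref{lem:additivity of PL functions} and then verify componentwise that the $H$-enrichment of $h$ is the product of the enrichments of $f$ and $g$. The only difference is cosmetic — where the paper unpacks $h\in M_1M_2 + M_1g' + M_2f' + f'g'$ and compares polynomial functions directly, you route the same monomial comparison through Lemma~\ref{lem:taking initial forms respects products}, noting that $\initial_{\bm w}$ of a polynomial at a point off its hypersurface is a single monomial whose $H$-coefficient is the enrichment value.
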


\begin{proof}
    By Lemma~\ref{lem:additivity of PL functions}, we only need to show that the $H$-enrichments on both sides coincide. Let $C$ be a component of $\R^n\setminus V(h^\nu)$ and suppose the unique monomials of $f$ and $g$ that are minimized on $C$ are $M_1=at^{w_1}\bm x^{\bm m_1}$ and $M_2=bt^{w_2}{\bm x}^{\bm m_2}$, respectively. Let $f'$ and $g'$ be the polynomials obtained from $f$ and $g$ by omitting $M_1$ and $M_2$, respectively, then 
    \[
    h\in M_1M_2 +M_1g' + M_2f'+f'g' .
    \]
    By construction, we have for any point $\bm w \in C$ that $\PF_{f}(\bm w)=\PF_{M_1}(\bm w)<\PF_{f'}(\bm w)$ and $\PF_g(\bm w)=\PF_{M_2}(\bm w)<\PF_{g'}(\bm w)$. Therefore, 
    \[
    \PF_{M_1M_2}(\bm w)< \PF_{M_1g' + M_2f'+f'g'}(\bm w) ,
    \]
    from which we conclude that $M_1M_2$ is the unique monomial of $h$ minimized at $\bm w$ (and hence on $C$) and that the enrichment of $h$ on $C$ is given by $a\cdot b$, which is precisely the product of the enrichments of $f$ and $g$ there.

    The statement about hypersurfaces follows immediately from the statements about polynomial functions and the fact that $V(h^\nu)=V(f^\nu)+V(g^\nu)$.
\end{proof}

We can now define an enriched version of the geometric multiplicity, completely analogous to the geometric multiplicity.

\begin{definition}
\label{def:enriched geometric multiplicity}
Let $V$ be an $H$-enriched tropical hypersurface and let $\cL\subseteq (H \rtimes \R)[\bm x]$ be a subset consisting of linear forms. Then we define the \emph{$H$-enriched geometric multiplicity} $\gmult^H_\cL(V)$ of $V$ with respect to $\cL$ to be 
    \[
    \gmult^H_{\cL}(V) = \max \sum_{i=1}^k a_i
    \]
    with the maximum taken over all $k$ and all $a_i\in \Z_{\geq 0}$ such that
    \[
    W + \sum_{i=1}^k a_i V(l_i) = V
    \]
    for some $H$-enriched tropical hypersurface $W$ and some $l_i\in \cL$. For $f\in (H\rtimes \R)[\bm x]$ we abbreviate $\gmult^H_\cL(V(f))=\gmult^H_\cL(f)$.
\end{definition}

\begin{remark}
\label{rem:gmult over Krasner}
    Since $\K^*$ only consists of one element, tropical hypersurfaces and $\K$-enriched tropical hypersurfaces are equivalent. In particular, for $H=\K$ the definition of $\gmult^\K$ of Definition~\ref{def:enriched geometric multiplicity} agrees with the definition of $\gmult^\K$ from Definition~\ref{def:geometric-multiplicity}.
\end{remark}

\begin{lemma}
\label{lem:multipliciy bounded by geometric multiplicity}
Let $f\in (H \rtimes \Gamma)[\bm x]$ and let $\mc L\subseteq (H\rtimes\Gamma)[\bm x]$ be a set of polynomials of degree $1$ that are not monomials. Then we have
\[
\mult^{H\rtimes \Gamma}_{\mc L}(f)\leq \gmult^H_{\mc L}(f).    
\]
\end{lemma}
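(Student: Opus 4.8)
The plan is to reduce the statement to the additivity of enriched tropical polynomial functions (Lemma~\ref{lem:additivity of enriched PL functions}), in exactly the same way that earlier results in this subsection translate a factorization over $H\rtimes\Gamma$ into a geometric statement about tropical hypersurfaces. Concretely, suppose $\mult^{H\rtimes\Gamma}_{\mc L}(f)=m$; I want to produce a decomposition witnessing $\gmult^H_{\mc L}(f)\ge m$. If $\mc L$ contains a unit (a monomial of degree $0$, i.e.\ a nonzero constant), then $\mc L$ contains no degree-$1$ non-monomial, but by hypothesis every element of $\mc L$ has degree $1$ and is not a monomial, so this case does not arise and both sides behave consistently; more importantly, when $m=0$ there is nothing to prove, so assume $m\ge 1$.

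First I would unwind the definition of $\mult^{H\rtimes\Gamma}_{\mc L}(f)$: there is a chain of factorizations
\[
f \in g_1\cdot l_1,\quad g_1\in g_2\cdot l_2,\quad \dots,\quad g_{m-1}\in g_m\cdot l_m
\]
with each $l_i\in\mc L$, so that $f\in g_m\cdot l_1\cdots l_m$ after flattening (using associativity of hyperaddition in $H[\bm x]$, which is legitimate since $H\rtimes\Gamma$ is a hyperfield). Then I would apply Lemma~\ref{lem:additivity of enriched PL functions} repeatedly: from $f\in g_m\cdot l_1\cdots l_m$ it follows that, as $H$-enriched tropical hypersurfaces,
\[
V(f) = V(g_m) + \sum_{i=1}^m V(l_i).
\]
Here I should be slightly careful to induct on $m$ so that each application of Lemma~\ref{lem:additivity of enriched PL functions} is to a single product $h\in f'\cdot g'$; the iterated version follows by a routine induction, parallel to Lemma~\ref{lem:morphism-over-sum}. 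Since each $l_i\in\mc L$ is a degree-$1$ non-monomial, the identity above is exactly of the form appearing in Definition~\ref{def:enriched geometric multiplicity} with $k=m$, all $a_i=1$, and $W=V(g_m)$. Therefore $\gmult^H_{\mc L}(f)\ge m = \mult^{H\rtimes\Gamma}_{\mc L}(f)$.

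The only real subtlety—and the step I expect to need the most care—is the passage from the \emph{iterated} hyperfield factorization $f\in g_1\cdot l_1$, $g_1\in g_2\cdot l_2$, \dots, to a single containment $f\in g_m\cdot l_1\cdots l_m$ and then to the additive identity for $V(f)$. One cannot naively ``multiply out'' because products in $H[\bm x]$ are set-valued; the correct statement is that $f\in g_1\cdot l_1$ and $g_1\in g_2\cdot l_2$ together imply $f\in g_2\cdot(l_1\cdot l_2)$ (as sets, using associativity of the convolution product, which holds because $H\rtimes\Gamma$ is a hyperfield), and iterating gives $f\in g_m\cdot(l_1\cdots l_m)$. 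Strictly, the cleanest route is to avoid forming the product $l_1\cdots l_m$ altogether and instead induct directly: by the inductive hypothesis applied to $g_1$ (noting $\deg g_1 < \deg f$, or simply $\mult_{\mc L}(g_1) = m-1$), there is an $H$-enriched decomposition $V(g_1) = W' + \sum_{i=2}^m V(l_i)$; combining with $V(f)=V(g_1)+V(l_1)$ from a single use of Lemma~\ref{lem:additivity of enriched PL functions} yields $V(f)=W'+\sum_{i=1}^m V(l_i)$, as desired. This inductive formulation sidesteps the set-valued-product bookkeeping entirely and is the version I would write up.
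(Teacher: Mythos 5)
Your proof is correct and matches the paper's approach exactly: the paper's proof is the single sentence that the assertion ``is a direct consequence of Lemma~\ref{lem:additivity of enriched PL functions},'' and your induction on $m$---choosing $g_1\in(f:\mc L)$ with $\mult^{H\rtimes\Gamma}_{\mc L}(g_1)=m-1$ (which exists by Lemma~\ref{lem:hyperfield multiplicity of a set is a maximum}), applying Lemma~\ref{lem:additivity of enriched PL functions} once to get $V(f)=V(g_1)+V(l_1)$, and combining with the inductive decomposition of $V(g_1)$---is precisely the routine argument the paper is suppressing. Your inductive formulation is indeed the right way to avoid the set-valued bookkeeping; no gaps.
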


\begin{proof}
    The assertion is a direct consequence of Lemma~\ref{lem:additivity of enriched PL functions}.
\end{proof}

\begin{example}
    \begin{enumerate}[label=(\alph*)]
        \item []
        \item 
        As noted in Remark \ref{rem:gmult over Krasner}, geometric multiplicity and enriched geometric multiplicity coincide over $\K$. In particular, Example~\ref{ex:geometric multiplicity strictly larger}~(a)
        can be seen as an example where the enriched geometric multiplicity is strictly smaller than the multiplicity. Morally speaking, any discrepancy between the geometric multiplicity and (hyperfield) multiplicity in that example is entirely due to the valuations, replacing geometric multiplicity with enriched geometric multiplicity will not reduce the discrepancy.
        \item Let $f=0-x+y\in \TR$ and $l=0+x+y$, as in Example~\ref{ex:geometric multiplicity strictly larger}. Then $\gmult^\S_l(f)=\gmult^\K_l(f)=0$.
    \end{enumerate} 
\end{example}

\begin{lemma}
\label{lem:comparison between enriched and unenriched geometric multiplicity}
Let $V\subseteq \R^n$ be an $H$-enriched tropical hypersurface and let $l\in (H \rtimes \R)[\bm x]$ be a linear form. If $\gmult^\K_l(V)>1$, then $\gmult^H_l(V)\geq 1$. In particular, we either have $\gmult^H_l(V)=\gmult^\K_l(V)$ or $\gmult^H_l(V)=\gmult^\K_l(V)-1$.
\end{lemma}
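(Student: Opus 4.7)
The plan is to prove the two-sided inequality
\[
\gmult^\K_l(V) - 1 \leq \gmult^H_l(V) \leq \gmult^\K_l(V),
\]
from which both assertions of the lemma follow immediately: the ``if\dots then'' clause is the special case $\gmult^\K_l(V) \geq 2$ of the left inequality, and the ``in particular'' statement is simply the sandwich read off.  The upper bound is soft---given any $H$-enriched decomposition $V = W + k V(l)$, forgetting enrichments yields a decomposition of the underlying weighted tropical hypersurfaces of the same magnitude.

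For the lower bound, I would suppose $m := \gmult^\K_l(V) \geq 1$ and fix an unenriched decomposition $V = W_0 + m V(l^\nu)$ of underlying weighted complexes.  The key observation is that $W_0 + m V(l^\nu)$ and $W_0 + V(l^\nu)$ have the same support $|W_0| \cup V(l^\nu)$, and hence their complements in $\R^n$ share exactly the same connected components.  I would therefore take $W'$ to be the weighted complex $W_0 + V(l^\nu)$ equipped with the $H$-enrichment
\[
e_{W'}(C) = e_V(C) \cdot e_{V(l)}(\pi_l(C))^{-(m-1)},
\]
where $C$ ranges over these common complement components and $\pi_l(C)$ denotes the unique component of $\R^n \setminus V(l)$ containing $C$.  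This formula takes values in $H^*$ since both $e_V(C)$ and $e_{V(l)}(\pi_l(C))$ do.

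A direct computation, using the definition of the sum of $H$-enriched tropical hypersurfaces, then confirms that $W' + (m-1) V(l) = V$: the underlying weighted complex matches since $W_0 + V(l^\nu) + (m-1) V(l^\nu) = W_0 + m V(l^\nu)$, and the enrichment on each component $C$ evaluates to $e_{W'}(C) \cdot e_{V(l)}(\pi_l(C))^{m-1} = e_V(C)$.  Because $m - 1 \geq 0$, this witnesses $\gmult^H_l(V) \geq m - 1$.  The only real subtlety is the book-keeping between three polyhedral structures---$V$, $W_0 + V(l^\nu)$, and $V(l)$---to verify that the formula for $e_{W'}$ is well defined on the correct set of components; once this identification is in place, no further obstacle remains.
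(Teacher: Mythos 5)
Your proof is correct and takes essentially the same approach as the paper: both define an $H$-enrichment on the intermediate hypersurface by multiplying the enrichment of $V$ by a (negative) power of the enrichment of $V(l)$ pulled back along the inclusion of complement components, exploiting that $H^*$ is a group. The only structural difference is that you strip off all $m-1$ copies of $V(l)$ at once via the two-sided sandwich, whereas the paper strips one copy and invokes induction for the remainder.
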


\begin{proof}
    Let $W$ be the unique tropical hypersurface  with $W+V(l^\nu)=V$ as tropical hypersurfaces. Because $\gmult^\K_l(V)>1$, we have $V(l^\nu)\subseteq W$, and hence $\R^n\setminus V=\R^n\setminus W$. Denote by $s$ and $t$ the enrichments of $V$ and $V(l)$, respectively. Let $C$ be a component of $\R^n\setminus W$ and let $C'$ be the unique component  of $\R^n\setminus V(l^\nu)$ containing $C$. Then we can enrich $W$ by assigning to $C$ the element $s(C)\cdot t(C')^{-1}\in H^*$. By construction, we then have $W+V(l^\nu)=V$ as enriched tropical hypersurfaces. This shows that $\gmult^H_l(V) \geq 1$. The remainder of the assertion follows by induction.
\end{proof}

\begin{definition}
\label{def:strictly convex}
    We call a polynomial $f\in (H \rtimes \R)[\bm x]$ \emph{strictly convex} if $f^\nu\in \T[\bm x]$ is strictly convex.
\end{definition}

\begin{proposition}
\label{prop:multiplicity in strictly convex case}
    Let $\Gamma$ be a subgroup of $\R$, let $H$ be a hyperfield, let $f \in (H \rtimes \Gamma)[\bm x]$ be a dense strictly convex polynomial, and let $l\in  (H \rtimes \Gamma)[\bm x]$ be a degree-$1$ polynomial that is not a monomial and such that $\gmult^H_l(f)>0$. Then there exists a unique polynomial $g \in (H \rtimes \Gamma)[\bm x]$ with and $f\in g\cdot l$ and in fact $g$ is dense, strictly convex, and we have $\{f\}=g\cdot l$. 
\end{proposition}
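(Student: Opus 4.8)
The plan is to build $g$ directly from the enriched tropical geometry of $f$, read off that it is strictly convex and dense, and then use that the tropical product of enriched polynomial functions is again a polynomial function (Lemma~\ref{lem:additivity of enriched PL functions}) to pin down $g\cdot l$ exactly. \emph{Construction:} since $\gmult^H_l(f)>0$, Definition~\ref{def:enriched geometric multiplicity} (with $k=1$, $a_1=1$) provides an $H$-enriched tropical hypersurface $W$ with $W+V(l)=V(f)$; as in the proof of Lemma~\ref{lem:comparison between enriched and unenriched geometric multiplicity}, $W$ is uniquely determined, its underlying hypersurface being the unique one with $W+V(l^\nu)=V(f^\nu)$. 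Because $V(f^\nu)\ge V(l^\nu)$, the function $P\coloneqq\PF_{f^\nu}-\PF_{l^\nu}$ is again a tropical polynomial function with $V(P)=W$ (cf.\ the discussion preceding Definition~\ref{def:geometric-multiplicity}); together with the enrichment of $W$ it is an $H$-enriched tropical polynomial function, and $P\odot\PF_l=\PF_f$, an identity which determines $P$. Let $g\in(H\rtimes\R)[\bm x]$ be the unique strictly convex polynomial with $\PF_g=P$, obtained by recording, for each essential exponent $\bm b$ of $P$, the linear piece $c_{\bm b}+\langle\bm b,\cdot\rangle$ and the enrichment value $h_{\bm b}\in H^*$ on the corresponding chamber and setting $g=\sum h_{\bm b}t^{c_{\bm b}}\bm x^{\bm b}$. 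Then $g$ is strictly convex by construction, and additivity of Newton polytopes under tropical products (Lemma~\ref{lem:additivity of PL functions}, using strict convexity of $f$ and of $l$) gives $\Newt(l)+\Newt(g)=\Newt(f)$.

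\emph{Density of $g$} is the heart of the matter. We must show every lattice point of $\Newt(g)$ is essential for $P$, i.e.\ a vertex of the Newton subdivision of $\Newt(g)$. Suppose some $\bm b_0\in\Newt(g)\cap\Z^n$ lies in the relative interior of a cell $\tau$ of this subdivision with $\dim\tau\ge 1$. Since $l$ has degree $1$ and is not a monomial, $\supp(l)$ is a set of vertices of the standard simplex, $\Newt(l)$ is the simplex they span, $l^\nu$ is strictly convex, and the Newton subdivision of $\Newt(l)$ is trivial, so the closed chambers $N_{\bm s}$ ($\bm s\in\supp(l)$) of $\R^n\setminus V(l^\nu)$ cover $\R^n$. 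The locus $N_\tau$ where $\tau$ is the minimal face of $\PF_g$ is relatively open, nonempty, of dimension $n-\dim\tau$, and is covered by the $N_{\bm s}$, so some $\bm s^*\in\supp(l)$ has $N_{\bm s^*}\cap N_\tau$ of full dimension $n-\dim\tau$. Then the Newton subdivision of $\Newt(f)=\Newt(l)+\Newt(g)$ induced by $\PF_{f^\nu}=\PF_{l^\nu}+P$ has $\{\bm s^*\}+\tau$ as a cell of dimension $\dim\tau\ge 1$, and $\bm s^*+\bm b_0$ is a lattice point in its relative interior, hence not a vertex of the subdivision. This contradicts that $f$ is dense and strictly convex, for then every lattice point of $\Newt(f)$ is essential. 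Therefore $\Newt(g)\cap\Z^n=\supp(g)$, i.e.\ $g$ is dense.

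\emph{The product and uniqueness.} For any $h\in g\cdot l$, Lemma~\ref{lem:additivity of enriched PL functions} gives $\PF_h=\PF_g\odot\PF_l=P\odot\PF_l=\PF_f$, so $h$ and $f$ carry the same essential monomials; as $f$ is strictly convex, $h$ and $f$ agree on $\supp(f)$. For $\bm p\notin\Newt(l)+\Newt(g)=\Newt(f)$ there is no way to write $\bm p=\bm a+\bm b$ with $\bm x^{\bm a}$ a monomial of $l$ and $\bm x^{\bm b}$ a monomial of $g$, forcing the $\bm p$-coefficient of $h$ to vanish; hence $\supp(h)\subseteq\Newt(f)\cap\Z^n=\supp(f)$ by density of $f$, and so $h=f$. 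Since $g\cdot l\ne\emptyset$ we conclude $g\cdot l=\{f\}$, in particular $f\in g\cdot l$. Inspecting the coefficient of $\bm x^{\bm s+\bm b}$ of $f$, which equals $a^l_{\bm s}\,h_{\bm b}t^{c_{\bm b}}$ by uniqueness of the decomposition at an essential exponent $\bm b$ and a suitable vertex $\bm s$ of $\Newt(l)$, shows $h_{\bm b}t^{c_{\bm b}}\in(H\rtimes\Gamma)^*$, so in fact $g\in(H\rtimes\Gamma)[\bm x]$. Finally, if $g'\in(H\rtimes\Gamma)[\bm x]$ also satisfies $f\in g'\cdot l$, then $\PF_{g'}\odot\PF_l=\PF_f=P\odot\PF_l$; cancelling $\PF_l$ (valid since $H^*$ is a group and underlying polynomial functions cancel) gives $\PF_{g'}=P$, so $g'$ and $g$ have the same essential monomials and the same Newton polytope, and since $g$ is dense, $g'$ has no further monomials, whence $g'=g$.

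\emph{Main obstacle.} The only genuinely non-formal step is the density of $g$; everything else is bookkeeping with enriched polynomial functions. The key geometric input is that the Newton subdivision of a non-monomial linear form is always trivial, so that the subdivision of $\Newt(f)$ is a translate of the subdivision of $\Newt(g)$ by the vertices of $\Newt(l)$, and a non-vertex lattice point of $\Newt(g)$ would produce a non-vertex lattice point of $\Newt(f)$, contradicting denseness and strict convexity of $f$. This density is also exactly what forces the quotient $g$ to be unique.
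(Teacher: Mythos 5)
Your overall strategy matches the paper's: build $g$ from the tropical quotient $\PF_{f^\nu}-\PF_{l^\nu}$, use Lemma~\ref{lem:additivity of enriched PL functions} plus density and strict convexity of $f$ to force $g\cdot l=\{f\}$, and then derive uniqueness. However, your density argument has a genuine gap. You choose $\bm s^*\in\supp(l)$ so that $N_{\bm s^*}\cap N_\tau$ has full dimension $n-\dim\tau$, and then assert that $\{\bm s^*\}+\tau$ is a cell of the Newton subdivision of $\Newt(f)$. This only follows if $N_\tau$ meets the \emph{interior} of $N_{\bm s^*}$. But the $N_{\bm s}$ are closed, so full-dimensional intersection is also possible when $N_\tau$ lies entirely inside $V(l^\nu)$ (for instance when a cell of $V(g^\nu)$ coincides with a ray of the tropical line $V(l^\nu)$). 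In that case the cell of $\Newt(f)$ dual to a generic point of $N_\tau$ is $\sigma+\tau$, where $\sigma\subseteq\Newt(l)$ is the positive-dimensional cell dual to the stratum of $V(l^\nu)$ containing $N_\tau$, not $\{\bm s^*\}+\tau$ — so the displayed claim fails as stated.

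The gap is repairable: for any vertex $\bm s^*$ of $\sigma$, the point $\bm s^*+\bm b_0$ is still a lattice point of $\sigma+\tau$ that cannot be a vertex of $\sigma+\tau$ (if it were, it would a fortiori be a vertex of the convex subset $\{\bm s^*\}+\tau$, making $\bm b_0$ a vertex of $\tau$, contrary to assumption), so density and strict convexity of $f$ are still violated. This is exactly how the paper argues — it works with the cell $P+Q$ where $Q$ is the cell of $\Newt(l)$ dual to the stratum of $V(l)$ containing the relevant vertex of $V(g')$, without assuming $Q$ is a point. You should also note that your construction of $g$ silently asserts $g\in(H\rtimes\R)[\bm x]$ rather than the a priori Laurent polynomial ring $(H\rtimes\R)[\bm x^{\pm 1}]$; this needs a short argument (the paper uses that $l$, being a non-monomial of degree $1$, has $x_i$-order $0$ for every $i$, so the $x_i$-order of $g$ equals that of $f$, which is $\ge 0$).
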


\begin{proof}
    Let $W$ be an enriched tropical hyperplane such that $W+V(l)=V(f)$ and let $g\in (H \rtimes \Gamma)[\bm x^{\pm 1}]$ with $V(g)=W$. Then $V(\PF_g\odot \PF_l)=W+V(l)=V(\PF_f)$ and therefore $\PF_g\odot\PF_l$ and $\PF_f$ differ by a linear function. After multiplying $g$ by a suitable monomial, we may thus assume that $\PF_g\odot \PF_l=\PF_f$. 
    For every $h\in g\cdot l$, we have $\PF_h=\PF_f$ by Lemma~\ref{lem:additivity of enriched PL functions}. But since $f$ is dense and strictly convex this is only possible if $f=h$. 
    We conclude that $g\cdot l=\{f\}$. 

    Now let $g'\in (H \rtimes \Gamma)[\bm x^{\pm 1}]$ with $f\in g'\cdot l$. We will first show that $g'$ is strictly convex. Let $P$ be a maximal polytope in the Newton subdivision of $g$. It corresponds to some vertex $\bm p$ of $V(g)$. Let $Q$ be the polytope in the Newton subdivision of $l$, corresponding to the stratum of $V(l)$ containing $\bm p$. Then the polytope in the Newton subdivision of $f$ corresponding to $\bm p$ is given by the Minkowski sum $P+Q$. Let $\bm v$ be a vertex of $Q$ and let $\bm w$ be a lattice point contained in $P$. Then $\bm w + \bm v$ is a lattice point of $P+Q$. Because $f$ is dense and strictly convex, this implies that $\bm w+\bm v$ is a vertex of $P+Q$ and hence a vertex of $P+\bm v$. Therefore, $\bm w$ is a vertex of $P$. We conclude that every lattice point in the Newton polytope of $g'$ is a vertex of the Newton subdivision of $g'$, which implies that $g'$ is dense and strictly convex. We can now show that $g'=g$. Because 
    \[
    \PF_g\odot \PF_l =\PF_f=\PF_{g'}\odot \PF_l ,
    \]
    we have $\PF_g=\PF_{g'}$. But by what we just showed, both $g$ and $g'$ are strictly convex and hence uniquely determined by their enriched polynomial functions. We conclude that $g=g'$.

    Finally, note that $l$ has order $0$ with respect to each of the variables $x_i$. Therefore, the order of $g$ coincides with the order of $f$ with respect to each of the variables $x_i$. It follows that $g$ is a polynomial, that is $g\in (H \rtimes \Gamma)[\bm x]$.
\end{proof}

\begin{corollary}
    \label{cor:enriched multiplicity equals multiplicity in strictly convex case}
    Let $\Gamma$ be a subgroup of $\R$, let $H$ be a hyperfield, and let $f \in (H \rtimes \Gamma)[\bm x]$ be a dense strictly convex polynomial. Moreover, let $\mc L\subseteq (H\rtimes\Gamma)[\bm x]$ be a set of degree-$1$ polynomials not containing a monomial. Then we have
    \[
    \gmult^H_{\mc L}(f)=\mult^{H \rtimes \Gamma}_{\mc L}(f) .
    \]
\end{corollary}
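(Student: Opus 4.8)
The plan is to invoke Lemma~\ref{lem:multipliciy bounded by geometric multiplicity} for the inequality $\mult^{H\rtimes\Gamma}_{\mc L}(f)\le\gmult^H_{\mc L}(f)$ and then prove the reverse inequality $\gmult^H_{\mc L}(f)\le\mult^{H\rtimes\Gamma}_{\mc L}(f)$ by induction on $N\coloneqq\gmult^H_{\mc L}(f)$, over all dense strictly convex $f$ simultaneously. (Here $N<\infty$: in a decomposition $W+\sum a_iV(l_i)=V(f)$ the Newton-degrees add, so $N\le\deg f$.) The case $N=0$ is immediate, since multiplicities are non-negative.

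For the inductive step, I would fix a decomposition $W+\sum_{i=1}^k a_iV(l_i)=V(f)$ of $H$-enriched tropical hypersurfaces witnessing $\gmult^H_{\mc L}(f)=N$, with $l_i\in\mc L$ and $\sum_i a_i=N\ge 1$, and choose an index $i_0$ with $a_{i_0}\ge 1$; set $l\coloneqq l_{i_0}$. Grouping the remaining terms into a single $H$-enriched tropical hypersurface shows $\gmult^H_l(f)\ge a_{i_0}>0$, so Proposition~\ref{prop:multiplicity in strictly convex case} produces a dense strictly convex $g\in(H\rtimes\Gamma)[\bm x]$ with $g\cdot l=\{f\}$; in particular $f\in g\cdot l$, whence $V(f)=V(g)+V(l)$ as $H$-enriched tropical hypersurfaces by Lemma~\ref{lem:additivity of enriched PL functions}. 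Comparing this with the displayed decomposition and cancelling one copy of $V(l)$ gives
\[
    W+(a_{i_0}-1)V(l)+\sum_{i\ne i_0}a_iV(l_i)=V(g),
\]
so $\gmult^H_{\mc L}(g)\ge N-1$, and hence $\mult^{H\rtimes\Gamma}_{\mc L}(g)\ge N-1$ by the induction hypothesis. Finally $g\in(f:\mc L)$ (since $f\in g\cdot l$ and $l\in\mc L$ is not a unit), the degree is bounded on $(f:\mc L)$ (because $f\in h\cdot l'$ with $l'\in\mc L$ forces $\Newt(h)+\Newt(l')=\Newt(f)$, hence $\deg h<\deg f$), and $\mc L$ contains no unit; so Definition~\ref{def:hyperfield multiplicity} together with Lemma~\ref{lem:hyperfield multiplicity of a set is a maximum} gives
\[
    \mult^{H\rtimes\Gamma}_{\mc L}(f)=1+\mult^{H\rtimes\Gamma}_{\mc L}\bigl((f:\mc L)\bigr)\ge 1+\mult^{H\rtimes\Gamma}_{\mc L}(g)\ge N,
\]
completing the induction.

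The one step that genuinely needs care — and the main obstacle — is the cancellation of $V(l)$ used above, i.e.\ the fact that $H$-enriched tropical hypersurfaces form a \emph{cancellative} commutative monoid under addition. I would record this as a separate lemma: the underlying unenriched tropical hypersurfaces are cancellative because each one determines its tropical polynomial function up to an affine-linear summand and the operation of adding a fixed affine-linear function is a bijection on piecewise-linear functions; and, once the underlying hypersurfaces are known to agree, the enrichments must agree because they can be compared component by component on the complement, where their values lie in the group $H^*$. With this lemma in hand, the bookkeeping in the induction — in particular matching up which component of which complement an enrichment value is read off — becomes routine, and all the remaining verifications are immediate.
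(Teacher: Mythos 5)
Your proposal is correct and takes essentially the same approach as the paper: induct on $\gmult^H_{\mc L}(f)$, use Proposition~\ref{prop:multiplicity in strictly convex case} to peel off one factor $l$, and identify $V(g)$ with the remaining enriched hypersurface so that the induction hypothesis applies to $g$. The only real difference is that you make explicit the cancellation property of enriched tropical hypersurfaces under addition, which the paper uses silently in the step ``$V(f)=V(g)+V(l)$ \dots\ and hence $V(g)=W$''; your justification of that cancellation (via determination of the polynomial function up to an affine-linear summand, then comparing enrichments component-by-component in the group $H^*$) is sound and a worthwhile thing to have written down.
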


\begin{proof}
    By Lemma~\ref{lem:multipliciy bounded by geometric multiplicity}, we need to show that
    \[
    \gmult^H_{\mc L}(f)\leq \mult^{H \rtimes \Gamma}_{\mc L}(f) .
    \]
    We do induction on $n=\gmult^H_{\mc L}(f)$, the base case $n=0$ being trivial. For $n>0$, there exists an $H$-enriched tropical hypersurface $W$ and a polynomial $l\in \mc L$  with $\gmult^H_{\mc L}(W)=n-1$ and $W+V(l)=V(f)$. In particular $\gmult^H_l(f)>0$. By Proposition~\ref{prop:multiplicity in strictly convex case}, there exists a dense strictly convex polynomial $g\in (H\rtimes \Gamma)[\bm x]$ with $f\in g\cdot l$. In particular, we have $V(f)=V(g)+V(l)$ by Lemma~\ref{lem:additivity of enriched PL functions} and hence $V(g)=W$. Using the induction hypothesis, we conclude that
    \[
    \gmult^H_{\mc L}(f)=1+\gmult^H_{\mc L}(g)\leq 1+\mult^{H\rtimes\Gamma}_{\mc L} (g)\leq \mult^{H\rtimes\Gamma}_{\mc L}(f) . \qedhere
    \]
\end{proof}

\subsection{Relative hyperfield multiplicity}

\begin{definition}
\label{def:relative multiplicity}
    Let $\varphi\colon H_1\to H_2$ be a morphism of hyperfields and let $\emptyset\neq \mc F, \mc L\subseteq H_2[\bm x]$ such that the degree is bounded on $F$. The \emph{relative multiplicity of $\mc F$ at $\mc L$ with respect to $\varphi$}, denoted by $\mult^\varphi_{\mc L}(\mc F)$, is given by
    \[
        \mult^\varphi_{\mc L}(\mc F)= \mult_{\varphi^{-1}{\mc L}}^{H_1}(\varphi^{-1}\mc F) .
    \]
\end{definition}

\begin{proposition}
\label{prop:comparison relative multiplicity and hyperfield multiplicity}
    Let $\varphi\colon H_1\to H_2$ be a morphism of hyperfields and let $\emptyset\neq \mc F,\mc L\subseteq H_2[\bm x]$ such that the degree is bounded on $\mc F$. Then we have
    \[
    \mult^\varphi_{\mc L}(\mc F)\leq \mult^{H_2}_{\mc L}(\mc F).
    \]
\end{proposition}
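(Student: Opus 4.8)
The plan is to obtain this as an essentially immediate consequence of Lemma~\ref{lem:multiplicities and morphisms}. Unwinding Definition~\ref{def:relative multiplicity}, we have $\mult^\varphi_{\mc L}(\mc F)=\mult^{H_1}_{\varphi^{-1}\mc L}(\varphi^{-1}\mc F)$, and the coefficientwise map $\Phi\colon H_1[\bm x]\to H_2[\bm x]$, $f\mapsto f^\varphi$, is a diagonal transformation in the sense of Definition~\ref{def:poly-morphism} (the map of Definition~\ref{def:fphi} composed with the identity monomial substitution). So I would apply Lemma~\ref{lem:multiplicities and morphisms} to $\Phi$ and to the sets $\varphi^{-1}\mc L$ and $\varphi^{-1}\mc F$.

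First I would clear away the degenerate cases. Since a hyperfield morphism sends nonzero elements to nonzero elements (from $\varphi(a)\varphi(a^{-1})=\varphi(1)=1$), applying $\varphi$ to every coefficient preserves supports, hence degrees; thus the degree is bounded on $\varphi^{-1}\mc F$ whenever it is on $\mc F$, and $\varphi^{-1}\mc L$ contains a unit only if $\mc L$ does. If $\mc L$ contains a unit, the right-hand side is $\infty$ and we are done; likewise if $0\in\mc F$, since $0\in 0\cdot l$ for every $l\in\mc L$ forces the iterated colon operation $(\,\cdot:\mc L)$ never to lose $0$, so again $\mult^{H_2}_{\mc L}(\mc F)=\infty$. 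If $\varphi^{-1}\mc F$ or $\varphi^{-1}\mc L$ is empty, the left-hand side is $0$. So it remains to treat the case where both preimages are nonempty, neither $\mc L$ nor $\varphi^{-1}\mc L$ has a unit, and $0\notin\mc F$ (so that $\Phi(\varphi^{-1}\mc F)$ does not contain the zero polynomial, which is exactly the hypothesis Lemma~\ref{lem:multiplicities and morphisms} requires).

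In that case Lemma~\ref{lem:multiplicities and morphisms} gives
\[
\mult^\varphi_{\mc L}(\mc F)=\mult^{H_1}_{\varphi^{-1}\mc L}(\varphi^{-1}\mc F)\le\mult^{H_2}_{\Phi(\varphi^{-1}\mc L)}(\Phi(\varphi^{-1}\mc F)).
\]
Since $\Phi(\varphi^{-1}\mc L)\subseteq\mc L$ and $\Phi(\varphi^{-1}\mc F)\subseteq\mc F$, I would finish by invoking that the hyperfield multiplicity $\mult^{H_2}_{\mc L}(\mc F)$ does not decrease if $\mc F$ or $\mc L$ is replaced by a larger set. Monotonicity in the first argument is already contained in the proof of Lemma~\ref{lem:hyperfield multiplicity of a set is a maximum}; monotonicity in the second argument I would prove by an essentially identical induction on $\mult^{H_2}_{\mc L}(\mc F)$, using that $\mc L'\subseteq\mc L$ yields $(\mc F:\mc L')\subseteq(\mc F:\mc L)$ and that a unit of $\mc L'$ is also a unit of $\mc L$. (When $\varphi$ is surjective---e.g.\ $\varphi=\sgn$, or $\varphi$ a valuation, which are the cases used in the paper---one in fact has $\Phi(\varphi^{-1}\mc L)=\mc L$ and $\Phi(\varphi^{-1}\mc F)=\mc F$, so this last step is unnecessary and the proposition is literally a restatement of Lemma~\ref{lem:multiplicities and morphisms}.)

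I do not expect a genuine obstacle here: all the mathematical content already lives in Lemma~\ref{lem:multiplicities and morphisms}, and below it in Corollary~\ref{cor:morphism-factorization} (morphisms of hyperfields respect factorizations of polynomials). What has to be done carefully is only the bookkeeping flagged above---the preimage sets may be empty or may contain units, and one needs the elementary monotonicity of the hyperfield multiplicity in each of its two set arguments.
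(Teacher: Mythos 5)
Your proof is correct and takes essentially the same route as the paper, which simply says the proposition "follows immediately from Lemma~\ref{lem:multiplicities and morphisms} applied to the morphism $H_1[\bm x]\to H_2[\bm x]$ induced by $\varphi$." Your additional bookkeeping---handling empty preimages, the cases with a unit or the zero polynomial, and the monotonicity of $\mult^{H_2}$ in both set arguments needed when $\varphi$ is not surjective---is exactly the unstated content behind the paper's "immediately."
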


\begin{proof}
    This is follows immediately from Lemma~\ref{lem:multiplicities and morphisms} applied to the morphism $H_1[\bm x]\to H_2[\bm x]$ induced by $\varphi$.
\end{proof}

\begin{example}
\label{ex:relative multiplicty can be way off}
\begin{enumerate}[label=(\alph*)]
    \item []
    \item Let $K$ be a field and let $\nu_0\colon K \to \K$ be the trivial valuation. Let $d\in \Z_{>0}$ be coprime to the characteristic of $K$, and let $f=1+x^d+y^d$ and $l=1+x+y$ be elements in $\K[x,y]$. We have already seen in Example~\ref{ex:multiplicity over Krasner hyperfield} that $\mult^\K_l(f)=d$. To compute the relative multiplicity with respect to $\nu_0$, let $g=a+bx^d+cy^d\in K[x,y]$ be any polynomial with $g^{\nu_0}=f$. Since $a+cy^d$ has only simple roots, Eisenstein's criterion, applied with respect to any prime factor of $a+cy^d$, shows that $g$ is irreducible. We conclude that 
\[
\mult_{\nu_0^{-1}\{l\}}^K(g)=\begin{cases}
    1 &\text{if }d=1, \\
    0 &\text{else},
\end{cases}
\]
and therefore
\[
\mult_l^{\nu_0}(f)=\begin{cases}
    1 &\text{if }d=1, \\
    0 &\text{else}.
\end{cases}
\]
\item We keep the setting of part (a), but instead take $f=\sum_{\vert m\vert \leq d}x^{\bm m}$. If $K$ is infinite, then for $d$ generic linear forms $l_1,\ldots, l_d\in \nu_0^{-1}\{l\}$ we have  $\left(\prod_{i=1}^d l_i\right)^{\nu_0}=f$, and hence $\mult^{\nu_0}_l(f)=\mult^\K_l(f)=d$. If the field $K$ is finite, things are more complicated. For example, if $K=\mathbf F_2$ and $d=2$, then  $\mult^{\nu_0}_l(f)=0$. \qedhere
\end{enumerate}
\end{example}

\begin{example}
\label{ex:hyperfield multiplicity larger than relative multiplicity}
    For the morphism $\sgn\colon \R\to \S$, the hyperfield multiplicity can be strictly larger than the relative hyperfield multiplicity, even for dense polynomials. 
Consider the polynomial
\[
f=    \begin{tikzpicture}[baseline=(current bounding box.center)]
        \matrix[matrix of math nodes]{
        + &   &   &  \\
        - & + &   &  \\
        + & - & - &  \\
        + & - & + & +\\
        };
    \end{tikzpicture} \in
    \begin{tikzpicture}[baseline=(current bounding box.center)]
        \matrix[matrix of math nodes]{
        + &   \\
        + & + \\
        };
    \end{tikzpicture} \cdot
    \begin{tikzpicture}[baseline=(current bounding box.center)]
        \matrix[matrix of math nodes]{
        + &   &     \\
        - & - &     \\
        + & - & +   \\
        };
    \end{tikzpicture}.
\]
The given factorization of $f$ is the unique way to factor out $l = 1+x+y$, so we see that $\mult^\S_l(f)=\bmult^\S_l(f)=1$. However, there exists no degree-$2$ polynomial $g\in \R[x,y]$ such that $(1+x+y)g$ has the given sign pattern. Assume on the contrary that such $g$ existed. We may assume that $g(0,0)=$, and write $g(x,y)=1-ax-by+cx^2-dxy+ey^2$, where $a,b,c,d,e$ are positive reals. Then we have
\begin{multline*}
(1+x+y)g(x,y)=1+(1-a)x+ (1-b)y+(c-a)x^2+ (-a-b-d)xy+ (e-b)y^2 + \\
+ cx^3+ (c-d)x^2y+ (-d+e)xy^2+ey^3 .
\end{multline*}
This product having the signs of $f$ is equivalent to
\begin{align*}
    1& <a &
    1 &> b \\
    c& > a &
    e &< b\\
    c& <d  &
    e& > d,
\end{align*}
from which we obtain a chain
\[
1 < a < c < d < e < b < 1.
\] 
A contradiction!
\end{example}

\begin{proposition}
\label{prop:relative multiplicity in strictly convex case}
    Let $K$ be a field, $H$ a hyperfield, $\Gamma\subseteq \R$ a totally ordered group, and let $\varphi\colon K\to H \rtimes \Gamma$ be a surjective morphism of hyperfields. Moreover, let $f\in  (H \rtimes \Gamma)[\bm x]$ be a dense strictly convex polynomial, and let $\mc L\subseteq (H \rtimes \Gamma)[\bm x]$ be a set of polynomials of  Newton-degree $1$. Then we have
    \[
    \mult^\varphi_{\mc L}(f)=\mult^{H \rtimes \Gamma}_{\mc L}(f) .
    \]
\end{proposition}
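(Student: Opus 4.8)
The plan is to combine the previously established inequalities into a chain of equalities. By Proposition~\ref{prop:comparison relative multiplicity and hyperfield multiplicity} applied to the morphism $\varphi$, we already have $\mult^\varphi_{\mc L}(f)\leq \mult^{H\rtimes\Gamma}_{\mc L}(f)$, so the entire content is the reverse inequality. For that, I would chase the following sandwich:
\[
\mult^{H\rtimes\Gamma}_{\mc L}(f)\;\overset{(1)}{\leq}\;\gmult^H_{\mc L}(f)\;\overset{(2)}{=}\;\mult^{H\rtimes\Gamma}_{\mc L}(f)\;\overset{(3)}{\leq}\;\mult^\varphi_{\mc L}(f),
\]
where $(1)$ is Lemma~\ref{lem:multipliciy bounded by geometric multiplicity} and $(2)$ is Corollary~\ref{cor:enriched multiplicity equals multiplicity in strictly convex case} (both applicable since $f$ is dense and strictly convex and $\mc L$ consists of degree-$1$ polynomials that are not monomials — note Newton-degree $1$ rules out monomials). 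So the real work is step $(3)$: producing, from a factorization witnessing $\mult^{H\rtimes\Gamma}_{\mc L}(f)=m$, an actual factorization over $K$ of a lift of $f$ that factors out $m$ elements of $\varphi^{-1}\mc L$.

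The key tool is Proposition~\ref{prop:multiplicity in strictly convex case}: whenever $\gmult^H_l(f)>0$ for some $l\in\mc L$, there is a \emph{unique} dense strictly convex $g$ with $f\in g\cdot l$, and moreover $g\cdot l=\{f\}$ exactly. Iterating, a witnessing chain for $\mult^{H\rtimes\Gamma}_{\mc L}(f)=m$ produces a sequence $f=f_0, f_1,\ldots, f_m$ of dense strictly convex polynomials and $l_1,\ldots,l_m\in\mc L$ with $\{f_{i-1}\}=f_i\cdot l_i$, hence $\{f\}=f_m\cdot l_1\cdots l_m$ (using that hyperproduct of singletons here stays a singleton, which follows inductively from the ``$=\{f\}$'' clause of Proposition~\ref{prop:multiplicity in strictly convex case}). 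Now I would lift: choose any $F_m\in K[\bm x]$ with $F_m^\varphi=f_m$ and, for each $i$, choose $L_i\in K[\bm x]$ with $L_i^\varphi=l_i$, which is possible because $\varphi$ is surjective so every coefficient can be lifted. Set $F=F_m\cdot L_1\cdots L_m\in K[\bm x]$. By Corollary~\ref{cor:morphism-factorization} (applied repeatedly), $F^\varphi\in f_m\cdot l_1\cdots l_m=\{f\}$, so $F^\varphi=f$. Then $F\in\varphi^{-1}\{f\}\subseteq\varphi^{-1}\mc F$ (here $\mc F=\{f\}$), and $F$ visibly factors out $L_1,\ldots,L_m\in\varphi^{-1}\mc L$, giving $\mult^K_{\varphi^{-1}\mc L}(\varphi^{-1}\{f\})\geq m$, i.e.\ $\mult^\varphi_{\mc L}(f)\geq m$.

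The main obstacle I anticipate is the bookkeeping in step $(3)$ around whether the relevant hyperproducts genuinely collapse to singletons so that $F^\varphi=f$ on the nose rather than merely $F^\varphi\in f\cdot(\text{stuff})$. This is why invoking the precise statement $g\cdot l=\{f\}$ from Proposition~\ref{prop:multiplicity in strictly convex case} (not just $f\in g\cdot l$) is essential, and one should check that the inductive product $f_m\cdot l_1\cdots l_m$ remains the singleton $\{f\}$ — each intermediate product $f_i\cdot l_i=\{f_{i-1}\}$ is a singleton of a dense strictly convex polynomial, so the next multiplication is again governed by Proposition~\ref{prop:multiplicity in strictly convex case}. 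A minor secondary point is verifying that each $L_i$ can be taken of degree $1$ and not a monomial so that it legitimately lies in $\varphi^{-1}\mc L$; this is immediate since $\varphi$ preserves the support of a polynomial (it sends nonzero coefficients to nonzero coefficients and $0$ to $0$), so a coefficient-wise lift of $l_i$ has the same Newton polytope $\Delta_{n+1}$.
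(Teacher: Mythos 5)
Your proof is correct and takes essentially the same route as the paper: the paper does a formal induction, peeling off one factor $l\in\mc L$ at a time via Lemma~\ref{lem:hyperfield multiplicity of a set is a maximum} and Proposition~\ref{prop:multiplicity in strictly convex case}, then lifting through $\varphi$ using surjectivity and Corollary~\ref{cor:morphism-factorization}; you have simply unrolled that induction into an explicit chain $\{f_{i-1}\}=f_i\cdot l_i$ and lifted the whole chain at once. (A cosmetic remark: the displayed sandwich of inequalities $(1)$--$(3)$ is redundant as stated, since $(1)$ and $(2)$ compose to a tautology — the only real content is $(3)$ — but this does not affect the correctness of the argument.)
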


\begin{proof}
By  Proposition~\ref{prop:comparison relative multiplicity and hyperfield multiplicity}, we have $\mult^\varphi_{\mc L}(f)\leq\mult^{H \rtimes \Gamma}_{\mc L}(f)$. We show the reverse inequality by induction on $m=\mult^{H \rtimes \Gamma}_{\mc L}(f)$. The base case $m=0$ is trivial, so we may assume that $m>0$, in which case we have $m=1+\mult^{H \rtimes \Gamma}_{\mc L}((f:\mc L))$. By Lemma~\ref{lem:hyperfield multiplicity of a set is a maximum}, there exists $g\in (f:\mc L)$ with $\mult^{H \rtimes \Gamma}_{\mc L}((f:\mc L))=\mult^{H \rtimes \Gamma}_{\mc L}(g)$, and by definition of $(f:\mc L)$ we have $f\in g\cdot l$ for some $l\in \mc L$. By Proposition~\ref{prop:multiplicity in strictly convex case}, the polynomial $g$ is dense, strictly convex, and $g\cdot l=\{f\}$, so by the induction hypothesis we have
\[
\mult^{H \rtimes \Gamma}_{\mc L}(g)= \mult^{\varphi}_{\mc L}(g)= \mult^K_{\varphi^{-1}\mc L}(\varphi^{-1}\{g\}) .
\]
Again by Lemma~\ref{lem:hyperfield multiplicity of a set is a maximum}, there exists $\widetilde g\in \varphi^{-1}\{g\}$ with 
\[
\mult^K_{\varphi^{-1}\mc L}(\varphi^{-1}\{g\}) =\mult^K_{\varphi^{-1}\mc L}(\widetilde g) .
\]
Let $\widetilde l\in \varphi^{-1}\{l\}$. Then we have 
\[
(\widetilde g\cdot \widetilde l)^\varphi \in g\cdot l=\{f\}  ,
\]
that is $(\widetilde g\cdot \widetilde l)^\varphi=f$. It follows that 
\[
\mult^\varphi_{\mc L}(f)=\mult^K_{\varphi^{-1}\mc L}(\varphi^{-1}\{f\})
\geq \mult^K_{\varphi^{-1}\mc L}(\widetilde g\cdot \widetilde l)
\geq 1+\mult^K_{\varphi^{-1}\mc L}(\widetilde g)
=m .
\]
\end{proof}

\subsection{Perturbation multiplicity}
One technique for analyzing the roots of a polynomial in $\C[\bm x]$ is to perturb the coefficients within the field of Puiseux series $\C[[t^\Q]]$ and consider a homotopy as $t \to 0$. By analogy, if we want to compute a multiplicity over a hyperfield $H$, we can consider the same multiplicity in $H \rtimes \R$ after a small perturbation. We will only consider strictly convex pertubations; in the case where the polynomial $f\in H[\bm x]$ we start with is dense, this allows us to bound the multiplicity of $f$ from below by $H$-enriched geometric multiplicities, which are much easier to compute than hyperfield multiplicities.

For this multiplicity, we work over $\S$. The sign hyperfield is special in that the inclusion $\S \to \S \rtimes \R = \TR$ splits canonically. That is, the angular component map $\angular \colon \TR  \to \S$ is a morphism of hyperfields. 

\begin{remark}
    A tropical extension consists of an exact sequence of groups $1 \to H^* \to E^* \to \Gamma \to 1$ meaning $\operatorname{im}(H^* \to E^*) = \operatorname{eq}(1, E^* \to \Gamma)$. The corresponding sequence of hyperrings $0 \to H \to E \to \K \rtimes \Gamma \to 0$ is not necessarily exact because $\operatorname{eq}(1, E* \to \Gamma)$ is only the multiplicative kernel. So despite having a section $\Gamma \to H \rtimes \Gamma, \gamma \mapsto t^\gamma$, we should not expect that the angular component map $\angular \colon H \rtimes \Gamma \to H$ is a morphism.
\end{remark}

\begin{definition}
\label{def:perturbation multiplicity}
    Let $f \in \S[\bm x]$ and let $l\in \S[\bm x]$ be a linear form. Let $\mc F$ denote the subset of $\angular^{-1}\{f\}$ consisting of strictly convex polynomials in $\TR[\bm x]$. We define the \emph{perturbation multiplicity} of $l$ in $f$, denoted $\pmult_l^\S(f)$ by
    \[
    \pmult_l^\S(f)= \mult^{\TR}_{\angular^{-1}\{l\}}(\mc F) .
    \]
\end{definition}

\begin{corollary}
\label{cor:comparison of multiplicity with perturbation multiplicity}
    Let $f\in \S[\bm x]$ and let $l\in \S[\bm x]$ be a linear form. Then we have
    \[
    \pmult_l^\S(f)\leq \mult^\S_l(f) .
    \]
    If $f$ is dense, $\mc F \subset \TR[\bm x]$ is the set of all strictly convex polynomials in $\angular^{-1}(f)$, and $l$ is not a monomial, then 
    \[
    \pmult_l^\S(f)=\gmult^{\S}_{\angular^{-1}\{l\}}(\mc F)
    \]
\end{corollary}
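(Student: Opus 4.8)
The plan is to prove the two assertions separately, both by reducing to results already established for the tropical real hyperfield.

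For the inequality $\pmult_l^\S(f)\leq \mult^\S_l(f)$, I would unwind the definition of $\pmult$ and apply the angular component morphism $\angular\colon\TR\to\S$. By Definition~\ref{def:perturbation multiplicity}, $\pmult_l^\S(f)=\mult^{\TR}_{\angular^{-1}\{l\}}(\mc F)$, where $\mc F\subseteq\angular^{-1}\{f\}$ is the set of strictly convex polynomials. Now $\angular\colon\TR\to\S$ is a morphism of hyperfields (as recalled just before Definition~\ref{def:perturbation multiplicity}), so it induces a diagonal transformation $\TR[\bm x]\to\S[\bm x]$ sending $\mc F$ into $\{f\}$ and $\angular^{-1}\{l\}$ into $\{l\}$. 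Since $f\neq 0$, the image $\{f\}$ does not contain the zero polynomial, so Lemma~\ref{lem:multiplicities and morphisms} gives
\[
\mult^{\TR}_{\angular^{-1}\{l\}}(\mc F)\leq \mult^{\S}_{\angular(\angular^{-1}\{l\})}(\angular(\mc F))\leq \mult^{\S}_{l}(f),
\]
the last step because $\angular(\angular^{-1}\{l\})=\{l\}$ (as $\angular$ is surjective) and $\angular(\mc F)\subseteq\{f\}$, using monotonicity of multiplicity under passing to subsets (from the proof of Lemma~\ref{lem:hyperfield multiplicity of a set is a maximum}). This proves the first claim.

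For the second assertion, assume $f$ is dense and $l$ is not a monomial, and let $\mc F$ be the set of \emph{all} strictly convex polynomials in $\angular^{-1}(f)$. The point is that every $\widetilde f\in\mc F$ is a dense strictly convex polynomial in $\TR[\bm x]=(\S\rtimes\R)[\bm x]$: density of $f$ forces every lattice point of $\Newt(f)$ to appear with nonzero coefficient in $\widetilde f$, and strict convexity is imposed by the definition of $\mc F$. Likewise every element of $\angular^{-1}\{l\}$ has Newton-degree $1$ and is not a monomial. By Corollary~\ref{cor:enriched multiplicity equals multiplicity in strictly convex case} (applied with $H=\S$, $\Gamma=\R$) we have $\gmult^\S_{\angular^{-1}\{l\}}(\widetilde f)=\mult^{\TR}_{\angular^{-1}\{l\}}(\widetilde f)$ for each such $\widetilde f$. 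Taking the maximum over $\widetilde f\in\mc F$ and using Lemma~\ref{lem:hyperfield multiplicity of a set is a maximum} on both sides (the enriched geometric multiplicity of a set being, by definition, the maximum over its elements) yields
\[
\pmult_l^\S(f)=\mult^{\TR}_{\angular^{-1}\{l\}}(\mc F)=\gmult^\S_{\angular^{-1}\{l\}}(\mc F).
\]

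The only genuinely delicate point is checking the hypotheses of Corollary~\ref{cor:enriched multiplicity equals multiplicity in strictly convex case} uniformly over all $\widetilde f\in\mc F$, namely that density of $f\in\S[\bm x]$ really does guarantee density of each strictly convex lift $\widetilde f$ (and not merely that $\supp(\widetilde f)\subseteq\supp(f)$), together with the bookkeeping that $\angular^{-1}\{l\}$ contains no monomial. Once those are in place, the identification is immediate from the cited corollary and the ``multiplicity of a set is a maximum'' lemmas. I expect no substantial obstacle beyond this verification.
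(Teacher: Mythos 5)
Your proof is correct and follows exactly the route the paper takes: the inequality is reduced to Lemma~\ref{lem:multiplicities and morphisms} applied to the morphism $\angular\colon\TR\to\S$, and the equality is reduced to Corollary~\ref{cor:enriched multiplicity equals multiplicity in strictly convex case} together with the ``set multiplicity is a maximum'' mechanism. The paper's own proof is two sentences citing those same two results; you have supplied the unwinding (surjectivity of $\angular$, the verification that $\supp(\widetilde f)=\supp(f)$ so density lifts, and that lifts of the non-monomial $l$ remain non-monomial) that the paper leaves implicit.
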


\begin{proof}
    The inequality is a direct consequence of Lemma~\ref{lem:multiplicities and morphisms}, the equality a direct consequence of Corollary~\ref{cor:enriched multiplicity equals multiplicity in strictly convex case}.
\end{proof}

\begin{remark}
\label{rem:permutation multiplicity and subdivisions}
    Given a dense polynomial $f \in \S[\bm x]$ and a linear form $l\in \S[\bm x]$, the equality $\pmult_l^\S(f)=\gmult^{\S}_{\angular^{-1}\{l\}}(\mc F)$ from Corollary \ref{cor:comparison of multiplicity with perturbation multiplicity} reduces the computation of $\pmult_l^\S(f)$ to a finite problem, that is only finitely many multiplicities $\gmult_{\angular^{-1}\{l\}}^\S(\widetilde f)$ for $\widetilde f\in \mc F$ need to be computed. Indeed, the condition that $V(\widetilde f)=W+V(\widetilde l)$ for some $\S$-enriched tropical hypersurface $W$ and some $\widetilde l\in \angular^{-1}\{l\}$ does not depend on the exact position of the vertices of the $\S$-enriched tropical hypersurface $V(\widetilde f)$, but only its combinatorial type. Expressed dually, $\gmult^\S_{\angular^{-1}\{l\}}(\widetilde f)$ only depends on $l$, $f$, and the Newton subdivision of $\widetilde f$, for which there are only finitely many choices.

    Now assume we are in two variables and we are given a strictly convex $\widetilde f$ in $\angular^{-1}\{f\}$. If $V(\widetilde f)=W+V(\widetilde l)$ as above, then the Newton subdivision of $f$ is a mixed subdivision of the Newton subdivisions of $W$ and $V(\widetilde l)$. Because $\widetilde f$ is dense and strictly convex, every lattice point of $\Newt(\widetilde f)$ appears as a vertex of the Newton subdivision of $\widetilde f$.  This can only happen if $W$ and $V(\widetilde l)$  meet transversally with intersection multipliciy $1$. Therefore, every cell in the mixed subdivision of $W$ and $V(\widetilde l)$ either is a translate of a cell in the Newton subdivision of $W$ or $V(\widetilde l)$, or a parallelogram of volume $1$. Since $V(\widetilde f)=W+V(\widetilde l)$ needs to hold on the level of $\S$-enriched tropical hypersurfaces, the signs of $f$ and $l$ give additional constraints on which mixed subdivisions can appear for $f$. Namely, each translate of a cell of the Newton subdivision of $W$ and $V(\widetilde l)$ has to have the same signs as in $W$ or $V(\widetilde l)$ or exactly opposite signs, and each parallelogram has to be of the following form, up to translation and the action of $\mathrm{GL}_2(\Z)$:
    \begin{center}
        \begin{tikzpicture}[baseline=0]
            \draw (0, 0) -- (0, 1) -- (1, 1) -- (1, 0) -- cycle;
            \filldraw (0,0) circle (0.03) node[below left] {$+$};
            \filldraw (1,0) circle (0.03) node[below right] {$+$};
            \filldraw (0,1) circle (0.03) node[above left] {$+$};
            \filldraw (1,1) circle (0.03) node[above right] {$+$};
        \end{tikzpicture}
        \hspace{15mm}
        \begin{tikzpicture}[baseline=0]
            \draw (0, 0) -- (0, 1) -- (1, 1) -- (1, 0) -- cycle;
            \filldraw (0,0) circle (0.03) node[below left] {$+$};
            \filldraw (1,0) circle (0.03) node[below right] {$-$};
            \filldraw (0,1) circle (0.03) node[above left] {$+$};
            \filldraw (1,1) circle (0.03) node[above right] {$-$};
        \end{tikzpicture}
        \hspace{15mm}
        \begin{tikzpicture}[baseline=0]
                \draw (0, 0) -- (0, 1) -- (1, 1) -- (1, 0) -- cycle;
                \filldraw (0,0) circle (0.03) node[below left] {$+$};
                \filldraw (1,0) circle (0.03) node[below right] {$-$};
                \filldraw (0,1) circle (0.03) node[above left] {$-$};
                \filldraw (1,1) circle (0.03) node[above right] {$+$};
        \end{tikzpicture}
    \end{center}
\end{remark}
    
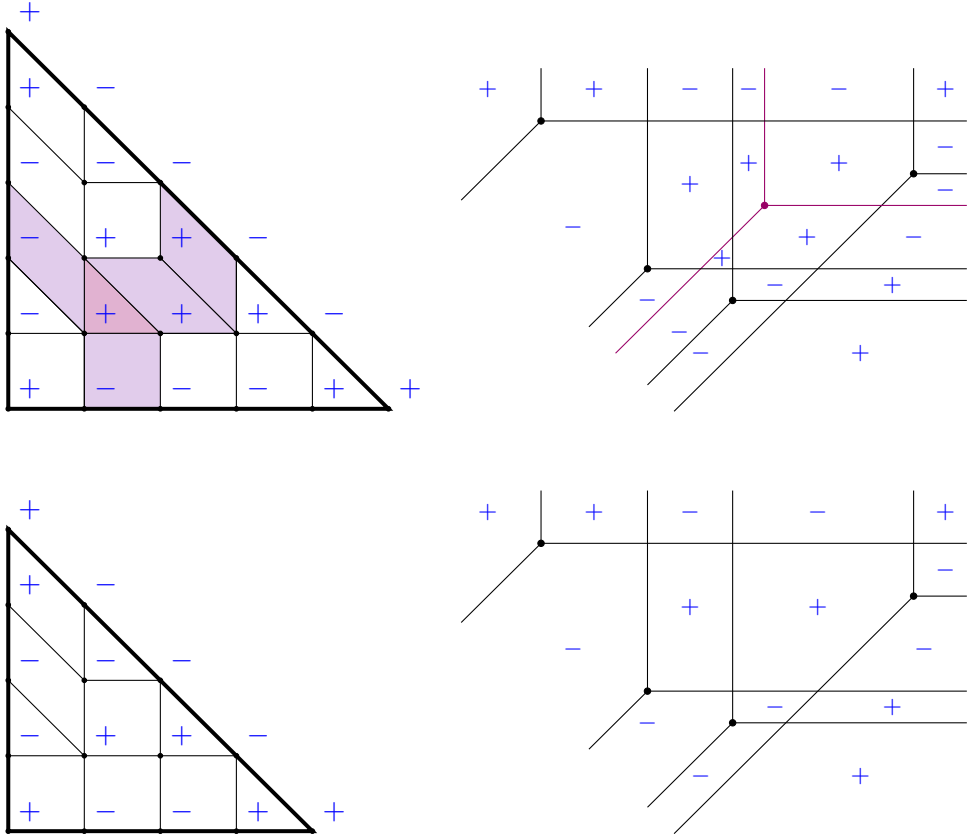
\begin{figure}[tbp]
    \centering
        \begin{tikzpicture}[scale=1]
        \filldraw [fill=red!60!blue!30] (1,1) -- +(1,0) -- +(0,1) -- cycle;
        \filldraw [fill=red!40!blue!20] (1,0) -| +(1,1) -| cycle;
        \filldraw [fill=red!40!blue!20] (0,2) -- +(1,-1) -- +(1,0) -- +(0,1) -- cycle;
        \filldraw [fill=red!40!blue!20] (2,2) -- +(1,-1) -- +(1,0) -- +(0,1) -- cycle;
        \filldraw [fill=red!40!blue!20] (2,1) -- +(-1,1) -- +(0,1) -- +(1,0) -- cycle;
    
        \draw (0,1) -| (1,0);
        \draw (0,2) -- +(1,-1);
        \draw (0,4) -- +(1,-1);
        \draw (4,0) -- +(0,1);
        \draw (1,4) |- +(1,-1);
        \draw (3,2) |- +(1,-1);
        \draw (2,1) -| +(1,-1);
        \draw (1,2) -- +(0,1);
    
        \foreach \x in {0,...,5} {
            \foreach \y [parse=true] in {0,...,5 - \x}{
                \filldraw (\x, \y) circle (0.03);
            }
        }

        \foreach \x/\y in {0/0, 0/4, 0/5, 1/1, 1/2, 2/1, 2/2, 3/1, 4/0, 5/0} {
            \draw (\x, \y) node[above right, color=blue] {$+$};
        }

        \foreach \x/\y in {0/1, 0/2, 0/3, 1/0, 1/3, 1/4, 2/0, 2/3, 3/0, 3/2, 4/1} {
            \draw (\x, \y) node[above right, color=blue] {$-$};
        }
    
        \draw [line width=1.5] (0,0) -- (0, 5) -- (5, 0) -- cycle;
    \end{tikzpicture}
    \hfill
 \begin{tikzpicture}[font=\footnotesize, scale=1.4, rotate=180]
    \begin{scope}[color=blue]
        \node at (.2,.2) {$+$};
        \node at (1.2,.2) {$-$};
        \node at (2.05,.2) {$-$};
        \node at (2.6,.2) {$-$};
        \node at (3.5,.2) {$+$};
        \node at (4.5,.2) {$+$};

        \node at (.2,.75) {$-$};
        \node at (1.2,.9){$+$};
        \node at (2.05,.9){$+$};
        \node at (2.6, 1.1){$+$};
        \node at (3.7, 1.5){$-$};
        
        \node at (.2,1.15) {$-$};
        \node at (1.5, 1.6) {$+$};
        \node at (2.3,1.8) {$+$};
        \node at (3,2.2){$-$};
          
        \node at (.5, 1.6){$-$};
        \node at (1.8,2.05){$-$};
        \node at (2.7,2.5){$-$};
        
        \node at (.7, 2.05){$+$};
        \node at (2.5,2.7){$-$};
        
        \node at (1, 2.7){$+$};
    \end{scope}
        
        \maxtropline{0.5}{1}{6}
        \maxtroplinecolor{1.9}{1.3}{6}{red!60!blue}
        \maxtropline{2.2}{2.2}{6}
        \maxtropline{3}{1.9}{6}
        \maxtropline{4}{0.5}{6}
    \end{tikzpicture}    

    \vspace{1cm}
    
    \begin{tikzpicture}[scale=1]   
        \draw (0,1) -| (1,0);
        \draw (1,1) -| (2,0);
        \draw (2,1) -| (3,0);
        \draw (0,2) -- (1,1) -- (1,3);
        \draw (0,3) -- (1,2) -- (2,2) -- (2,1);
    
        \foreach \x in {0,...,4} {
            \foreach \y [parse=true] in {0,...,4 - \x}{
                \filldraw (\x, \y) circle (0.03);
            }
        }

        \foreach \x/\y in {0/0, 0/3, 0/4, 1/1, 2/1, 3/0, 4/0} {
            \draw (\x, \y) node[above right, color=blue] {$+$};
        }

        \foreach \x/\y in {0/1, 0/2, 1/0, 1/2, 1/3, 2/0, 2/2, 3/1} {
            \draw (\x, \y) node[above right, color=blue] {$-$};
        }
    
        \draw [line width=1.5] (0,0) -- (0, 4) -- (4, 0) -- cycle;
    \end{tikzpicture}
    \hfill
 \begin{tikzpicture}[font=\footnotesize, scale=1.4, rotate=180]
    \begin{scope}[color=blue]
        \node at (.2,.2) {$+$};
        \node at (1.4,.2) {$-$};
        \node at (2.6,.2) {$-$};
        \node at (3.5,.2) {$+$};
        \node at (4.5,.2) {$+$};

        \node at (.2,.75) {$-$};
        \node at (1.4,1.1){$+$};
        \node at (2.6, 1.1){$+$};
        \node at (3.7, 1.5){$-$};
        
        \node at (.4,1.5) {$-$};
        \node at (3,2.2){$-$}; 
        \node at (1.8,2.05){$-$};
        
        \node at (.7, 2.05){$+$};
        \node at (2.5,2.7){$-$};
        
        \node at (1, 2.7){$+$};
    \end{scope}
        
        \maxtropline{0.5}{1}{6}
        \maxtropline{2.2}{2.2}{6}
        \maxtropline{3}{1.9}{6}
        \maxtropline{4}{0.5}{6}
    \end{tikzpicture}

   \caption{Sign compatible subdivision, quotient with induced subdivision, and associated tropical hypersurfaces.}
    \label{fig:signed-subdivision}
\end{figure}

\begin{example}
    With the notation as in Remark~\ref{rem:permutation multiplicity and subdivisions}, let $\widetilde f\in \TR[x,y]$ be a polynomial of Newton-degree $5$ with $f^\angular$ and its Newton subdivision as in  Figure~\ref{fig:signed-subdivision} on the top left. Then the Newton subdivision can be realized as a mixed subdivision of subdivisions of the $4$-simplex and the Newton polytope of $l=1+x+y$ (the $1$-simplex) by declaring the triangle in dark purple in the figure as the unique unmixed cell coming from the $1$-simplex, and declaring the light purple cells as the mixed cells. The dark purple unmixed cell has the same sign pattern as the Newton polytope of $l$ and the mixed cells all have the allowed sign patterns outlined in Remark~\ref{rem:permutation multiplicity and subdivisions}. We can conclude that $V(\widetilde f)=W+V(\widetilde l)$ for some $\S$-enriched tropical hypersurface $W$ and some $l\in \angular^{-1}\{l\}$. Moreover, the procedure determines the subivision and signs of the Newton polytope of $W$: simply remove the cells in purple and push together the remaining cells. The result is depicted on the lower left of Figure~\ref{fig:signed-subdivision}. Note that this procedure can be repeated with the all-negative triangle and suitably chosen mixed cells, giving a total geometric multiplicity of $\gmult^\S_{\angular^{-1}\{l\}}(\widetilde f)=2$.

    Finally, the right of Figure~\ref{fig:signed-subdivision} shows the dual tropical picture. The given Newton subdivision of $\widetilde f$ makes $V(\widetilde f^\nu)$ a union of tropical lines. The tropical line $L$ in purple on the top right corresponds to the purple cells and what we phrased in terms of subdivisions above is that there exists an $\S$-enrichment  $\widetilde L$ of $L$ and an $\S$-enriched tropical hypersurface $W$ such that $V(\widetilde f)=W+\widetilde L$ and $\widetilde L=V(\widetilde l)$ for some $\widetilde l \in \angular^{-1}\{l\}$. The $\S$-enriched tropical hypersurface $W$ is depicted on the bottom right.
\end{example}

\begin{example}
    The perturbation multiplicity can also be defined over hyperfields $H$ for which the angular component $\angular\colon H\rtimes \R\to H$ is not a morphism. However, in these settings the inequality $\pmult_l^H(f)\leq \mult^H_l(f)$ will fail to hold in general. Consider the polynomial
    \[
    f(x,y)=0+1 x+y+1 x^2+1 xy+ y^2 \in \T[x,y]
    \]
    and let $l=0+x+y\in \T[x,y]$. Then $\mult^\T_l(f)=\gmult^\T_l(f)=0$.
    Now extend from $\T$ to $\T \rtimes \R$ (using reverse lexicographic order). We have
    \begin{align*}
        &[(0,0)+(0,0)x+(0,0)y] \cdot [(0,0) + (1, -1)x + (0,1) y] \\
        &\qquad = (0, 0)
                + (1,-1)x
                + (0,0) y
                + (1,-1)x^2
                + (1,-1)xy
                + (0,1)y^2
    \end{align*}
    This is a strictly convex polynomial whose (coefficient-wise) angular component is $f$, so $\pmult^\T_l(f)\geq 1$.
\end{example}

\begin{proposition}
\label{prop:smult leq rel-mult}
    Let $f\in \S[\bm x]$ be dense and let $l\in \S[\bm x]$ be of Newton-degree $1$. Moreover, let $K$ be a valued real closed field with value group $\R$. Then we have
    \[
    \pmult_l^\S(f)\leq \mult^{\sgn}_l(f).
    \]
\end{proposition}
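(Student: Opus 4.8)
The plan is to realize the perturbation multiplicity as a relative multiplicity with respect to the signed valuation on $K$ and then appeal to Proposition~\ref{prop:relative multiplicity in strictly convex case}. Write $m=\pmult_l^\S(f)=\mult^{\TR}_{\angular^{-1}\{l\}}(\mc F)$, where $\mc F\subseteq\TR[\bm x]$ is the set of strictly convex polynomials with angular component $f$. By Lemma~\ref{lem:hyperfield multiplicity of a set is a maximum} I would fix a single $\tilde f\in\mc F$ with $\mult^{\TR}_{\angular^{-1}\{l\}}(\tilde f)=m$. Since $\angular\colon\TR\to\S$ sends $\TR^*$ into $\S^*$ and $0$ to $0$, we have $\supp(\tilde f)=\supp(\tilde f^\angular)=\supp(f)$; hence $\tilde f$ is dense because $f$ is, and it is strictly convex by choice. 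For the same reason $\angular^{-1}\{l\}$ is a set of polynomials of Newton-degree $1$.

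Next I would set up the signed valuation. As the value group of $K$ is $\R$, the valuation $\nu\colon K\to\K\rtimes\R$ is surjective, hence splits, so $\vangular$ and therefore $\vsign\colon K\to\S\rtimes\R=\TR$ are defined (page~\pageref{def:signed valuation}). Moreover $\vsign$ is surjective: $\nu$ attains every $\gamma\in\R$, the angular component surjects each fibre $\nu^{-1}(\gamma)$ onto $\kappa^*$, and $\sgn\colon\kappa\to\S$ is surjective since the real closed residue field $\kappa$ has both positive and negative elements. Now Proposition~\ref{prop:relative multiplicity in strictly convex case}, applied with the field $K$, hyperfield $H=\S$, group $\Gamma=\R$, the surjective morphism $\varphi=\vsign$, the dense strictly convex polynomial $\tilde f$, and the set $\mc L=\angular^{-1}\{l\}$ of Newton-degree-$1$ polynomials, yields
\[
\mult^K_{\vsign^{-1}(\angular^{-1}\{l\})}\bigl(\vsign^{-1}\{\tilde f\}\bigr)=\mult^{\vsign}_{\angular^{-1}\{l\}}(\tilde f)=\mult^{\TR}_{\angular^{-1}\{l\}}(\tilde f)=m .
\]

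To finish, I would pick (Lemma~\ref{lem:hyperfield multiplicity of a set is a maximum}) a polynomial $\hat f\in\vsign^{-1}\{\tilde f\}$ attaining this value. Unwinding the definition of the multiplicity over the field $K$, this means $\hat f=\hat l_1\cdots\hat l_m\cdot\hat g$ in $K[\bm x]$ with $\hat l_i\in\vsign^{-1}(\angular^{-1}\{l\})$ and $\hat g\in K[\bm x]$. Because $\angular\circ\vsign=\sgn$ on $K$, applying $\angular$ to the coefficients of $\hat f^{\vsign}=\tilde f$ gives $\hat f^\sgn=\tilde f^\angular=f$, and likewise $\hat l_i^\sgn=l$ for every $i$. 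Thus $\hat f$ has $m$ linear factors of sign pattern $l$, so $\mult^\sgn_l(f)\ge\mult^K_{\sgn^{-1}\{l\}}(\hat f)\ge m$ when the relative multiplicity is computed over $K$. Since ``$\mult^\sgn_l(f)\ge m$'' amounts to the existence of a polynomial with support $\supp(f)$, sign pattern $f$, and $m$ linear factors of sign pattern $l$ together with a cofactor of bounded degree---a first-order statement in the language of ordered fields---the completeness of the theory of real closed fields transfers it from $K$ to $\R$, proving the proposition.

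The bulk of this is bookkeeping, given Proposition~\ref{prop:relative multiplicity in strictly convex case}. The step demanding the most care is the reduction to that proposition: one must produce $\tilde f$ that is dense \emph{and} strictly convex simultaneously---this is exactly where the density hypothesis on $f$ is used---and must verify that $\vsign\colon K\to\TR$ really is surjective. A minor additional point is the transfer from $K$ to $\R$; if $\mult^\sgn$ is understood over the given valued real closed field $K$, that last step is unnecessary.
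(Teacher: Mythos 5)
Your proof is correct and follows essentially the same strategy as the paper's: realize $\pmult^\S_l(f)$ by a single dense strictly convex $\tilde f\in\angular^{-1}\{f\}$, apply Proposition~\ref{prop:relative multiplicity in strictly convex case} with the surjective $\vsign\colon K\to\TR$, and use $\angular\circ\vsign=\sgn$ together with $\vsign^{-1}\{\tilde f\}\subseteq\sgn^{-1}\{f\}$ to bound $\mult^\sgn_l(f)$ from below. You merely make explicit two points the paper leaves implicit—the surjectivity of $\vsign$ (guaranteed by the value-group-$\R$ hypothesis) and the optional model-theoretic transfer from $K$ to $\R$.
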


\begin{proof}
     By Lemma~\ref{lem:hyperfield multiplicity of a set is a maximum}, there exists a polynomial $g\in \angular^{-1}\{f\}\subseteq \TR[\bm x]$, which is strictly convex and where $\pmult^\S_l(f)= \mult^{\TR}_{\angular^{-1}\{l\}}(g)$. Because $f$ is dense, $g$ is dense as well. By the definition of the relative multiplicity and Proposition~\ref{prop:relative multiplicity in strictly convex case}, we have
    \[
    \mult^K_{\sgn^{-1}\{l\}}(\vsign^{-1}\{g\})=\mult^\vsign_{\angular^{-1}\{l\}}(g)= \mult^{\TR}_{\angular^{-1}\{l\}}(g) .
    \]
    As $\vsign^{-1}\{g\}\subseteq \sgn^{-1}\{f\}$, we conclude that
    \begin{align*}
    \mult^\sgn_l(f)&=\mult^K_{\sgn^{-1}\{l\}}(\sgn^{-1}\{f\}) \\
    &\geq\mult^K_{\sgn^{-1}\{l\}}(\vsign^{-1}\{g\})=\mult^{\TR}_{\angular^{-1}\{l\}}(g) . \qedhere
    \end{align*}
\end{proof}

\begin{example}
\label{ex:relative mult can be strictly larger than perturbation mult}
The perturbation multiplicity can be strictly smaller than the relative multiplicity with respect to $\sgn$, even for dense polynomials. To see this, consider the polynomial
\[
f=    \begin{tikzpicture}[baseline=(current bounding box.center)]
        \matrix[matrix of math nodes]{
        - &   &   &  \\
        - & + &   &  \\
        + & - & - &  \\
        + & + & + & -\\
        };
    \end{tikzpicture} \in
    \begin{tikzpicture}[baseline=(current bounding box.center)]
        \matrix[matrix of math nodes]{
        + &   \\
        + & + \\
        };
    \end{tikzpicture} \cdot
    \begin{tikzpicture}[baseline=(current bounding box.center)]
        \matrix[matrix of math nodes]{
        - &   &     \\
        - & + &     \\
        + & + & -   \\
        }; 
    \end{tikzpicture}
\]
and let $l=1+x+y$.
The given factorization of $f$ is the unique way to factor out $l$, so we see that $\mult_l^\S(f)=\bmult_l^\S(f)=1$. We also have
\[
f=\big((1+x+y)(1+.5x-.3y)(1-.33x+.01y)\big)^\sgn ,
\]
so that $\mult_l^\sgn(f)=1$ as well.
However, there is no signed mixed subdivision containing a positive or negative triangle, so $\pmult_l^\S(f)=0$. 
\end{example}

\subsection{Multiplicities over \texorpdfstring{$\S$}{S} in degree 2}

Since multiplicities in degree $1$ are trivial, we now study in detail the first interesting case of polynomials of Newton-degree $2$. We work entirely over the hyperfield $\S$.

\begin{proposition}
\label{prop:bmult equals mult for quadratic polynomials}
    Let $H$ be a hyperfield, let $f\in H[\bm x]$ be a   polynomial of Newton-degree $2$ in $n\geq 2$ variables and let $l\in \S[\bm x]$ be of Newton-degree $1$. Then we have
    \[
    \bmult^\S_l(f)= \mult^\S_l(f) .
    \]
\end{proposition}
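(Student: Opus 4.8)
The plan is to reduce the claim to explicit bookkeeping with the coefficients of the homogenizations of $f$ and $l$. Since $\mult^\S_l(f)\le\bmult^\S_l(f)$ always holds by Corollary~\ref{cor:mult leq bmult}, only the reverse inequality needs proof. I would first normalize: if $l=s_0+\sum s_ix_i$, then the invertible diagonal monomial substitution $x_i\mapsto(s_0s_i)x_i$ turns $l$ into $s_0(1+x_1+\dots+x_n)$, and since a diagonal transformation and its inverse can only increase multiplicities (Lemma~\ref{lem:multiplicities and morphisms}), it preserves them; rescaling $l$ by the unit $s_0$ is also harmless, so I may assume $l=1+x_1+\dots+x_n$. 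As homogenization does not change multiplicities (this is used in the proof of Corollary~\ref{cor:mult leq bmult}), I then pass to $\tilde l=x_0+\dots+x_n$ and $\tilde f=\sum_{0\le j\le k\le n}a_{jk}x_jx_k$; the hypothesis $\Newt(f)=2\Delta_{n+1}$ forces $a_{jj}\in\S^*$ for every $j$.

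Next I would translate the multiplicities into conditions on the $a_{jk}$. Comparing $x_j^2$-coefficients shows that the only possible quotient $\tilde g$ with $\tilde f\in\tilde g\cdot\tilde l$ is $\tilde g_0:=\sum_j a_{jj}x_j$, and comparing $x_jx_k$-coefficients shows $\tilde f\in\tilde g_0\cdot\tilde l$ iff $a_{jk}\in a_{jj}\boxplus a_{kk}$ for all $j<k$, i.e.\ (over $\S$) iff $a_{jk}=a_{jj}$ whenever $a_{jj}=a_{kk}$. Hence $\mult^\S_{\tilde l}(\tilde f)\ge1$ is exactly this condition; iterating via Lemma~\ref{lem:hyperfield multiplicity of a set is a maximum}, $\mult^\S_{\tilde l}(\tilde f)\ge2$ iff moreover $\tilde g_0$ is divisible by $\tilde l$, which happens iff all signs $a_{jj}$ coincide, i.e.\ iff $\tilde f=\epsilon\,\tilde l^{\,2}$ for a single $\epsilon\in\S^*$. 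The same analysis applied to $\pi_i(\tilde f)$ and $\pi_i(\tilde l)$ (same shape, in the variables indexed by $\{0,\dots,n\}\setminus\{i\}$) gives: $\mult^\S_{\pi_i\tilde l}(\pi_i\tilde f)\ge1$ iff $a_{jk}\in a_{jj}\boxplus a_{kk}$ for all $j<k$ with $j,k\ne i$; and $\mult^\S_{\pi_i\tilde l}(\pi_i\tilde f)\ge2$ iff all $a_{jk}$ with $j,k\ne i$ equal one common sign $\epsilon_i$. Via the morphism $\S\to\K$, Lemma~\ref{lem:multiplicities and morphisms}, and Example~\ref{ex:multiplicity over Krasner hyperfield}, all of these multiplicities are bounded above by the Newton-degree $2$, so $\bmult^\S_l(f)\in\{0,1,2\}$.

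The core of the argument is then two implications, both powered by the single fact that $\{0,\dots,n\}$ has at least three elements (this is where $n\ge2$ enters). If $\bmult^\S_l(f)\ge1$, then for any pair $j<k$ I can pick $i\notin\{j,k\}$, and the corresponding boundary condition yields $a_{jk}\in a_{jj}\boxplus a_{kk}$; as this holds for every pair, $\mult^\S_{\tilde l}(\tilde f)\ge1$. If $\bmult^\S_l(f)\ge2$, then for $i\ne i'$ I pick $j\notin\{i,i'\}$ and read off $\epsilon_i=a_{jj}=\epsilon_{i'}$, so all $\epsilon_i$ equal a common $\epsilon$; then for every $j\le k$, choosing $i\notin\{j,k\}$ gives $a_{jk}=\epsilon_i=\epsilon$, hence $\tilde f=\epsilon\,\tilde l^{\,2}$ and $\mult^\S_{\tilde l}(\tilde f)=2$. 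Combining these with $\bmult^\S_l(f)\in\{0,1,2\}$ and $\mult\le\bmult$ gives equality in each of the three cases.

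I do not expect a genuine obstacle. The only points needing care are verifying that the quotient is forced to be exactly $\tilde g_0$ and that its divisibility by $\tilde l$ is equivalent to all $a_{jj}$ agreeing---this is precisely where the density hypothesis $\Newt(f)=2\Delta_{n+1}$ (hence $a_{jj}\ne0$) is essential---and the index bookkeeping needed to invoke $\lvert\{0,\dots,n\}\rvert\ge3$, which is exactly why $n\ge2$ is required and the statement fails for $n=1$. Everything else is routine sign arithmetic; in fact the same argument goes through verbatim over an arbitrary hyperfield $H$ with $\S$ replaced by $H$, using $a_{jk}\in\epsilon\boxplus\epsilon$ in place of $a_{jk}=\epsilon$ in the degree-$2$ characterization.
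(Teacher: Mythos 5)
Your proposal is correct and follows essentially the same route as the paper's proof: homogenize, normalize $l$ to $\sum_{i=0}^n x_i$, observe that the square terms $a_{jj}x_j^2$ (nonzero by the Newton-degree-$2$ hypothesis) force the unique candidate quotient $\tilde g_0=\sum_j a_{jj}x_j$, and exploit that every quadratic monomial involves at most two of the $n+1\ge 3$ homogeneous variables to transfer each coefficient condition from some facet $\pi_i$ to the whole polynomial; the degree-$2$ case is likewise handled by noting the quotient must be a unit multiple of $\tilde l$, which is again detectable on each facet.
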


\begin{proof}
    To simplify notation, we homogenize both $l$ and $f$, introducing a new variable $x_0$. After scaling the variables appropriately, we may further assume that $l=\sum_{i=0}^n x_i$. Let $A$ be the support of $f$ and write $f=\sum_{\bm a\in A} c_{\bm a} \bm x^{\bm a}$. Let $h= \sum_{i=0}^n c_{2\bm e_i} x_i$, where $\bm e_0,\ldots, \bm e_n$ denotes the standard basis of $\Z^{n+1}$.
    Whenever $f\in l\cdot g$, the square terms $c_{2\bm e_i}x_i^2$, of $f$ uniquely determine $g$. More precisely, $f\in l\cdot g$ implies that $g=h$.
    
    For $0\leq i\leq n$ let $\pi_i\colon H[x_0,\ldots, x_n]\to H[x_0, \ldots, \hat x_i,\ldots, x_n]$ be the morphism sending $x_i$ to $0$ and $x_j$ to $x_j$ for $j\neq i$. For each $0\leq i\leq n$, the polynomial $\pi_i(f)$ also has Newton-degree $2$. Therefore, the same reasoning as for $f$ applies to $\pi_i(f)$ and $\pi_i(f)\in \pi_i(l)\cdot g$ implies $g=\pi_i(h)$. Because all monomials of $f$ only involve  two variables and $n\geq 2$, we have $f\in l\cdot h$ if and only if $\pi_i(f)\in \pi_i(l)\cdot \pi_i(h)$ for all $0\leq i\leq n$. By what we have observed, this implies that $\mult^H_l(f)\geq 1$ is equivalent to $\bmult^H_l(f)\geq 1$. Moreover, we have $\mult^H_l(f)=2$ if and only if $\mult^H_l(f)\geq 1$ and $h$ and $l$ differ by a factor in $H^*$. On the other hand, $h$ and $l$ differ by a factor in $H^*$ if and only if $\pi_i(h)$ and $\pi_i(l)$ differ by a factor in $H^*$ for all $0\leq i\leq n$, so that $\mult^H_l(f)=2$ is equivalent to $\bmult^H_l(f)=2$. 
\end{proof}

\begin{theorem}
\label{thm:multiplicity for dense quadratic poly in 2 variables}
Let $f\in \S[x,y]$ be a dense polynomial of Newton-degree $2$ and let $l\in \S[x,y]$ be of Newton-degree $1$. Then we have
\[
\pmult_l^\S(f)=\mult^\sgn_l(f)=\mult_l^\S(f)= \bmult_l^\S(f) .
\]
\end{theorem}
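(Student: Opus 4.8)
The statement asserts a chain of four equalities. By Corollary~\ref{cor:comparison of multiplicity with perturbation multiplicity}, Proposition~\ref{prop:comparison relative multiplicity and hyperfield multiplicity}, and Corollary~\ref{cor:mult leq bmult}, we already have the chain of inequalities
\[
\pmult_l^\S(f)\le \mult^\sgn_l(f)\le \mult^\S_l(f)\le \bmult^\S_l(f),
\]
so it suffices to prove the reverse inequality $\bmult^\S_l(f)\le \pmult^\S_l(f)$. The plan is to run a case analysis on the value of $m=\bmult^\S_l(f)\in\{0,1,2\}$ (it is at most $2$ since $f$ has Newton-degree $2$, and trivially the case $m=0$ needs nothing). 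In each of the cases $m=1$ and $m=2$, I want to exhibit a strictly convex polynomial $\widetilde f\in\TR[x,y]$ with $\widetilde f^\angular=f$ and $\gmult^\S_{\angular^{-1}\{l\}}(\widetilde f)\ge m$, which by the second part of Corollary~\ref{cor:comparison of multiplicity with perturbation multiplicity} gives $\pmult^\S_l(f)\ge m$. Equivalently, using the dual picture from Remark~\ref{rem:permutation multiplicity and subdivisions}, I need to produce a sign-compatible mixed subdivision of $\Newt(f)=2\Delta_3$ into (one or two copies of) a translate of $\Delta_3$ carrying the sign pattern of $l$, together with unit parallelograms of the three allowed types, such that the induced signs on the whole triangle agree with $f$.

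The case $m=2$ is the cleanest: by Proposition~\ref{prop:bmult equals mult for quadratic polynomials}, $\bmult^\S_l(f)=2$ forces $f\in l\cdot h$ where $h$ is obtained from the square coefficients of $f$ and, moreover, $h$ agrees with $l$ up to a unit in $\S^*$, so after rescaling $f\in l\cdot l$, i.e. (up to scaling variables) $f=(1+x+y)^{\boxplus 2}$-type, and one checks directly that $f^\sgn$ is the sign pattern of $(x+y+1)^2\in\R[x,y]$; then $\widetilde f=(t^{a_0}+t^{a_1}x+t^{a_2}y)^2$ for generic $a_i$ is strictly convex with angular component $f$ and geometric multiplicity $2$. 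The case $m=1$ is the crux. Here I get a factorization $f\in l\cdot g$ with $g$ the (unique, by the quadratic argument) polynomial determined by the square coefficients; the content is to perturb the coefficients of $f$ so that the Newton subdivision becomes a sign-compatible mixed subdivision with one unmixed $\Delta_3$-cell carrying the signs of $l$. I expect to argue combinatorially: there are only finitely many sign patterns of dense Newton-degree-$2$ polynomials in two variables (up to the symmetries of $2\Delta_3$), so in principle one reduces to a finite check; more cleanly, I would note that $f\in l\cdot g$ over $\S$ in two variables already encodes enough sign data on the six lattice points of $2\Delta_3$ to place a translate of the $l$-triangle at a vertex of $\Newt(g)$ and complete it to a sign-compatible mixed subdivision using the three allowed parallelogram types, then read off $\widetilde f$ from this subdivision.

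The main obstacle is the $m=1$ case: I must verify that whenever $f$ is dense of degree two and $l$ divides $f$ over $\S$, the relevant mixed subdivision actually exists with signs matching $f$ — this is exactly the kind of thing that fails in higher degree (cf.\ Example~\ref{ex:relative mult can be strictly larger than perturbation mult}, where a degree-$3$ dense polynomial has $\mult^\sgn_l=1$ but $\pmult^\S_l=0$), so the proof must genuinely use that the degree is $2$. Concretely, the obstruction to $\pmult$ being positive is the nonexistence of a positively- or negatively-signed unmixed triangle in any sign-compatible mixed subdivision; for degree two the Newton polytope $2\Delta_3$ has only one ``interior'' lattice point-free structure and the unique candidate subdivision (into four unit triangles, three of which are ``upward'') leaves essentially no freedom, so I expect the sign constraints coming from $f\in l\cdot g$ to be automatically satisfiable. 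I would finish by collapsing the purple cells as in the example following Remark~\ref{rem:permutation multiplicity and subdivisions} to exhibit $W$ explicitly and confirm $V(\widetilde f)=W+V(\widetilde l)$ as $\S$-enriched hypersurfaces.
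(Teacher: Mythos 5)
Your overall strategy mirrors the paper's: establish $\pmult \le \mult^\sgn \le \mult^\S \le \bmult$ and then show the reverse inequality $\bmult \le \pmult$ by producing strictly convex lifts to $\TR[x,y]$. However, there are a few genuine problems.

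First, a citation slip with teeth: the inequality $\pmult_l^\S(f)\le \mult^\sgn_l(f)$ is \emph{not} what Corollary~\ref{cor:comparison of multiplicity with perturbation multiplicity} gives (that gives $\pmult_l^\S(f)\le \mult^\S_l(f)$). You need Proposition~\ref{prop:smult leq rel-mult} here. Without it, showing $\bmult\le\pmult$ only yields $\pmult=\mult^\S=\bmult$, and you would still lack a lower bound pinning down $\mult^\sgn$. This is easy to fix but, as written, leaves the chain of equalities unjustified.

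Second, the perturbation you propose for $m=2$, namely $\widetilde f = (t^{a_0}+t^{a_1}x+t^{a_2}y)^2$, is never strictly convex: the valuation of the coefficient of $x^iy^j$ is exactly $(2-i-j)a_0 + i a_1 + j a_2$, which is affine in $(i,j)$, so all lifted points lie on a single plane and the Newton subdivision is trivial. Thus this $\widetilde f$ is not in the set $\mathcal{F}$ from Definition~\ref{def:perturbation multiplicity}, and it gives no lower bound on $\pmult$. The fix is either to take a product $\widetilde l_1\cdot\widetilde l_2$ of two \emph{distinct} lifts of $1+x+y$ with generically placed tropical lines (their product is a single strictly convex polynomial by Lemma~\ref{lem:additivity of enriched PL functions}), or, as the paper does, to exhibit the explicit sign-compatible subdivision of $2\Delta_3$ into four unit triangles.

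Third, and most substantively, the $m=1$ case is not actually proven. You correctly identify it as the crux and that the obstruction seen in Example~\ref{ex:relative mult can be strictly larger than perturbation mult} must fail in degree $2$, but you only offer two unexecuted strategies. The claim that ``the unique candidate subdivision\ldots leaves essentially no freedom'' is also not accurate: $2\Delta_3 = \Delta_3 + \Delta_3$ has several inequivalent mixed subdivisions, and the paper in fact needs \emph{different} ones for its Cases 2 and 4. The paper closes the gap by using the symmetries of $2\Delta_3$ (permuting variables, negating variables) to reduce the $64$ dense quadratic sign patterns to $4$ cases and then verifying $\pmult=\bmult$ directly in each, with the realizing subdivisions shown in Figure~\ref{fig:4cases}. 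Your plan, conditioning on $m=\bmult\in\{0,1,2\}$ rather than on the sign pattern, is a reasonable alternative organization, but you would still need to perform the finite check (reduce $f$ to normal form and exhibit a concrete sign-compatible subdivision) to complete it.
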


\begin{proof}
In light of the inequalities from Proposition~\ref{prop:smult leq rel-mult}, Proposition~\ref{prop:comparison relative multiplicity and hyperfield multiplicity}, and Corollary~\ref{cor:mult leq bmult}, it suffices to show that
\[
\pmult_l^\S(f)=\bmult_l^\S(f) .
\]
    There are $64$ dense polynomials in $\S[x,y]$ of Newton-degree $2$, but using symmetry we can group these into $4$ cases.
    First, consider the corners of the Newton polytope. By multiplying everything by $-1$, we may assume that either $2$ or $3$ of the corners are $+$. Additionally, if we view these sign arrangements as a homogeneous polynomial $f(x,y,z) \in \S[x,y,z]$ then we can make use of the symmetries $x \leftrightarrow y$, $x \leftrightarrow z$ and $y \leftrightarrow z$ to permute the corners arbitrarily.
    This splits the $64$ polynomials into two categories:
    \[
        \begin{matrix}
            + &   &   \\
            * & * &   \\
            + & * & +
        \end{matrix}
       \qquad \text{ and } \qquad\quad
        \begin{matrix}
            + &   &   \\
            * & * &   \\
            + & * & -
        \end{matrix}.
    \]

    Secondly, we have the symmetries $x \leftrightarrow -x$, $y \leftrightarrow -y$ and $z \leftrightarrow -z$ which affect the middle signs as indicated in Figure~\ref{fig:middle-symmetries}.
    \begin{figure}[htbp]
        \centering
        \begin{tikzpicture}
            \matrix[matrix of math nodes]{
                + &   &   \\
                |(Y)| * & |(X)| * &   \\
                + & |(Z)| * & - \\
            };
            \draw[<-] (Y.west) -- ++(-0.5,0) node[left] {row$\times (-1)$};
            \draw[<-] (X.north) -- ++(0,0.5) node[above] {column$\times (-1)$};
            \draw[<-] (Z.south east) -- ++(0.4,-0.4) node[below right] {diagonal$\times (-1)$};
        \end{tikzpicture}
        \caption{Transformations $x \leftrightarrow -x, y \leftrightarrow -y, z \leftrightarrow - z$.}
        \label{fig:middle-symmetries}
    \end{figure}
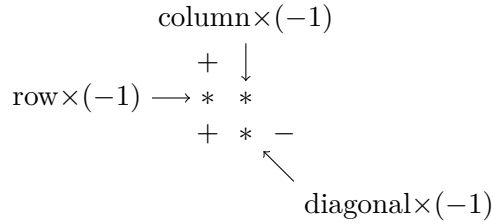
    Using these symmetries, we can assume that at least $2$ of the middle signs are $+$, and that leaves us with just $4$ cases which we number as in Figure~\ref{fig:4cases}.

    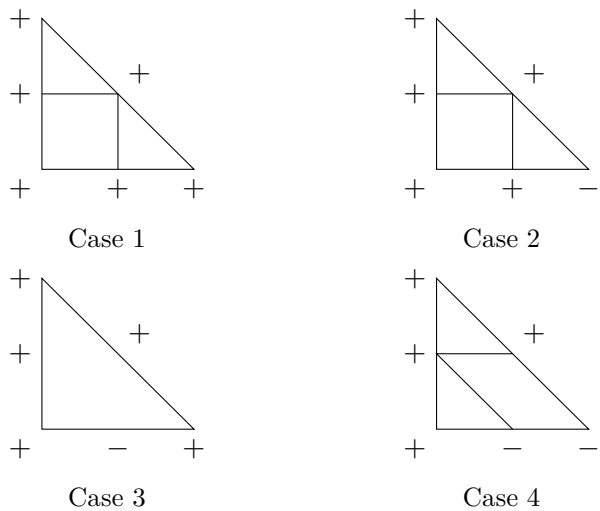
\begin{figure}
        \centering
            \begin{subfigure}{0.4\textwidth}
            \centering
                \begin{tikzpicture}
                    \draw (0,0) -- (0,2) -- (2,0) -- cycle;
                    \draw (0,1) -- (1,1) -- (1,0);
                    \foreach \i in {1, 2} {
                        \draw (0,\i) node[left] {$+$};
                        \draw (\i,0) node[below] {$+$};
                    }
                    \draw (0,0) node[below left] {$+$};
                    \draw (1,1) node[above right] {$+$};
                \end{tikzpicture}
                \caption*{Case 1}
            \end{subfigure}
            \begin{subfigure}{0.4\textwidth}
            \centering
                \begin{tikzpicture}
                    \draw (0,0) -- (0,2) -- (2,0) -- cycle;
                    \draw (0,1) -- (1,1) -- (1,0);
                    \foreach \i in {1, 2} {
                        \draw (0,\i) node[left] {$+$};
                    }
                    \draw (1,0) node[below] {$+$};
                    \draw (2,0) node[below] {$-$};
                    \draw (0,0) node[below left] {$+$};
                    \draw (1,1) node[above right] {$+$};
                \end{tikzpicture}
                \caption*{Case 2}
            \end{subfigure}
            \begin{subfigure}{0.4\textwidth}
            \centering
                \begin{tikzpicture}
                    \draw (0,0) -- (0,2) -- (2,0) -- cycle;
                    \foreach \i in {1, 2} {
                        \draw (0,\i) node[left] {$+$};
                    }
                    \draw (1,0) node[below] {$-$};
                    \draw (2,0) node[below] {$+$};
                    \draw (0,0) node[below left] {$+$};
                    \draw (1,1) node[above right] {$+$};
                \end{tikzpicture}
                \caption*{Case 3}
            \end{subfigure}
            \begin{subfigure}{0.4\textwidth}
            \centering
                \begin{tikzpicture}
                    \draw (0,0) -- (0,2) -- (2,0) -- cycle;
                    \draw (1,0) -- (0,1) -- (1,1);
                    \foreach \i in {1, 2} {
                        \draw (0,\i) node[left] {$+$};
                    }
                    \draw (1,0) node[below] {$-$};
                    \draw (2,0) node[below] {$-$};
                    \draw (0,0) node[below left] {$+$};
                    \draw (1,1) node[above right] {$+$};
                \end{tikzpicture}
                \caption*{Case 4}
            \end{subfigure}
        \caption{The 4 cases of Newton-degree $2$ sign configurations and subdivisions.}
        \label{fig:4cases}
    \end{figure}

    We now need to show that $\pmult^\S_l(f)=\bmult^\S_l(f)$ for all Newton-degree-$1$ polynomials $l\in \S[x,y]$. After scaling, we may assume that $l=1+sx+ty$ for some $s,t\in \S^*$. In all four cases, the constant, the $y$, and the $y^2$ coefficient are positive, so $\bmult^S_l(f)=0$ unless $t=1$. In case $1$, we have $\bmult^S_l(f)=0$ if $s=-1$ and 
    \[
\bmult^S_l(f)=2=\pmult_l^\S(f)
    \]
    if $s=+1$, where the subdivision realizing the perturbation multiplicity is depicted in Figure~\ref{fig:4cases}. In case 3, we have $\bmult_l^\S(f)=0$ for any choice of $s$. In cases 2 and 4, we have
    \[
    \bmult_l(f)=1=\pmult_l(f) 
    \]
    for all $s\in \S^*$, where the subdivision realizing the perturbation multiplicity is depicted in Figure~\ref{fig:4cases} (the same subdivision works for both choices of $s$).
\end{proof}

\begin{example}
    In dimension at least $3$, there exist dense quadratic polynomials with $\mult^\sgn_{1+\sum x_i}(f)<\mult^\S_{1+\sum x_i}(f)$. To see this, consider the polynomial 
    \[
    f=1+x+y-z-xy-xz+yz-x^2+y^2-z^2\in \S[x,y,z] . 
    \]
    Let $l=1+x+y+z$. Then we check that
    \[
    f\in (1+x+y+z)(1-x+y-z)
    \]
    over $\S$ and hence $\mult^\S_l(f)=1$. Now assume $\mult^\sgn_l(f)=1$. Then there exist polynomials 
    $g, h\in \R[x,y,z]$ with $(g\cdot h)^\sgn= f$ and $g^\sgn=l$. After first scaling $g$ such that its constant coefficient is $1$ and then rescaling each variable, we may assume that $g=1+x+y+z$.
    Write $h=a+bx+cy+dz$ for $a,b,c,d\in \R$. Looking at the coefficients of $x$, $xy$, $yz$, and $z$ in $gh$ we obtain the inequalities
    \begin{align*}
        a+b &> 0 \\
        b+c &<0 \\
        c+d &>0 \\
        a+d &<0 , \\
    \end{align*}
    which leads to the contradiction
    \[
    a>-b>c>-d>a . \qedhere
    \]
\end{example}

\begin{theorem}
Let $f\in \S[x,y]$ be a (not necessarily dense) polynomial of Newton-degree $2$, and let $l\in \S[x,y]$ be of Newton-degree $1$. Then we have
\[
\mult^\sgn_l(f)=\mult_l^\S(f)= \bmult_l^\S(f) .
\]
\end{theorem}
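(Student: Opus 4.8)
The plan is to use the two comparisons already available. Proposition~\ref{prop:comparison relative multiplicity and hyperfield multiplicity} gives $\mult^\sgn_l(f)\le\mult^\S_l(f)$, and Proposition~\ref{prop:bmult equals mult for quadratic polynomials} (whose proof nowhere uses density) gives $\bmult^\S_l(f)=\mult^\S_l(f)$, so everything reduces to proving $\mult^\sgn_l(f)\ge\mult^\S_l(f)$. Set $m=\mult^\S_l(f)$; I must produce $\tilde f\in\R[x,y]$ with $\tilde f^\sgn=f$ having at least $m$ linear factors of sign pattern $l$. After scaling the variables by signs and rescaling $l$, assume $l=1+x+y$, and write $\alpha,\beta,\gamma\in\S^*$ for the coefficients of $1,x^2,y^2$ in $f$ (nonzero because $\Newt(f)=2\Delta_3$) and $\delta,\epsilon,\zeta\in\S$ for the coefficients of $x,y,xy$. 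The case $m=0$ is trivial. For $m\ge 1$, the analysis in the proof of Proposition~\ref{prop:bmult equals mult for quadratic polynomials} shows that the unique possible quotient is $g=\alpha+\beta x+\gamma y$, that $f\in l\cdot g$ is equivalent to $\delta\in\alpha\boxplus\beta$, $\epsilon\in\alpha\boxplus\gamma$, $\zeta\in\beta\boxplus\gamma$, that $m\le 2$, and that $m=2$ exactly when $\alpha=\beta=\gamma$.

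The heart of the proof is to lift $g$. Up to negating $f$ and interchanging $x\leftrightarrow y$ (neither of which changes $\mult^\sgn_l$ or $\mult^\S_l$) I may assume $\alpha=+$ and $(\beta,\gamma)\in\{(+,+),(+,-),(-,-)\}$, with $m=2$ only in the first case. I seek $a,b,c\in\R^*$ with $\sgn(a)=\alpha$, $\sgn(b)=\beta$, $\sgn(c)=\gamma$ and $\sgn(a+b)=\delta$, $\sgn(a+c)=\epsilon$, $\sgn(b+c)=\zeta$; then $\tilde f:=(1+x+y)(a+bx+cy)$ satisfies $\tilde f^\sgn=f$ and is divisible by $1+x+y$, whose sign pattern is $l$, so $\mult^\R_{\sgn^{-1}\{l\}}(\tilde f)\ge 1$, while for $\alpha=\beta=\gamma=+$ one takes $a=b=c=1$ and gets $\tilde f=(1+x+y)^2$ with $\mult^\R_{\sgn^{-1}\{l\}}(\tilde f)=2$; either way $\mult^\R_{\sgn^{-1}\{l\}}(\tilde f)\ge m$ and hence $\mult^\sgn_l(f)\ge m$. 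Producing $a,b,c$ is the only computation and it is easy: whenever two of $\alpha,\beta,\gamma$ agree the relevant hypersum is a single sign and the condition on the matching sum $a_i+a_j$ holds automatically (the two summands then have that sign), while whenever two disagree the hypersum is all of $\S$, so the coefficient of $f$ there is unconstrained and the condition merely prescribes which of $\lvert a_i\rvert,\lvert a_j\rvert$ is larger or that they are equal. Since the disagreeing pairs always share the coordinate with the odd sign, these prescriptions decouple and are met by $a=b=c=1$ in case $(+,+,+)$; by $c=-1$ with $a,b>0$ chosen so that $\sgn(a-1)=\epsilon$ and $\sgn(b-1)=\zeta$ in case $(+,+,-)$; and by $a=1$ with $b,c<0$ chosen so that $\sgn(1+b)=\delta$ and $\sgn(1+c)=\epsilon$ in case $(+,-,-)$. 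One checks the six sign conditions directly in each case.

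The only obstacle is conceptual: one has to see that in Newton-degree $2$ the corner coefficients already pin down $g$, so the only freedom in the lift lies in the magnitudes $\lvert a\rvert,\lvert b\rvert,\lvert c\rvert$, and the surviving sign constraints form a star (a tree), never a cycle. This is precisely what fails in higher degree; in Example~\ref{ex:hyperfield multiplicity larger than relative multiplicity} the corresponding constraints close up into the cyclic chain $1<a<c<d<e<b<1$, which is contradictory. So the real content of the theorem is the three-case verification above that no such cycle can occur for quadratics in two variables. (Homogenizing $f$ and $l$ and exploiting the symmetry that permutes the corners $x_0^2,x_1^2,x_2^2$ would reduce this to two cases.)
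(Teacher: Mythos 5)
Your proof is correct, and it takes a genuinely different organizational route from the paper's. The paper's proof first handles the dense case separately as Theorem~\ref{thm:multiplicity for dense quadratic poly in 2 variables} (going through perturbation multiplicities and the classification of sign-compatible mixed subdivisions, Figure~\ref{fig:4cases}), and then in this theorem treats only the non-dense cases by a second case analysis on the zero pattern of the middle coefficients (Figure~\ref{fig:3cases non-dense quadratic}), exhibiting explicit real lifts in each of those three cases. Its chain of inequalities is $\mult^\sgn_l(f)\le\mult^\S_l(f)\le\bmult^\S_l(f)$, closed by showing $\mult^\sgn_l(f)\ge\bmult^\S_l(f)$.

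You instead invoke Proposition~\ref{prop:bmult equals mult for quadratic polynomials} at the outset (you are right that its proof uses only Newton-degree $2$, never density, so it yields $\bmult^\S_l(f)=\mult^\S_l(f)$ for all quadratics in two variables). This removes the dense/non-dense split entirely and reduces everything to producing a lift $\tilde f=(1+x+y)(a+bx+cy)$, parametrized by $a,b,c\in\R^*$, with prescribed corner signs $\alpha,\beta,\gamma$ and prescribed signs of the pairwise sums $a+b,a+c,b+c$. Your three cases on $(\beta,\gamma)$ together with the explicit choices of $a,b,c$ check out, and the observation that the unconstrained pair-sum conditions share the vertex with the odd sign (hence form a star, never a cycle) is a nice conceptual explanation of why the lift always exists here — and of why it fails in Example~\ref{ex:hyperfield multiplicity larger than relative multiplicity} and in the three-variable quadratic example (the latter being the one where the cycle $a>-b>c>-d>a$ actually occurs; the chain you quote is from the degree-$3$ example). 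What your approach buys is a single uniform argument with a smaller case analysis and no appeal to the perturbation machinery; what it gives up is the finer information about $\pmult^\S_l$ that the paper's proof of the dense case establishes alongside. One small remark: the statement of Proposition~\ref{prop:bmult equals mult for quadratic polynomials} has a typo ($\S$ should read $H$ throughout, since its proof works for a general hyperfield); you apply it with $H=\S$, so no harm done.
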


\begin{proof}
    By Theorem~\ref{thm:multiplicity for dense quadratic poly in 2 variables} we only need to treat the cases where $f$ is not dense, and by Proposition~\ref{prop:comparison relative multiplicity and hyperfield multiplicity} and Corollary~\ref{cor:mult leq bmult} is suffices to show that 
    \[
    \mult^\sgn_l(f) = \bmult_l^\S(f)
    \]
    If a coefficient of a middle term (e.g.\ $x$) in $f$ is zero, then $\bmult_l^\S(f)$ is zero unless the coefficients of the adjacent corners of the Newton polytope (e.g.\ $1$ and $x^2$) have different signs. Therefore, if all three middle terms of $f$ are zero, we have $\bmult_l^\S(f)=0$. We may thus assume that either one or two middle terms are zero. After interchanging variables (as in the proof of Theorem~\ref{thm:multiplicity for dense quadratic poly in 2 variables}), we may assume that either only the $x$-coefficient is zero or the $x$- and $y$-coefficient are both zero. After scaling $f$ by a unit, we may assume that the constant coefficient is $1$, in which case we may assume that the $x^2$-coefficient is $-1$. If the $y$-coefficient is also zero, we may also assume that the $y^2$-coefficient is $-1$. Using the transformations $x\leftrightarrow -x$ and $y\leftrightarrow -y$ we may assume that the non-zero middle terms have coefficient $1$. This leaves us with three cases for $f$, as depicted in Figure~\ref{fig:3cases non-dense quadratic}. 
    After rescaling $l$, we may assume that the constant coefficient of $l$ is $1$. 
    
    In case 1, we have $\bmult_l^\S(f)=0$ unless $l=1\pm x+y$, in which case $\bmult_l^\S(f)=1$. We also have
    \[
    f=\big( (1+ x+y)(1-x+2y) \big)^\sgn.
    \]
    This shows that $\pmult_l^\S(f)=1$ for either choice of $l$. 

    In case 2, we have $\bmult_l^\S(f)=0$ unless $l=1\pm x \mp y$, in which case $\bmult_l^\S(f)=1$. We also have
    \[
    f=\big( (1+ x-y)(1-x+2y) \big)^\sgn.
    \]
    This shows that $\pmult_l^\S(f)=1$ for either choice of $l$.

    In case 3, we have $\bmult_l^\S(f)=0$ unless $l=1\pm x \mp y$, in which case $\bmult_l^\S(f)=1$. We also have
    \[
    f=\big( (1+ x-y)(1-x+y) \big)^\sgn.
    \]
    This shows that $\pmult_l^\S(f)=1$ for either choice of $l$.
\end{proof}

    \begin{figure}
        \centering
            \begin{subfigure}{0.3\textwidth}
            \centering
    \begin{tikzpicture}[baseline=(current bounding box.center)]
        \matrix[matrix of math nodes]{
        + &   &     \\
        + & + &     \\
        + & 0 & -   \\
        }; 
    \end{tikzpicture}
                \caption*{Case 1}
    \end{subfigure}
            \begin{subfigure}{0.3\textwidth}
            \centering
    \begin{tikzpicture}[baseline=(current bounding box.center)]
        \matrix[matrix of math nodes]{
        - &   &     \\
        + & + &     \\
        + & 0 & -   \\
        }; 
    \end{tikzpicture}
    \caption*{Case 2}
     \end{subfigure}
            \begin{subfigure}{0.3\textwidth}
            \centering
    \begin{tikzpicture}[baseline=(current bounding box.center)]
        \matrix[matrix of math nodes]{
        - &   &     \\
        0 & + &     \\
        + & 0 & -   \\
        }; 
    \end{tikzpicture}
                \caption*{Case 3}
            \end{subfigure}
        \caption{The 3 non-dense cases needed to be checked after all reductions.}
        \label{fig:3cases non-dense quadratic}
    \end{figure}

\begin{example}
If $f\in \S[x,y]$ is quadratic but not dense, and $l\in \S[x,y]$ has degree $1$, it is possible that $\pmult^\S_l(f)< \mult^\S_l(f)$. For example, consider the polynomial
\[
f(x,y)= 1-x^2+xy-y^2\in \S[x,y]
\]
and let 
\[
l(x,y)=1+x-y .
\]
Then we have $\mult^\S_l(f)=\bmult^\S_l(f)=1$. On the other hand, the only subdivision of the Newton polytope of $f$ that appears as the Newton subdivision of a strictly convex polynomial in $\angular^{-1}\{l\}$ is depicted in Figure~\ref{fig:only Newton subdivision}.
Since the tropical hypersurface associated to any polynomial $h\in\TR[x,y]$ with that Newton subdivision can never contain a tropical line, we have
$\gmult^\K_{\angular^{-1}\{l\}}(h)=0$ and hence $\mult^{\TR}_{\angular^{-1}\{l\}}(h)=0$ by Lemma~\ref{lem:multipliciy bounded by geometric multiplicity}. In particular, we have 
\[
\pmult_l^\S(f)=0<1=\mult^\S_l(f). \qedhere
\]
\end{example}

\begin{figure}
    \centering
    \begin{tikzpicture}
                    \draw (0,0) -- (0,2) -- (2,0) -- cycle;
                    \draw (0,0) -- (1,1);
                    \draw (2,0) node[below] {$-$};
                    \draw (0,2) node[left] {$-$};
                    \draw (0,0) node[below left] {$+$};
                    \draw (1,1) node[above right] {$+$};
                \end{tikzpicture}
    \caption{The only Newton subdivision including the support of $1-x^2+xy-y^2$ as vertices.}
    \label{fig:only Newton subdivision}
\end{figure}
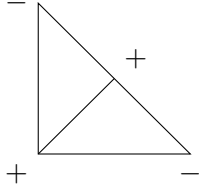

\section{Systems of equations over hyperfields}
\label{sec:systems}

Let $K$ be a field with a morphism $\varphi\colon K\to H$ to a hyperfield $H$, let $f_1,\ldots, f_n\in H[x_1,\ldots,x_n]$, and let $\bm h\in (H^*)^n$. In this section, we study the number
\[
N_{\bm h}^\varphi(f_1,\ldots, f_n)=\max\left\{\left\vert\bigcap V(g_i)\cap \varphi^{-1}\{\bm h\}\right\vert : g_i^\varphi=f_i,~\left\vert\bigcap V(g_i)\right\vert<\infty\right\}  .
\]
In the case where $H=\S$ (resp.\ $H=\T$), this is the maximum number of solutions with given signs (resp.\ given valuations) that a system of equations with given supports and signs (resp.\ valuations) can have, provided it has finitely many solutions.
Our technique to bound this number is via sparse resultants, which translate the problem of finding solutions to a system of equations into the problem of finding linear factors of a single multivariate polynomial.

\subsection{Sparse resultants}

Let $A_0,\ldots, A_n$ be subsets of $\Z_{\geq 0}^n$. For each $0\leq i\leq n$ and $\bm a\in A_i$ introduce a variable $c_{i,\bm a}$. Then the (sparse mixed) resultant $R=\result{A_0,\ldots, A_n}$ of $A_0,\ldots, A_n$ is the unique (up to scaling) irreducible integer polynomial in the variables $c_{i,\bm a}$, which vanishes precisely when the intersection
\begin{equation}
\label{eq:intersection of n+1 hypersurfaces}
\bigcap_{i=0}^n V\left(\sum_{\bm a\in A_i}c_{i,\bm a} \bm x^{\bm a}\right)\cap (K^*)^n 
\end{equation}
is nonempty for some (and hence any) algebraically closed field $K$ of characteristic $0$.
We expect the intersection to be nonempty on a codimension $1$ set because there is one more equation than variables ($x_1,\dots,x_n$). Only if the codimension is indeed $1$ the resultant is well-defined; otherwise one sets $R=1$. For more on resultants, we refer the reader to the book of Gelfand-Kapranov-Zelevinsky~\cite{GKZ}. The resultants we use here are the mixed $(A_0, \dots, A_n)$-resultants covered in Chapter~8 of their book.

Given $n+1$ polynomials in $n$-variables, say $g_i=\sum_{\bm a\in A_i} d_{i,\bm a} \bm x^{\bm a}\in H[\bm x]$ for $0\leq i\leq n$ over some hyperfield $H$, we denote by $\result{g_0,\ldots, g_n}$ the set (we get a set because hyperaddition is multivalued) of polynomials obtained by substituting $d_{i,\bm a}$ for $c_{i,\bm a}$ in $\result{A_0,\ldots, A_n}$. If only $n$ polynomials in $n$ variables are given, say the polynomials $g_1,\ldots, g_n$ with the expressions as before, we introduce new variables $y_1, \dots, y_n$ and set 
\[
\result{g_1,\ldots, g_n}= \result{1+\sum y_ix_i,g_1,\ldots,g_n}\subseteq H[\bm y],
\]
substituting $y_i$ for the variables $c_{0,\bm e_i}$ corresponding to 
\[
A_0=\{0\}\cup \{\bm e_i :  1\leq i\leq n\},
\]
where $\bm e_i$ denotes the $i$-th standard basis vector in $\Z_{\geq 0}^n$.

The fact that resultants translate the problem of finding solutions to systems of equations to the problem of finding linear factors of a polynomial already mentioned above, is made precise in the following lemma.

\begin{lemma}
\label{lem:factors of resultant}
    Let $K$ be a field of characteristic $0$ and let $\varphi\colon K\to H$ be a morphism of hyperfields. Moreover, let $\bm h\in (H^*)^n$, let $l=1+\sum_{i=1}^n h_ix_i\in H[\bm x]$, and let $g_1,\ldots, g_n\in K[\bm x]$ generic with respect to their support and such that $R=\result{g_1,\ldots, g_n}$ is not constant. Then we have
    \[
      \left\vert \bigcap_{i=1}^n V(g_i) \cap \varphi^{-1}\{\bm h\}  \right\vert = \mult^K_{\varphi^{-1}\{l\}}(R) .
    \]
\end{lemma}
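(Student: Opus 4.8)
\emph{Proof plan.} The plan is to combine the classical product expansion of the sparse resultant with unique factorisation in $K[\bm y]$. The first step is to invoke the Poisson-type product formula for mixed sparse resultants (\cite{GKZ}, Chapter~8), which is exactly the proportionality displayed in the introduction: since $g_1,\ldots,g_n$ are generic for their supports and $R=\result{1+\sum y_ix_i,g_1,\ldots,g_n}$ is nonconstant, the common zero set $Z:=\bigcap_{i=1}^n V(g_i)\subseteq(\overline K^{*})^n$ is finite and there is a scalar $c\in K^{*}$ with
\[
R=c\prod_{\bm a\in Z}\Bigl(1+\sum_{i=1}^n a_iy_i\Bigr).
\]
Genericity forces every point of $Z$ to be a simple intersection point, so each linear form above occurs with multiplicity exactly one, and two distinct points of $Z$ give non-associate factors since all these factors have constant term $1$. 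Hence the displayed product is, up to the unit $c$, the prime factorisation of $R$ in $\overline K[\bm y]$.

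The second step is to read off $\mult^K_{\varphi^{-1}\{l\}}(R)$. Since $\varphi$ is a morphism of hyperfields, $\varphi^{-1}(0)=\{0\}$, so any $k\in\varphi^{-1}\{l\}\subseteq K[\bm y]$ has the form $k=b_0+\sum_i b_iy_i$ with $b_0,b_i\in K^{*}$, $\varphi(b_0)=1$ and $\varphi(b_i)=h_i$; dividing by the unit $b_0$ replaces $k$ by the associate form $1+\sum_i(b_i/b_0)y_i$, and $\bm a_k:=(b_i/b_0)_i$ then satisfies $\bm a_k\in(K^{*})^n$ and $\varphi(\bm a_k)=\bm h$. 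Conversely every $\bm a\in(K^{*})^n$ with $\varphi(\bm a)=\bm h$ arises in this way from $1+\sum_i a_iy_i\in\varphi^{-1}\{l\}$. Since $K[\bm y]$ is a UFD in which products are single-valued, unwinding Definition~\ref{def:hyperfield multiplicity} (cf.\ Theorem~\ref{intro:bounding number of factors}) shows that $\mult^K_{\varphi^{-1}\{l\}}(R)$ equals the number of pairwise non-associate prime divisors of $R$ that are associate to some element of $\varphi^{-1}\{l\}$. By the first step these prime divisors are exactly the forms $1+\sum_i a_iy_i$ with $\bm a\in Z$, and such a form is associate to an element of $\varphi^{-1}\{l\}$ precisely when $\bm a\in(K^{*})^n$ and $\varphi(\bm a)=\bm h$, i.e.\ when $\bm a\in Z\cap\varphi^{-1}\{\bm h\}$. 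As $\varphi^{-1}\{\bm h\}\subseteq(K^{*})^n$, this number is $\bigl|\bigcap_{i=1}^n V(g_i)\cap\varphi^{-1}\{\bm h\}\bigr|$, which is the desired identity.

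The step requiring the most care is the passage between $K$ and $\overline K$: the product formula lives over $\overline K$, so one must verify that the simple $\overline K$-linear factors of $R$ that are actually defined over $K$ are, after the $b_0$-normalisation above, in bijection with the classes of $\varphi^{-1}\{l\}$ modulo units that divide $R$, and that each such factor is picked up with multiplicity exactly one --- which is where the genericity hypothesis (reducedness of $Z$) enters. The remaining ingredients --- finiteness of $Z$, the product formula itself, and the translation of $\mult^K_{\varphi^{-1}\{l\}}$ into a statement about factoring in the UFD $K[\bm y]$ --- are standard, the last being a direct consequence of Definition~\ref{def:hyperfield multiplicity}.
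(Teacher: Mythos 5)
Your argument is correct and follows essentially the same route as the paper's: both proofs rest on the product expansion $R \propto \prod_{\bm a\in Z}(1+\sum a_iy_i)$ coming from the genericity of the $g_i$, and then match the linear factors of $R$ with the intersection points whose $\varphi$-image is $\bm h$. The only difference is cosmetic: you spell out the unique-factorisation argument for reading off $\mult^K_{\varphi^{-1}\{l\}}(R)$ (non-associate primes, normalising by the constant term, descent from $\overline K[\bm y]$ to $K[\bm y]$), whereas the paper compresses this into the single observation that $\varphi(p_j)=\bm h$ iff $(1+\sum p_{ji}y_i)^\varphi = l$.
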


\begin{proof}
Because the coefficients of the $g_i$ are generic with respect to their supports, the intersection
\[
\bigcap_{i=1}^n V(g_i) 
\]
is transverse and consists of $D\coloneqq \deg(R)$ many distinct points
\[
p_j=(p_{j1},\ldots p_{jn})\in (\overline K^*)^n , \qquad 1\leq j\leq D.
\]
 Then the intersection
\[
\bigcap_{i=1}^n V(g_i) \cap V\left(1+\sum_{i=1}^n y_ix_i\right)
\]
is nonempty if and only if
\[
1+\sum_{i=1}^n p_{ji}y_i =0
\]
for some $1\leq j\leq D$, which happens, by definition of the resultant, if and only if
\[
R(y_1,\ldots, y_n)=0 .
\]
Because $D$ is the degree of $R$, it follows that $R$ differs from 
\[
\prod_{j=1}^D \left(1+\sum_{i=1}^n p_{ji}y_i\right)
\]
by a unit. The assertion now follows from the observation that $\varphi(p_j)= \bm h$ if and only if
\[
\left( 1+\sum_{i=1}^n p_{ji}y_i\right)^\varphi = l . \qedhere
\]
\end{proof}

An important observation in the proof of the preceding lemma is that a resultant $\result{g_1, \dots, g_n}$ is (up to a unit), the product of the linear forms $1 + \sum p_{ji} y_i$ corresponding to the common roots $p_j$ of the system
\[
g_1(\bm x)=\ldots= g_n(\bm x)=0 
\]
in the algebraic closure of the ground field.
Let us illustrate this with an example.

\begin{example}
    Take the line $f(x, y) = 3x + 4y - 5$ and intersect it with the circle $g(x, y) = x^2 + y^2 - 1$. These two polynomials have one intersection point $[3 : 4 : 5] \in \P^2$, with multiplicity $2$. The resultant of $f$ and $g$ in the variables $u, v$ is therefore proportional to $(3u + 4v + 5)^2$.

    We can compute this in the Singular computer algebra system~\cite{sing} using the \texttt{mpresmat} function.
    \begin{verbatim}
    system("random", 12341234);
        // other seeds lead to different monomial factors
    ring R = 0,(u,v),dp;
    ring S = R,(x,y),dp;
    ideal I = 3x + 4y - 5, x2 + y2 - 1, 1 + ux + vy;
    string s = string(det(mpresmat(I, 0)));
        // use a string to get this polynomial from S to R
        // s = (9u2+24uv+30u+16v2+40v+25)
    setring R;
    execute("poly p = " + s);
    factorize(p);
        // Output (factors and multiplicities)
        // [1]:
        //    _[1]=1
        //    _[2]=3u+4v+5
        // [2]:
        //    1,2
    \end{verbatim}
    \vspace*{-\dimexpr2\baselineskip + \topsep + \partopsep}
\end{example}

\subsection{Tropically transverse intersections}

We will now study the cases where $H=\T$ or $H=\TR$, where $\varphi\colon K\to H$ is either a valuation $\nu$ or a signed valuation $\vsign$, and where the intersection 
\[
\bigcap_{i=1}^n V(f_i^\nu)
\]
in $\R^n$ is transverse. Recall that this means that $\bigcap^n_{i=1}V(f_i^\nu)$ is finite and every $\bm h \in \bigcap_{i=1}^n V(f_i^\nu)$ is contained in the relative interior of a maximal cell of $V(f_i^\nu)$ for all $1\leq i \leq n$.

For every choice of $g_i\in \varphi^{-1}\{f_i\}$ and $\bm h\in \bigcap_{i=1}^n V(g_i)\cap (K^*)^n$ we then have $\varphi(\bm h)\subseteq \bigcap V(f_i^\nu)$. Therefore, we have
\[
N_{\bm h}^\varphi(f_1,\ldots, f_n)=0 
\]
for all $\bm h\notin \bigcap_{i=1}^n V(f_i^\nu)$.  

Now suppose $\bm h\in \bigcap_{i=1}^n V(f_i^\nu)$. Then for every $1\leq i\leq n$, the initial form $\initial_{\bm h}(f_i)$ is a binomial, say $f_i=a_i\bm x^{\bm s_i}-b_i\bm x^{\bm t_i}$. We define the  \emph{intersection multiplicity} $m^\K(\bm h; f_1^\nu\cdots f_n^\nu))$  as
\[
m^\K(\bm h; f_1^\nu\cdots f_n^\nu)= \left\vert\left(
\begin{array}{c}
\bm s_1 - \bm t_1  \\ \hline
\vdots \\ \hline
\bm s_n - \bm t_n
\end{array}
\right)
\right\vert.
\]

\begin{lemma}[{\cite[Lemma 3.2]{HuberSturmfels}}]
\label{lem:solutions to binomial system}
    Let $\bm h\in \bigcap_{i=1}^n V(f_i^\nu)$ and for $1\leq i\leq n$ let $g_i$ be polynomials with $g_i^{\nu_0}=\initial_{\bm h}(f_i)^{\nu_0}$ over an algebraically closed field of characteristic $0$. Then $\bigcap_{i=1}^n V(g_i)$ contains precisely $m^\K(\bm h;f_1\cdots f_n)$ many distinct points.
\end{lemma}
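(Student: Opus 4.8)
The plan is to reduce the binomial system $g_1=\dots=g_n=0$ to a multiplicative system on the torus $(K^*)^n$ and then count solutions via the Smith normal form, using that $K$ is algebraically closed of characteristic $0$.

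First I would record that since $\bm h$ lies in the relative interior of a maximal cell of $V(f_i^\nu)$, the initial form $\initial_{\bm h}(f_i)$ is a genuine binomial, so $\initial_{\bm h}(f_i)^{\nu_0}$ has support $\{\bm s_i,\bm t_i\}$ with $\bm s_i\neq\bm t_i$, and each $g_i$ may be written as $g_i=a_i\bm x^{\bm s_i}+b_i\bm x^{\bm t_i}$ with $a_i,b_i\in K^*$. On $(K^*)^n$ the equation $g_i=0$ is equivalent to $\bm x^{\bm u_i}=c_i$ with $\bm u_i\coloneqq \bm s_i-\bm t_i$ and $c_i\coloneqq -b_i/a_i\in K^*$. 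Writing $U$ for the integer $n\times n$ matrix with rows $\bm u_1,\dots,\bm u_n$ --- so that $m^\K(\bm h;f_1\cdots f_n)=\lvert\det U\rvert$ --- the common zero set $\bigcap_{i=1}^n V(g_i)$ in $(K^*)^n$ is exactly the fiber over $\bm c=(c_1,\dots,c_n)$ of the group homomorphism
\[
\Phi_U\colon (K^*)^n\to (K^*)^n,\qquad \bm x\longmapsto (\bm x^{\bm u_1},\dots,\bm x^{\bm u_n}).
\]

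Next I would observe that transversality of the tropical intersection at $\bm h$ forces $\det U\neq 0$: the maximal cell of $V(f_i^\nu)$ through $\bm h$ spans the affine hyperplane on which $\langle\bm u_i,\cdot\rangle$ is constant, so near $\bm h$ the intersection $\bigcap_i V(f_i^\nu)$ is a relatively open neighbourhood of $\bm h$ inside $\bm h+\bigcap_i\bm u_i^\perp$; finiteness of the intersection then forces $\bigcap_i\bm u_i^\perp=\{0\}$, i.e. $\det U\neq 0$. With this in hand I would invoke a Smith normal form $U=ADB$ with $A,B\in\mathrm{GL}_n(\Z)$ and $D=\operatorname{diag}(d_1,\dots,d_n)$, all $d_i\neq 0$, which exhibits $\Phi_U$ as the composition of the torus automorphism induced by $B$, the coordinatewise power map $(y_1,\dots,y_n)\mapsto(y_1^{d_1},\dots,y_n^{d_n})$, and the torus automorphism induced by $A$. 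Because $K$ is algebraically closed of characteristic $0$, the group $K^*$ is divisible and each map $y\mapsto y^{d_i}$ on $K^*$ is surjective with kernel of cardinality $\lvert d_i\rvert$ (the $\lvert d_i\rvert$-th roots of unity). Hence $\Phi_U$ is surjective with $\lvert\ker\Phi_U\rvert=\prod_i\lvert d_i\rvert=\lvert\det U\rvert$, so the fiber over $\bm c$ is a nonempty coset of $\ker\Phi_U$, of cardinality $\lvert\det U\rvert=m^\K(\bm h;f_1\cdots f_n)$, as claimed.

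I expect the only real obstacle to be the non-vanishing $\det U\neq 0$, which is precisely where the transversality hypothesis enters; everything after that is the classical fact that a binomial system has $\lvert\det\rvert$ torus solutions over an algebraically closed field. Alternatively, once the initial binomials have been identified as above, the statement is verbatim \cite[Lemma~3.2]{HuberSturmfels}, so one may also simply cite that reference.
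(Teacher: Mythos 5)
Your proof is correct, and it is a faithful reconstruction of the standard argument: the paper itself gives no proof of this lemma, relegating it entirely to the citation of Huber--Sturmfels, and the Smith-normal-form/divisibility argument you supply is exactly the content of that reference. Your preliminary observation that transversality forces $\det U\neq 0$ is the one point the citation leaves implicit, and you handle it correctly. Since you also note at the end that one could simply cite, your proposal and the paper's treatment agree in substance; yours is merely the unpacked version.
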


Now suppose that $f_i\in \TR[\bm x]$, and still assume that $V(f_1^\nu),\ldots,V(f_n^\nu)$ intersect transversally. Let $\bm h\in \nu^{-1}\bigcap_{i=1}^n V(f_i^\nu) \subseteq (\TR^*)^n$. Then $\initial_{\nu(\bm h)}(f_i)$ is a binomial for all $1\leq i\leq n$. Following \cite{IR}, we say that $\bm h$ is \emph{alternating}\label{def:alternating} if the two coefficients of the binomial $\initial_{\nu(\bm h)}(f_i)$ have opposite signs for all $1\leq i\leq n$. If $\angular(\bm h)=(1,\ldots,1)$, we define the signed multiplicity $m^\S(\bm h;f_1\cdots f_n)$ by
\[
m^\S(\bm h;f_1\cdots f_n)= 
\begin{cases}
    1 & \text{if }\bm h\in \bigcap_{i=1}^n V(f_i^\nu) \text{ and }\bm h \text{ is alternating}, \\
    0 & \text{else.}
\end{cases}
\]
For general $\bm h$, let $\vert \bm h\vert= (\angular(h_1)h_1,\ldots, \angular(h_n)h_n)$ and for $1\leq i\leq n$ denote 
\[
f_i^{\bm h}(x_1,\ldots,x_n)=f_i(\angular(h_1)x_1,\ldots, \angular(h_n)x_n) ,
\]
where we identify $\S^*$ with $\nu^{-1}\{0\} = \{\pm t^0\} \subseteq\TR$. The signed multiplicity is then given by
\[
m^\S(\bm h;f_1\cdots f_n)= m^\S(\vert \bm h\vert; f_1^{\bm h}\cdots f_n^{\bm h}).
\]

\begin{lemma}[{\cite[Lemma 2]{IR}}]
\label{lem:at most one positive root of binomial system}
Let $K$ be a real closed field. Suppose we have binomials $g_1,\ldots, g_n\in K[\bm x]$ such that the affine span of all the Newton polytopes of the $g_i$ is $\R^n$. If for some $1\leq i\leq n$, the coefficients of the two monomials of $g_i$ have the same sign, then the intersection
\[
\bigcap_{i=1}^n V(g_i) \cap (K_{>0}) ^n
\]
is empty. Otherwise, it is a singleton.

In particular, suppose $f_1,\ldots, f_n\in \TR[\bm x]$ and $\bm h\in (\TR^*)^n$ are such that $V(f_1^\nu),\ldots, V(f_n^\nu)$ intersect transversally at $\nu(\bm h)$. If $g_i\in \sign^{-1}\{\initial_{\nu(\bm h)}(f_i)\}$, then we have
\[
\left\vert\bigcap_{i=1}^n V(g_i) \cap \sign^{-1}\{\angular(\bm h)\}\right\vert= m^\S(\bm h;f_1\cdots f_n) .
\]
\end{lemma}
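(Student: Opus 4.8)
The plan is to prove the two-part statement for binomials first and then read off the tropical ``in particular'' from it. For the first assertion, I would start by writing each binomial as $g_i = a_i\bm x^{\bm s_i} + b_i\bm x^{\bm t_i}$ with $a_i,b_i\in K^*$ and $\bm s_i\neq\bm t_i$, and set $\bm m_i := \bm s_i - \bm t_i\in\Z^n$. Dividing by the nonvanishing monomial $\bm x^{\bm t_i}$ shows that $V(g_i)\cap(K_{>0})^n = \{\bm x\in(K_{>0})^n : \bm x^{\bm m_i} = -b_i/a_i\}$. If $\sgn(a_i)=\sgn(b_i)$ for some $i$, the right-hand side is negative while the left-hand side is positive on $(K_{>0})^n$, so that factor alone makes the whole intersection empty. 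Otherwise $r_i := -b_i/a_i > 0$ for every $i$, and we are reduced to counting the solutions in $(K_{>0})^n$ of the monomial system $\bm x^{\bm m_i}=r_i$, $1\le i\le n$.

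Next I would solve this monomial system. The hypothesis on the Newton polytopes is exactly what guarantees that $\bm m_1,\dots,\bm m_n$ are linearly independent, i.e.\ that the integer matrix $M$ with rows $\bm m_i$ is invertible over $\Q$. Because $K$ is real closed, the ordered group $(K_{>0},\cdot)$ is torsion-free and, using the intermediate value property applied to the polynomials $y^k-r$, divisible; hence it is a $\Q$-vector space, and $r^q\in K_{>0}$ is well defined for $r\in K_{>0}$, $q\in\Q$. The monomial map $L\colon(K_{>0})^n\to(K_{>0})^n$, $\bm x\mapsto(\bm x^{\bm m_1},\dots,\bm x^{\bm m_n})$, is then $\Q$-linear with matrix $M$ in ``logarithmic coordinates'', and since $M$ is invertible over $\Q$ it admits the explicit two-sided inverse $\bm r\mapsto\big(\prod_i r_i^{(M^{-1})_{ji}}\big)_j$. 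Thus $L$ is bijective and $\bm x^{\bm m_i}=r_i$ has exactly one solution in $(K_{>0})^n$, which proves the first part.

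For the second part I would first normalize via the substitution $x_j\mapsto\angular(h_j)x_j$, which carries $(K_{>0})^n=\sgn^{-1}\{(1,\dots,1)\}$ onto $\sgn^{-1}\{\angular(\bm h)\}$, replaces each $g_i$ by a binomial in $\sgn^{-1}\{\initial_{\nu(\bm h)}(f_i^{\bm h})\}$, and --- by the very definition of $m^\S(\bm h;f_1\cdots f_n)$ --- leaves the quantity to be computed, the transversality, and $\bm w := \nu(\bm h)$ unchanged; so we may assume $\angular(\bm h)=(1,\dots,1)$. Transversality at $\bm w$ puts $\bm w$ in the relative interior of a facet of each $V(f_i^\nu)$, so $\initial_{\bm w}(f_i)$ is supported on an edge $[\bm s_i,\bm t_i]$ of the Newton subdivision and is therefore a genuine binomial, as is $g_i$; moreover, near $\bm w$ the hypersurface $V(f_i^\nu)$ coincides with the affine hyperplane through $\bm w$ normal to $\bm m_i=\bm s_i-\bm t_i$, so the isolatedness of $\bm w$ in the finite set $\bigcap_i V(f_i^\nu)$ forces $\bm m_1,\dots,\bm m_n$ to be linearly independent --- exactly the hypothesis of the first part. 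Finally, $\bm h$ is alternating precisely when every $\initial_{\bm w}(f_i)$, hence every $g_i$, has coefficients of opposite signs; so the first part gives that $\bigcap_i V(g_i)\cap\sgn^{-1}\{\angular(\bm h)\}$ is a singleton when $\bm h$ is alternating and empty otherwise, which --- since $\bm w\in\bigcap_i V(f_i^\nu)$ by hypothesis --- is exactly the value of $m^\S(\bm h;f_1\cdots f_n)$.

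The hard part is the second step: upgrading ``$K$ real closed'' to ``the monomial endomorphism of $(K_{>0})^n$ is bijective''. Over $\R$ this is just taking logarithms, but for a general real closed field one has to extract $k$-th roots via the intermediate value property and exclude nontrivial positive roots of unity via the ordering, so that $(K_{>0},\cdot)$ becomes a $\Q$-vector space on which the invertible integer matrix $M$ acts invertibly. Everything else --- the reduction to a monomial system, the standard dictionary between transversal tropical intersections and binomial initial forms, and the comparison with the combinatorial definition of $m^\S$ --- should be routine bookkeeping.
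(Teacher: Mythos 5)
Your argument is correct, and it is more self-contained than the paper's: the paper's proof simply cites \cite[Lemma 2]{IR} for the first assertion about positive common roots of a binomial system and then sketches precisely the reduction you give for the ``in particular'' part (normalize by $x_j\mapsto\angular(h_j)x_j$ to reduce to $\angular(\bm h)=(1,\ldots,1)$, then invoke the first part). What you add is a direct proof of the binomial assertion: clear the monomial $\bm x^{\bm t_i}$ to reduce to $\bm x^{\bm m_i}=r_i$, observe that $(K_{>0},\cdot)$ is torsion-free (from the ordering) and divisible (from the intermediate value property for $y^k-r$ in a real closed field, or equivalently from square roots of positives together with roots of odd-degree polynomials), hence a $\Q$-vector space, and conclude that the monomial map given by the invertible integer matrix $M$ is a bijection on $(K_{>0})^n$. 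That is exactly the content of the cited lemma, so you have reproved it rather than relied on it. The one caveat worth flagging is your claim that the Newton-polytope hypothesis is ``exactly'' the linear independence of the $\bm m_i$: this is correct only if ``affine span of all the Newton polytopes'' is read as the affine span of the Minkowski sum $\sum_i[\bm s_i,\bm t_i]$ --- the affine span of the \emph{union} of the edges can be all of $\R^n$ even when the $\bm m_i$ are parallel, in which case the solution set is not a singleton. The Minkowski-sum reading is the intended one (it is the only one under which the conclusion holds), and in the ``in particular'' application you derive the independence directly from tropical transversality anyway, so the argument stands.
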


\begin{proof}
    The statement about the positive common roots of the $g_i$ is proven in \cite[Lemma 2]{IR}. The ``in particular'' statement follows directly from that in the case where $\angular(\bm h)=(1,\ldots, 1)$. The general case is reduced to that case by the coordinate change $x_i\mapsto \angular(h_i)x_i$.
\end{proof}

We have the following relationship between the initial form of a resultant and the resultant of initial forms.

\begin{proposition}
\label{prop:initial of resultant}
Let $(K,\nu)$ be a valued field of characteristic $0$, equipped with a splitting of the valuation, and let $g_i\in K[\bm x]$ for $1\leq i\leq n$. Assume that $V(g_1^\nu),\ldots, V(g_n^\nu)$ intersect transversally at $\bm h\in \R^n$. Then $\initial_{-\bm h}\result{g_1,\ldots,g_n}$ and 
    $\result{\initial_{\bm h}(g_1),\ldots, \initial_{\bm h}(g_n)}$
differ by a polynomial $q$ with \[\mult_{\nu_0^{-1}\{1+\sum_{j=1}^n x_j\}}(q)=0. \]
\end{proposition}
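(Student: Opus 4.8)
The plan is to express both polynomials as specializations of the universal sparse resultant and then feed the weight coming from $\nu$ and $\bm h$ into Sturmfels' description of initial forms of resultants \cite{sturmfels-resultants}. Write $A_0=\{\bm 0\}\cup\{\bm e_1,\dots,\bm e_n\}$, $A_i=\supp(g_i)$, $g_i=\sum_{\bm a\in A_i}d_{i,\bm a}\bm x^{\bm a}$, and let $R=\result{A_0,\dots,A_n}\in\Z[c_{i,\bm a}]$, so that $\result{g_1,\dots,g_n}$ is the image of $R$ under $c_{0,\bm 0}\mapsto 1$, $c_{0,\bm e_j}\mapsto y_j$, $c_{i,\bm a}\mapsto d_{i,\bm a}$ for $i\ge 1$. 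First I would introduce the weight $\tilde\omega$ on the $c_{i,\bm a}$ given by $\tilde\omega(c_{0,\bm 0})=0$, $\tilde\omega(c_{0,\bm e_j})=-h_j$ and $\tilde\omega(c_{i,\bm a})=\nu(d_{i,\bm a})$ for $i\ge 1$, and check by a routine comparison of $\nu$-weighted orders that $\initial_{-\bm h}(\result{g_1,\dots,g_n})$ is the image of $\initial_{\tilde\omega}(R)$ under $c_{0,\bm 0}\mapsto 1$, $c_{0,\bm e_j}\mapsto y_j$, $c_{i,\bm a}\mapsto\angular(d_{i,\bm a})$, provided this image is nonzero; nonvanishing will follow from the rest.

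Next I would invoke Sturmfels: the weight $\tilde\omega$ induces a coherent mixed subdivision of $\operatorname{conv}(A_0)+\dots+\operatorname{conv}(A_n)$, and up to a monomial $\initial_{\tilde\omega}(R)=\prod_\sigma R_\sigma$, the product over its maximal cells $\sigma=F_0+\dots+F_n$, where $R_\sigma=\result{F_0,\dots,F_n}$ involves only the $c_{i,\bm a}$ with $\bm a\in F_i$ (and $R_\sigma=1$ for non-essential cells). Transversality of $\bigcap_i V(g_i^\nu)$ at $\bm h$ says exactly that each $\initial_{\bm h}(g_i)$ is a binomial, supported on an edge $E_i=[\bm s_i,\bm t_i]$ of $A_i$ with $\bm s_1-\bm t_1,\dots,\bm s_n-\bm t_n$ linearly independent. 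The crucial combinatorial observation is that there is a \emph{unique} maximal cell $\sigma_0$ whose $A_0$-part is all of $\operatorname{conv}(A_0)=\Delta_{n+1}$: a lower face of the $\tilde\omega$-lift of an $n$-simplex equals the whole simplex only in the one lower normal direction to its affine hull, and $\tilde\omega(c_{0,\bm e_j})=-h_j$ forces that direction to be $(\bm h,1)$, in which the lower face of $\operatorname{conv}(A_i)$ is exactly $E_i$; thus $\sigma_0=\Delta_{n+1}+E_1+\dots+E_n$. Substituting the $\angular(d_{i,\bm a})$ into $R_{\sigma_0}$ gives precisely $\result{\initial_{\bm h}(g_1),\dots,\initial_{\bm h}(g_n)}$, and by Lemma~\ref{lem:solutions to binomial system} the binomial system $\initial_{\bm h}(g_1)=\dots=\initial_{\bm h}(g_n)=0$ has $m^\K(\bm h;g_1^\nu\cdots g_n^\nu)\ge 1$ distinct simple torus solutions, so this is a nonzero product of that many distinct linear forms, each with all $n+1$ coefficients nonzero.

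Then I would control the cofactor. Any maximal cell $\sigma\ne\sigma_0$ has $F_0$ a proper face of $\Delta_{n+1}$, hence omitting a vertex. If $F_0$ omits some $\bm e_j$, then the (specialized) $R_\sigma$ does not involve $y_j$. If $F_0$ omits only $\bm 0$, then $R_\sigma$ does not involve $c_{0,\bm 0}$ and, being homogeneous of positive degree in the block $(c_{0,\bm e_j})_j$ \cite{GKZ}, it specializes to a nonzero homogeneous polynomial in $\bm y$ of positive degree (the resultant of an edge-supported system, which — like the binomial system above — never degenerates). Either way the specialization of $R_\sigma$ is nonzero and has no irreducible factor $c_0+\sum_j c_jy_j$ with all $c_j$ nonzero (i.e.\ with $\nu_0$-image $1+\sum_j x_j$), since such a form involves every $y_j$ and has a nonzero constant term. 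Collecting the leftover monomial and unit into a factor $q$ together with these specialized $R_\sigma$ ($\sigma\ne\sigma_0$), unique factorization in $\kappa[\bm y]$ yields $\initial_{-\bm h}(\result{g_1,\dots,g_n})=q\cdot\result{\initial_{\bm h}(g_1),\dots,\initial_{\bm h}(g_n)}$ with $\mult_{\nu_0^{-1}\{1+\sum_{j=1}^n x_j\}}(q)=0$.

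The hard parts will be the two global inputs. One is stating and applying Sturmfels' theorem with exactly the right bookkeeping — in particular that $R_{\sigma_0}$ enters $\initial_{\tilde\omega}(R)$ to the first power, so that no full-support linear factor is hidden in $q$. The other is the commutation of $\initial_{-\bm h}(-)$ with the coefficient specialization $c_{i,\bm a}\mapsto\angular(d_{i,\bm a})$, which fails for arbitrary specializations but holds here because every cell-resultant $R_\sigma$ remains nonzero after specialization; this in turn rests on the non-degeneracy of binomial and, more generally, edge-supported systems (Lemma~\ref{lem:solutions to binomial system}).
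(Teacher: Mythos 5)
Your proof follows essentially the same route as the paper: weight the universal sparse resultant using $(0,-\bm h,\nu(d_{i,\bm a}))$, apply Sturmfels' theorem on initial forms of resultants, identify the distinguished factor as $\result{\initial_{\bm h}(g_1),\ldots,\initial_{\bm h}(g_n)}$ entering to the first power, and rule out full-support linear factors in the remaining cell-resultants because their $A_0$-faces omit at least one vertex of $\Delta_{n+1}$, using the nonvanishing of binomial-system resultants (Lemma~\ref{lem:solutions to binomial system}) to justify that the specialization commutes with taking initial forms. The only real difference is cosmetic: you phrase Sturmfels' factorization as a product over maximal cells of a coherent mixed subdivision, while the paper states it as a product over vertices of the common refinement of the tropical hypersurfaces $V(g_0^\nu),\ldots,V(g_n^\nu)$ — these are the same statement under the standard duality between Newton subdivisions and tropical hypersurfaces, and the uniqueness of your $\sigma_0$ corresponds in the paper to the observation that $\bm h$ is the unique vertex where $\initial_{\bm v}(g_0)$ has full support.
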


\begin{proof}
For $1\leq i \leq n$ denote the support of $g_i$ by $A_i$ and let 
\[
A_0=\{0\} \cup \{\bm e_i: 1\leq i\leq n\},
\]
where $\bm e_i$ denotes the $i$-th standard basis vector.
Moreover, let $R=\result{A_0,\ldots, A_n}$ be the resultant of the supports, which is a polynomial in coefficients $c_{i,\bm a}$, where $0\leq i\leq n$ and $\bm a \in A_i$. We defined $\result{g_1,\ldots, g_n}$ as a polynomial in variables $y_1,\ldots, y_n$, but in this proof we will substitute $c_{0,\bm e_i}$ for $y_i$ and view $\result{g_1,\ldots, g_n}$ as a polynomial in the variables $c_{0,\bm e_1},\ldots, c_{0,\bm e_n}$. Then $\result{g_1,\ldots, g_n}$ is obtained by plugging $1$ for $c_{0,\bm 0}$ and $d_{i, \bm a}$ for $c_{i, \bm a}$ for $i>0$ and $\bm a \in A_i$ into $R$. We note that $R$ is homogeneous in the coefficients $c_{0,\bm 0},c_{0,\bm e_1},\ldots c_{0,\bm e_n}$, so plugging in $1$ for $c_{0,\bm 0}$ amounts to dehomogenizing. Therefore, $\initial_{-\bm h}(\result{g_1,\ldots,g_n})$ is equal to the polynomial we obtain by plugging in $1$ for $c_{0,\bm 0}$ into the initial form
\[
\initial_{(0,-\bm h)}R\left((c_{0,\bm a})_{\bm a \in A_0} , (d_{i,\bm a})_{i>0, \bm a\in A_i}\right),
\]
where the additional $0$ in $(0,\bm h)$ means that we give $c_{0,\bm 0}$ weight zero.
Let $\bm w=(0,-\bm h,(\nu(d_{i, \bm a}))_{i>0,\; \bm a \in A_i})$. We view $\bm w$ as a weight on $\R^{\bigsqcup_{i=0}^n A_i}$. If for a monomial $M$ of $R$, we denote $M'=M((c_{0,\bm a})_{\bm a \in A_0}, (d_{i, \bm a})_{i>0,\bm a \in A_i})$, then the $\bm w$-weight of $M$ with respect to the trivial valuation $\nu_0$ equals the $(0,-\bm h)$-weight of $M'$ with respect to $\nu$ (note that $R$ has integer coefficients). It follows that if 
\[
(\initial^0_{\bm w}(R))((c_{0,\bm a})_{\bm a \in A_0}, (\angular(d_{i, \bm a}))_{i>0,\; \bm a \in A_i})\neq 0,
\]
where the superscript $0$ in $\initial^0$ indicates that we take the initial form with respect to the trivial valuation,  then we have
\begin{align*}
&\initial_{(0,-\bm h)}(R( (c_{0,\bm a})_{\bm a \in A_0}, (d_{i, \bm a})_{i>0,\; \bm a \in A_i})) \\
&\quad= (\initial^0_{\bm w}(R))((c_{0,\bm a})_{\bm a \in A_0}, (\angular(d_{i, \bm a}))_{i>0,\; \bm a \in A_i}) .
\end{align*}
To finish the proof, we compute $(\initial^0_{\bm w}(R))((c_{0,\bm a})_{\bm a \in A_0}, (\angular(d_{i, \bm a}))_{i>0,\; \bm a \in A_i})$ and, in particular, show that it is non-zero.
To this end, let $g_0=1+\sum_{i=1}^n c_{0,\bm e_i}x_i$, and let $\Delta$ be the polyhedral complex in $\R^n$, the relative interior of whose faces are precisely the equivalence classes of the relation
\[
\bm w_1\sim \bm w_2 \Longleftrightarrow \initial_{\bm w_1}(g_i)=\initial_{\bm w_2}(g_i)\text{ for all }0\leq i\leq n .
\]
Here, we give weight $-h_i$ to the coefficient $c_{0, \bm i}$ of $x_i$ in $g_0$. Note that  $\Delta$ coincides with the intersection of the $n+1$ complexes on $\R^n$ induced by the tropical hypersurfaces $V(g_i^\nu)$. 
By \cite[Theorem 4.1]{sturmfels-resultants}, we have 
\[
    (\initial^0_{\bm w}(R))((c_{0,\bm a})_{\bm a \in A_0}, (\angular(d_{i, \bm a}))_{i>0,\; \bm a \in A_i})=\pm\prod_{\bm v} R_{\bm v}^{d_{\bm v}} ,
\]
where the product runs over all vertices $\bm v$ of $\Delta$, and where
\[
R_v=\result{\initial_{\bm v}(g_0) , \initial_{\bm v} (g_1),\ldots \initial_{\bm v}(g_n)} 
\]
and the $d_v$ are positive integers that can be computed explicitly in terms of the supports of the $\initial_v(g_i)$.

The resultant $R_{\bm v}$ is a monomial if at least one of the $\initial_{\bm v}(g_i)$ is a monomial. Therefore, the set of vertices $\bm v$ for which $R_{\bm v}$ is not a monomial is contained in the set $S$ defined by $S=\bigcap_{i=0}^n V(g_i^\nu)$. 
For each $\bm v\in S$ the polynomials $\initial_{\bm v}(g_i)$ for $1\leq i\leq n$ are binomials that intersect in finitely many points, by Lemma~\ref{lem:solutions to binomial system}, no matter how we vary their coefficients.
Therefore, $R_{\bm v}\neq 0$. Moreover, for $\bm h\neq \bm v \in S$ the initial form $\initial_{\bm v}(g_0)$ has support strictly smaller than the support of $g_0$. As $R_{\bm v}$ is a product of polynomials with the same support as $\initial_{\bm v}(g_0)$, this implies that
\[
\mult_{\nu_0^{-1}\{1+\sum_{j=1}^n x_j\}}(R_{\bm v})=0 .
\]
Finally, according to \cite[Theorem 4.1]{sturmfels-resultants} we have $d_{\bm h}=1$ because $\initial_{\bm h}(g_0)$ and $g_0$ have the same support and the support of  $g_0$ spans $\Z^n$. 
\end{proof}

\begin{theorem}
\label{thm:intersection multiplicity in transversal case}
Let $K$ be an algebraically closed valued field or  a real closed valued field with compatible valuation, with residue field $\kappa$. Let $H=\kappa/\kappa^2$ (either $\K$ or $\S$). Let $\overline\varphi\colon \kappa\to H$ denote the quotient morphism, and let $\varphi\colon K\to H \rtimes \R$ denote the composite $K\xrightarrow{\vangular} \kappa \rtimes \R \xrightarrow{\varphi\rtimes \R} H \rtimes \R$. Furthermore, let $f_1,\ldots, f_n\in (H \rtimes \R)[\bm x]$ be such that $V(f_1^\nu),\ldots, V(f_n^\nu)$ intersect transversally, and let $\bm h\in ((H \rtimes \R)^*)^n$. Then we have
\[
    N_{\bm h}^\varphi(f_1,\ldots, f_n)=m^H(\bm h;f_1\cdots f_n) .
\]
In fact, for every generic choice of $g_i\in \varphi^{-1}\{f_i\}$ for $1\leq i\leq n$ we have
\[
   \left\vert \bigcap_{i=1}^n V(g_i) \cap \varphi^{-1}\{\bm h\}\right\vert= m^H(\bm h;f_1\cdots f_n) .
\]
\end{theorem}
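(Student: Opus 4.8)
The plan is to route everything through the sparse resultant, descend to the residue field by taking initial forms, and finish with the known count of solutions of a binomial system.

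First I would clear away the trivial cases and normalize. If $\bm w := \nu(\bm h)\notin \bigcap_{i=1}^n V(f_i^\nu)$, both sides vanish (the left by the discussion preceding the theorem, the right by definition of $m^H$), so assume $\bm w\in\bigcap V(f_i^\nu)$. Applying the diagonal transformation $x_i\mapsto \angular(h_i)x_i$ of Definition~\ref{def:poly-morphism} — which replaces each $f_i$ by $f_i^{\bm h}$, commutes with $\varphi$ and with $\nu$, and is compatible with the defining formulas for $m^\K$ and $m^\S$ — I may assume $\angular(\bm h)=(1,\dots,1)$. Writing $l_{\bm h}=1+\sum_i h_ix_i$, this makes $\initial_{-\bm w}(l_{\bm h})=1+\sum_i x_i$, so that $\overline\varphi(\initial_{-\bm w}(l_{\bm h}))=1+\sum_i x_i\in H[\bm x]$.

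Next, the generic statement. Fix generic $g_i\in\varphi^{-1}\{f_i\}$; this is legitimate because $\ker(\varphi|_{K^*})$ (resp.\ $\ker(\overline\varphi|_{\kappa^*})$) is Zariski dense in $K$ (resp.\ in $\kappa$), so the various open dense conditions used below can be met within the fibre. By Lemma~\ref{lem:factors of resultant}, $\bigl|\bigcap V(g_i)\cap\varphi^{-1}\{\bm h\}\bigr|=\mult^K_{\varphi^{-1}\{l_{\bm h}\}}(R)$ for $R=\result{g_1,\dots,g_n}$, and $R$ is, up to a unit, a product of linear forms over $\overline K$. By Proposition~\ref{prop:initial of resultant}, $\initial_{-\bm w}(R)$ differs from $\result{\initial_{\bm w}(g_1),\dots,\initial_{\bm w}(g_n)}$ by a factor $q$ with $\mult_{\nu_0^{-1}\{1+\sum_j x_j\}}(q)=0$; transversality forces every $\initial_{\bm w}(f_i)$, hence every $\initial_{\bm w}(g_i)$, to be a binomial, so the common solutions of the binomial system are a coset of a finite group — in particular pairwise distinct — and consequently $\initial_{-\bm w}(R)$ has no repeated linear factor of full-simplex support. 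This lets me apply Proposition~\ref{prop:initial ideal gives multiplicity for product of linear forms} (when $K$ is algebraically closed) or Proposition~\ref{prop:initial ideal gives multiplicity for product of linear forms, real case} (when $K$ is real closed, whose multiplicity-$1$ hypothesis is exactly the statement just verified) to get
\[
\mult^K_{\varphi^{-1}\{l_{\bm h}\}}(R)=\mult^\kappa_{\overline\varphi^{-1}\{1+\sum_j x_j\}}\bigl(\initial_{-\bm w}(R)\bigr).
\]
Since $\mult^\kappa_{\overline\varphi^{-1}\{1+\sum_j x_j\}}$ is additive over products and vanishes on $q$, this equals $\mult^\kappa_{\overline\varphi^{-1}\{1+\sum_j x_j\}}\bigl(\result{\initial_{\bm w}(g_1),\dots,\initial_{\bm w}(g_n)}\bigr)$. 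Applying Lemma~\ref{lem:factors of resultant} once more, this time over $\kappa$ to the binomial system $\initial_{\bm w}(g_1)=\dots=\initial_{\bm w}(g_n)=0$, identifies the last quantity with the number of its common roots in $(\kappa^*)^n$ with residue class $(1,\dots,1)$ under $\overline\varphi$; by Lemma~\ref{lem:solutions to binomial system} this count is $m^\K(\bm h;f_1\cdots f_n)$ in the algebraically closed case, and by Lemma~\ref{lem:at most one positive root of binomial system} it is $m^\S(\bm h;f_1\cdots f_n)$ in the real closed case. This proves the second displayed equation of the theorem.

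Finally, the equality $N^\varphi_{\bm h}(f_1,\dots,f_n)=m^H(\bm h;f_1\cdots f_n)$ follows: the inequality $\ge$ is immediate from the generic statement, and for $\le$ I invoke Theorem~\ref{thm:upper bound by resultant} together with the bound $\mult^{H\rtimes\R}_{l_{\bm h}}(r)\le m^H(\bm h;f_1\cdots f_n)$ for every $r\in\result{f_1,\dots,f_n}$, obtained by running the same initial-form analysis over the hyperring $H\rtimes\R_{\ge0}$ (using that $\initial_{-\bm w}$ respects products, Proposition~\ref{prop:initial of resultant}, and the binomial-system lemmas). I expect the main obstacle to be precisely this last point — controlling $N^\varphi_{\bm h}$ for non-generic $g_i$, i.e.\ ruling out that two distinct common roots with the same $\varphi$-value collapse to a single residual root, and (in the real closed case) verifying the multiplicity-$1$ hypothesis of Proposition~\ref{prop:initial ideal gives multiplicity for product of linear forms, real case}. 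In both cases the leverage is the transversality assumption, which converts "no algebraic collision at the prescribed residue" into the clean combinatorial fact that the initial forms $\initial_{\bm w}(f_i)$ are binomials and hence $\initial_{-\bm w}(R)$ is, up to the harmless factor $q$, a product of pairwise distinct linear forms.
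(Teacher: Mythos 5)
Your derivation of the ``in fact'' (generic) statement is essentially the same as the paper's: Lemma~\ref{lem:factors of resultant} converts the point count into a resultant multiplicity, Propositions~\ref{prop:initial ideal gives multiplicity for product of linear forms} and~\ref{prop:initial ideal gives multiplicity for product of linear forms, real case} (the multiplicity-$1$ hypothesis of the latter supplied, as you note, by the distinctness of the binomial-system solutions given by Lemma~\ref{lem:solutions to binomial system}) descend that multiplicity to the residue field, Proposition~\ref{prop:initial of resultant} replaces the initial form of the resultant with the resultant of the initial forms, and a second application of Lemma~\ref{lem:factors of resultant} over $\kappa$ together with Lemmas~\ref{lem:solutions to binomial system} and~\ref{lem:at most one positive root of binomial system} finishes the count. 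The preliminary normalizations you perform --- discarding the case $\nu(\bm h)\notin\bigcap V(f_i^\nu)$ and reducing to $\angular(\bm h)=(1,\dots,1)$ via a diagonal substitution --- are not written out in the paper, but they are legitimate uses of Corollary~\ref{cor:morphism-mult} and cost nothing.

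Where you diverge is in the first displayed equality $N^\varphi_{\bm h}=m^H$. The paper derives it directly from the generic computation without a separate upper-bound argument; you split it into $\ge$ (from the generic statement) and $\le$ (from Theorem~\ref{thm:upper bound by resultant} plus the claim $\mult^{H\rtimes\R}_{l_{\bm h}}(r)\le m^H(\bm h;f_1\cdots f_n)$ for all $r\in\result{f_1,\dots,f_n}$). You correctly flag that claim as your weak spot, and the gap is real: Proposition~\ref{prop:initial of resultant} is proved for polynomials over a \emph{valued field} $K$, and transplanting Sturmfels's initial-form theorem to the hyperring $H\rtimes\R_{\ge 0}$ is substantially more than a citation, since hyperfield resultants are sets and the proof of Proposition~\ref{prop:initial of resultant} uses exact cancellation in $\Z[\bm y]$. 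Also, Theorem~\ref{thm:upper bound by resultant} is proved after the present theorem, so the dependency order would need rearranging (there is no actual circularity, since its proof does not use the present theorem, but you should say so). A more economical route to $\le$ stays closer to the tools you already used and avoids resultants entirely: any solution of $g_1=\cdots=g_n=0$ with $\varphi$-value $\bm h$ tropicalizes to $\nu(\bm h)$ and specializes, under the residue map, to a solution of the binomial system $\initial_{\nu(\bm h)}(g_1)=\cdots=\initial_{\nu(\bm h)}(g_n)=0$ with residue $\angular(\bm h)$; transversality makes those reduced solutions simple, which forces the specialization map to be injective, and Lemmas~\ref{lem:solutions to binomial system} and~\ref{lem:at most one positive root of binomial system} then bound the count by $m^H$ for \emph{every} choice of $g_i$, not just generic ones.
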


\begin{remark}
    If $K$ is algebraically closed, then $H \rtimes \R=\T$ and $\varphi=\nu$, and if $\K$ is real closed, then $H \rtimes \R=\TR$ and $\varphi=\vsign$.
\end{remark}

\begin{proof}
For $1\leq i\leq n$ let $g_i\in \varphi^{-1}\{f_i\}$, let $R=\result{g_1,\ldots, g_n}$, and let $l=1+\sum_{i=1}^n h_ix_i\in \TR[\bm x]$. By Lemma~\ref{lem:factors of resultant}, we have 
\[
   \left\vert \bigcap_{i=1}^n V(g_i) \cap \varphi^{-1}\{\bm h\}\right\vert=\mult^K_{\varphi^{-1}\{l\}}(R).
\]
 By Proposition~\ref{prop:initial ideal gives multiplicity for product of linear forms} in the algebraically closed case and Lemma~\ref{lem:solutions to binomial system} and Proposition~\ref{prop:initial ideal gives multiplicity for product of linear forms, real case} in the real closed case, we have
 \[
    \mult^K_{\varphi^{-1}\{l\}}(R)=\mult^\kappa_{\overline\varphi^{-1}\{\initial_{-\nu(\bm h)}(l)\}}(\initial_{-\nu(\bm h)}(R)) .
 \]
 By Proposition~\ref{prop:initial of resultant}, we have 
 \begin{align*}
    &\mult^\kappa_{\overline\varphi^{-1}\{\initial_{-\nu(\bm h)}(l)\}}(\initial_{-\nu(\bm h)}(R)) \\
    &\quad= \mult^\kappa_{\overline\varphi^{-1}\{\initial_{-\nu(\bm h)}(l)\}}(\result{\initial_{\nu(\bm h)}(g_1),\ldots, \initial_{\nu(\bm h)}(g_n)}) ,
\end{align*}
which, again by Lemma~\ref{lem:factors of resultant}, is equal to
 \[
    \left\vert\bigcap_{i=1}^n V(\initial_{\nu(\bm h)}(g_i))\cap  \overline\varphi^{-1}\{\angular(\bm h)\}\right\vert . 
 \]
 By Proposition~\ref{prop:initial ideal gives multiplicity for product of linear forms} in the algebraically closed case and Proposition~\ref{prop:initial ideal gives multiplicity for product of linear forms, real case} in the real closed case, we have
 \[
    \left\vert\bigcap_{i=1}^n V(\initial_{\bm h}(g_i) \cap \overline\varphi^{-1}\{\angular(\bm h)\}\right\vert= m^H(\bm h;f_1^\nu\cdots f_n^\nu) . \qedhere
 \]
\end{proof}

Using some model theory, we can now use our results about the numbers $N^\vsign_{\bm h}(f_1,\ldots,f_n)$ for $f_i\in \TR[\bm x]$ to obtain the following result about the analogous numbers for $f_i\in \S[\bm x]$. As further explained below after Definition~\ref{def:perturbation multiplicity for systems}, we reprove the main Corollary to \cite[Theorem 2]{IR}.

\begin{corollary}
\label{cor:Itenberg roy lower bound}
    Let $K$ be a real closed field and let $f_1,\ldots, f_n\in \TR[\bm x]$ such that the tropical hypersurfaces $V(f_i^\nu)$ intersect transversally. Moreover, let $\bm h\in (\S^*)^n$ and denote
    \[
    G= \angular^{-1}\{\bm h\} \cap \nu^{-1}\left(\bigcap_{i=1}^n V(f_i^\nu)\right)\subseteq (\TR^*)^n .
    \]
    Then we have
    \[
    N^\sign_{\bm h}(f_1^\angular,\ldots, f_n^\angular)\geq \sum_{\bm g\in G} m^\S(\bm g;f_1\cdots f_n).
    \]
\end{corollary}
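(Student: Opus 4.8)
The plan is to combine the exact count for transverse systems over a real closed valued field (Theorem~\ref{thm:intersection multiplicity in transversal case}) with a Tarski transfer argument. First I would dispose of the trivial case: if $\bigcap_{i=1}^n V(f_i^\nu) = \emptyset$ then $G = \emptyset$, the right-hand side vanishes, and there is nothing to prove; so I may assume $G \neq \emptyset$, and since $\bm g \mapsto \nu(\bm g)$ is a bijection from $G$ onto the finite set $\bigcap_{i=1}^n V(f_i^\nu)$, the set $G$ is finite.

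Next I would work over the Hahn series field $L = \R[[t^\R]]$, which is real closed with split valuation, value group $\R$, and residue field $\R$. Equipping $L$ with the signed valuation $\vsign \colon L \to \S \rtimes \R = \TR$, we have $\angular \circ \vsign = \sgn$, so that any lift $g_i \in \vsign^{-1}\{f_i\}$ automatically satisfies $g_i^\sgn = f_i^\angular$. Applying Theorem~\ref{thm:intersection multiplicity in transversal case} at each $\bm g \in G$ shows that for generic lifts one has $\lvert\bigcap_{i=1}^n V(g_i) \cap \vsign^{-1}\{\bm g\}\rvert = m^\S(\bm g; f_1\cdots f_n)$; because $G$ is finite I would fix a single system $g_1,\dots,g_n$, $g_i \in \vsign^{-1}\{f_i\}$, generic enough for all these equalities simultaneously and transverse, hence with finite common zero locus. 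The sets $\vsign^{-1}\{\bm g\}$, $\bm g \in G$, are pairwise disjoint and each is contained in $\sgn^{-1}\{\bm h\}$, since $\vsign(\bm x) = \bm g$ forces $\sgn(\bm x) = \angular(\vsign(\bm x)) = \angular(\bm g) = \bm h$; summing over $\bm g \in G$ gives $\lvert\bigcap_{i=1}^n V(g_i) \cap \sgn^{-1}\{\bm h\}\rvert \ge \sum_{\bm g \in G} m^\S(\bm g; f_1\cdots f_n)$. As $g_i^\sgn = f_i^\angular$ and $\bigcap_{i=1}^n V(g_i)$ is finite, this system is admissible in the definition of $N^\sgn_{\bm h}$, so the inequality holds over $L$.

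Finally I would transfer the inequality to the given real closed field $K$. Fixing the supports $A_i = \supp(f_i^\angular)$, the prescribed coefficient signs, the target vector $\bm h$, and $N = \sum_{\bm g \in G} m^\S(\bm g; f_1\cdots f_n)$, the assertion ``there exist coefficients with the prescribed signs on the $A_i$ whose system has only finitely many common roots in the algebraic closure and at least $N$ common roots all of whose coordinates have the prescribed signs'' is a first-order sentence over real closed fields: finiteness of the solution set may be replaced by the fixed BKK bound ``at most $B$ common roots'', and $K[\sqrt{-1}]$ is uniformly interpretable in $K$. By Tarski's completeness of the theory of real closed fields it then holds over $K$ because it holds over $L$, which is exactly the claimed inequality. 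The step I expect to be most delicate is the genericity bookkeeping in the middle: Theorem~\ref{thm:intersection multiplicity in transversal case} gives the equality $\lvert\bigcap V(g_i) \cap \vsign^{-1}\{\bm g\}\rvert = m^\S(\bm g; f_1\cdots f_n)$ only for generic lifts, with a good locus depending on $\bm g$, so one must verify that the intersection of these finitely many good loci is still nonempty and meets the genericity-with-respect-to-supports condition — which comes down to the fact that the relevant conditions (the resultant factoring into distinct linear forms whose initial forms at $-\nu(\bm g)$ stay distinct) cut out open dense subsets of the space of angular components of the lifts.
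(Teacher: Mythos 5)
Your proof is correct and follows essentially the same route as the paper's: reduce via completeness of the theory of real closed fields to a real closed field with a surjective (split) valuation, invoke Theorem~\ref{thm:intersection multiplicity in transversal case} to count solutions over each fiber $\vsign^{-1}\{\bm g\}$, and sum over the finitely many $\bm g \in G$. The one place you are somewhat more careful than the paper is in explicitly requiring the lifts $g_i$ to be simultaneously generic for all $\bm g \in G$, whereas the paper simply says ``pick $g_i$ with $g_i^\vsign = f_i$'' and cites Theorem~\ref{thm:intersection multiplicity in transversal case} for the last equality; since that theorem only guarantees the equality $\lvert\bigcap V(g_i)\cap\vsign^{-1}\{\bm g\}\rvert = m^\S(\bm g; f_1\cdots f_n)$ for generic lifts, your bookkeeping is the more precise formulation, and the observation that the finitely many generic conditions cut out dense open loci (so one choice works for all $\bm g$) correctly closes the gap. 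The paper works with an arbitrary valued real closed field with surjective valuation rather than fixing $L = \R[[t^\R]]$, but that difference is immaterial.
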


\begin{proof}
    First, note that the inequality
    \[
    N^\sign_{h}(f_1^\angular,\ldots, f_n^\angular)\geq \sum_{\bm g\in G} m^\S(\bm g;f_1\cdots f_n)
    \]
    can be formulated in the language of real closed fields. Since the theory of real closed fields is complete (see e.g.\ \cite[Chapter~3.3]{model-theory}), we may assume that $K$ is a valued real closed field with surjective valuation. We pick, for $1\leq i \leq n$, a polynomial $g_i\in K[\bm x]$ with $g_i^\vsign=f_i$.  Then we have
    \begin{multline*}
    N^\sgn_{\bm h}(f_1^\angular,\ldots, f_n^\angular) \geq 
    \left\vert\bigcap_{i=1}^n V(g_i) \cap \sgn^{-1}\{\bm h\}\right\vert=\\
    = \sum_{\bm g\in G}\left\vert \bigcap_{i=1}^n V(g_i) \cap \vsign^{-1}\{\bm g\}  \right\vert 
    = \sum_{\bm g\in G} m^\S(\bm g; f_1\cdots f_n) ,    
    \end{multline*}
    where the last equality follows from Theorem~\ref{thm:intersection multiplicity in transversal case}.
\end{proof}

\begin{definition}
\label{def:perturbation multiplicity for systems}
Let $f_1,\ldots, f_n \in \S[\bm x]$, let $h\in (\S^*)^n$, and let $\widetilde F$ be the sets of tuples $(\widetilde f_1,\ldots, \widetilde f_n)$ of polynomials $\widetilde f_i\in \TR[\bm x]$ with $\widetilde f_i^\angular= f_i$ and such that $V(\widetilde f_1^\nu), \dots, V(\widetilde f_n^\nu)$ intersect transversally. In analogy to the perturbation multiplicity, we define
\[
\sN_{\bm h}(f_1,\ldots,f_n)= \max\left\{ \sum_{\bm g\in G(\bm h;\widetilde f_1,\ldots, \widetilde f_n)} m^\S(\bm g;\widetilde f_1\cdots \widetilde f_n) ~ : ~ (\widetilde f_i)_i\in  \widetilde F\right\} ,
\]
where 
\[
G(\bm h;\widetilde f_1,\ldots, \widetilde f_n) =\angular^{-1}\{\bm h\}\cap \nu^{-1}\left(\bigcap_{i=1}^n V(\widetilde f_i^\nu)\right) .
\]
\end{definition}

The statement of Corollary~\ref{cor:Itenberg roy lower bound} can now be rephrased as
\begin{equation}
\label{eq:Itenberg Roy bound}
N_{\bm h}^\sgn(f_1,\ldots, f_n)\geq \sN_{\bm h}(f_1,\ldots, f_n) .    
\end{equation}
If we identify $f_i\in \S[\bm x]$ with its signed Newton polytope and $\bm h$ with the orthant of $\R^n$ it determines, then the number $\sN_{\bm h}(f_1,\ldots,f_n)$ is precisely what is denoted by $n((f_1,\ldots, f_n), \bm h)$ by Itenberg-Roy~\cite{IR}. Corollary~\ref{cor:Itenberg roy lower bound} follows from \cite[Theorem 2]{IR}. Based on the inequality \eqref{eq:Itenberg Roy bound} and the idea that the tropically transverse case is the most degenerate and therefore that with the most real solutions, Itenberg and Roy conjectured [\emph{loc.\ cit.}] that there is equality in \eqref{eq:Itenberg Roy bound}. This was later disproven by Li and Wang with an explicit counterexample \cite{LW}. We will revisit that counterexample below in Example~\ref{ex:Li Wang counterexample}.

\subsection{Resultants over hyperfields}

As before, let $f_1,\ldots, f_n\in H[\bm x]$, where $f_i=\sum_{\bm a \in A_i}d_{i, \bm a} \bm x^{\bm a}$, let $h\in (H^*)^n$, and let $\varphi\colon K\to H$ be a morphism from a field $K$ to $H$. We wish to give an upper bound for
\[
N^\varphi_{\bm h}(f_1,\ldots, f_n)
\]
in terms of the multiplicities introduced in the previous section. Recall that $\result{f_1,\ldots, f_n}$ denotes the set of polynomials in $H[\bm y]$ obtained by taking the sparse resultant of the supports of the $f_i$ and the support of $k=1+\sum y_i x_i$, and plugging in the coefficients of the $f_i$ and $k$.

\begin{theorem}
\label{thm:upper bound by resultant}
Let $l=1+\sum_{i=1}^n h_ix_i$. Then with the notation as above we have 
\[
N^\varphi_{\bm h}(f_1,\ldots, f_n)\leq \mult^\varphi_l(\result{f_1,\ldots, f_n}) .
\]
In particular, we have $N^\varphi_{\bm h}(f_1,\ldots, f_n)\leq \mult_l(\result{f_1,\ldots, f_n})$.
\end{theorem}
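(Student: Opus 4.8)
The plan is to reduce the statement to a single‑polynomial assertion about the resultant and then feed it into the multiplicity results already proved. Fix a field $K$ of characteristic $0$ with the morphism $\varphi\colon K\to H$; by definition $N^\varphi_{\bm h}(f_1,\dots,f_n)$ is the supremum, over all lifts $g_i\in K[\bm x]$ with $g_i^\varphi=f_i$ and $\bigcap_{i=1}^nV(g_i)$ finite, of the cardinalities $\bigl|\bigcap_{i=1}^nV(g_i)\cap\varphi^{-1}\{\bm h\}\bigr|$. So it is enough to bound each such cardinality by $\mult^\varphi_l(\result{f_1,\dots,f_n})$ and take the supremum. The engine is a genericity‑free version of Lemma~\ref{lem:factors of resultant}: for \emph{every} lift $g_i$ as above,
\[
\Bigl|\bigcap_{i=1}^nV(g_i)\cap\varphi^{-1}\{\bm h\}\Bigr|\ \le\ \mult^K_{\varphi^{-1}\{l\}}\bigl(\result{g_1,\dots,g_n}\bigr).
\]
Lemma~\ref{lem:factors of resultant} gives equality here, but only for generic $g_i$; since $N^\varphi_{\bm h}$ ranges over \emph{all} lifts, the inequality is both what is available and what is needed.

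To prove this inequality, put $R=\result{g_1,\dots,g_n}=\result{1+\sum y_ix_i,g_1,\dots,g_n}\in K[\bm y]$ and $S=\bigcap_iV(g_i)\cap\varphi^{-1}\{\bm h\}$; if $S=\varnothing$ there is nothing to show, so assume $S\ne\varnothing$. Since $\bm h\in(H^*)^n$, every $\bm p\in S$ lies in the torus $(K^*)^n$, so $k_{\bm p}:=1+\sum p_iy_i$ is a linear prime of $K[\bm y]$ with $k_{\bm p}^\varphi=l$. Whenever $k_{\bm p}(\bm y)=0$, the point $\bm p$ is a common zero in $(\overline K^*)^n$ of $1+\sum y_ix_i$ and of $g_1,\dots,g_n$, so the defining vanishing property of the sparse resultant forces $R(\bm y)=0$; and $R\not\equiv0$ (in particular $R$ is a genuine non‑unit polynomial, not the convention value $1$ assigned when the codimension drops), because a generic hyperplane $1+\sum y_ix_i=0$ misses the finite set $\bigcap_iV(g_i)\cap(\overline K^*)^n\supseteq S$. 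Hence $k_{\bm p}\mid R$ for each $\bm p\in S$; as $\bm p\mapsto k_{\bm p}$ is injective on $S$, we get $\prod_{\bm p\in S}k_{\bm p}\mid R$ in the UFD $K[\bm y]$, and each factor lies in $\varphi^{-1}\{l\}$, so $\mult^K_{\varphi^{-1}\{l\}}(R)\ge|S|$.

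It remains to connect $R$ over $K$ with the set $\result{f_1,\dots,f_n}$ over $H$. By construction $R$ is obtained by substituting the coefficients of $1+\sum y_ix_i$ and of the $g_i$ into the integer resultant polynomial $\result{A_0,\dots,A_n}$, where $A_0=\{0\}\cup\{\bm e_i\}$ and $A_i=\supp(g_i)=\supp(f_i)$; thus each coefficient of $R$ is an integral combination of monomials in the coefficients of the $g_i$, and applying $\varphi$ coefficientwise together with Lemma~\ref{lem:morphism-over-sum} on each coefficient shows that $R^\varphi$ lies in the set obtained from the same substitution with the coefficients of $f_i=g_i^\varphi$, i.e.\ $R^\varphi\in\result{f_1,\dots,f_n}$ and $R\in\varphi^{-1}(\result{f_1,\dots,f_n})$. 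Since $\result{A_0,\dots,A_n}$ has fixed $\bm y$‑degree and $\varphi$ preserves supports, $\varphi^{-1}(\result{f_1,\dots,f_n})$ has bounded degree, so Lemma~\ref{lem:hyperfield multiplicity of a set is a maximum} gives
\[
\mult^K_{\varphi^{-1}\{l\}}(R)\ \le\ \mult^K_{\varphi^{-1}\{l\}}\bigl(\varphi^{-1}(\result{f_1,\dots,f_n})\bigr)\ =\ \mult^\varphi_l\bigl(\result{f_1,\dots,f_n}\bigr),
\]
the last equality being the definition of relative multiplicity. Chaining the displays and taking the supremum over lifts $g_i$ proves the main inequality, and the ``in particular'' clause then follows from Proposition~\ref{prop:comparison relative multiplicity and hyperfield multiplicity}, which gives $\mult^\varphi_l\le\mult^H_l$.

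The main obstacle, I expect, is the genericity‑free divisibility step in the second paragraph: recovering the classical picture that $\result{g_1,\dots,g_n}$ is, up to a monomial and a scalar, the product of the linear forms dual to the common torus roots of the system, \emph{without} assuming the $g_i$ generic. The delicate points are (i) that $\varphi^{-1}\{\bm h\}\subseteq(K^*)^n$ pins the relevant roots into the torus, so that both the resultant's vanishing criterion applies and $k_{\bm p}$ is genuinely a linear prime, and (ii) that $\result{g_1,\dots,g_n}$ fails to be identically zero precisely in the cases where $S\ne\varnothing$, so that the codimension‑one hypothesis underlying the resultant is satisfied. The coefficientwise bookkeeping in the third paragraph — including the sign ambiguity of the integer resultant — is routine once Lemma~\ref{lem:morphism-over-sum} is in hand.
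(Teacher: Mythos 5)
Your proof follows the same overall strategy as the paper: observe that $\result{g_1,\dots,g_n}^\varphi\in\result{f_1,\dots,f_n}$, compare the solution count to a multiplicity of the resultant over $K$, and then invoke Lemma~\ref{lem:hyperfield multiplicity of a set is a maximum} together with Definition~\ref{def:relative multiplicity}. The substantive difference is in how you relate the count $\bigl|\bigcap_{i}V(g_i)\cap\varphi^{-1}\{\bm h\}\bigr|$ to $\mult^K_{\varphi^{-1}\{l\}}(\result{g_1,\dots,g_n})$. The paper cites Lemma~\ref{lem:factors of resultant}, which gives an \emph{equality} but carries a genericity hypothesis on the coefficients of the $g_i$; meanwhile $N^\varphi_{\bm h}$ maximizes over \emph{all} lifts with finite common zero locus, and for a non-generic lift the equality can genuinely fail. (Take $n=1$, $K=\mathbf R$, $\varphi=\sgn$, $g_1=(x-1)^2$, $\bm h=+1$: the resultant is proportional to $(1+y)^2$, so the right side is $2$, while the left side is $1$.) Your argument replaces this with the direct divisibility computation: each common torus zero $\bm p$ with $\varphi(\bm p)=\bm h$ produces a linear form $k_{\bm p}\in\varphi^{-1}\{l\}$ dividing $\result{g_1,\dots,g_n}$, distinct $\bm p$ give distinct $k_{\bm p}$, and UFD factorization yields the inequality $\bigl|\bigcap V(g_i)\cap\varphi^{-1}\{\bm h\}\bigr|\le\mult^K_{\varphi^{-1}\{l\}}(\result{g_1,\dots,g_n})$. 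This works for every lift, which is exactly what the theorem needs, so your version patches the genericity step that the paper leaves implicit.

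One small caveat: your parenthetical justification that $R\not\equiv0$ (a generic hyperplane misses the finite torus intersection) is not quite complete, because the sparse resultant vanishes precisely when there is a common zero in a toric compactification, and it could conceivably vanish identically due to boundary degenerations even though the torus intersection is finite. This does not threaten the argument, though: if $R=0$ then $\mult^K_{\varphi^{-1}\{l\}}(R)=\infty$ directly from Definition~\ref{def:hyperfield multiplicity} (since $(\{0\}:\cL)=\{0\}$), so the inequality is trivial in that degenerate case; you might simply fold this observation in rather than claiming $R\ne0$.
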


\begin{proof}
    Given $g_i\in \varphi^{-1}\{f_i\}$ for $1\leq i\leq n$ with $\bigcap_{i=1}^n V(g_i)$ finite, we have
    \[
    \result{g_1,\ldots, g_n}^\varphi\in \result{f_1,\ldots, f_n} .
    \]
    By Lemma \ref{lem:factors of resultant}, it follows that
    \begin{multline*}
    \left\vert\bigcap_{i=1}^n V(g_i) \cap \varphi^{-1}\{\bm h\} \right\vert= 
    \mult^K_{\varphi^{-1}\{l\}}(\result{g_1,\ldots, g_n})\leq\\
    \leq\mult^K_{\varphi^{-1}\{l\}}(\varphi^{-1}\{\result{f_1,\ldots, f_n}\})
    =\mult^\varphi_l(\result{f_1,\ldots,f_n}) .
    \end{multline*}
\end{proof}

In the remainder of this section, we analyze the utility of Theorem~\ref{thm:upper bound by resultant} in two explicit examples. Our computations rely on the help of the Singular Computer Algebra System \cite{sing}.

\begin{example}
\label{ex:Li Wang counterexample}
Let $a,b,r,s,t$ be positive reals and consider the polynomial system in two variables given by
\[
\begin{cases}
f\coloneqq 1+ax-by = 0\\
g\coloneqq 1+rx^3-sy^3-tx^3y^3=0
\end{cases}
\]
Li and Wang showed that for appropriate choices of $a,b,r,s,t$ the system has $3$ positive real solutions~\cite{LW}. This served as a counterexample to the Itenberg-Roy conjecture that predicted at most $2$ real solutions. We now show that a resultant computation can predict the correct bound. As before, we introduce an auxiliary linear form
\[
l \coloneqq 1+ux+vy
\]
with parameters $u,v$, compute a multiple of the sparse resultant of $l$, $f$, and $g$ and then specialize to the sign hyperfield to obtain a set of signed polynomials in $u$ and $v$. In this set of signed polynomials, some but not all coefficients have a constant sign (up to multiplying everything by $-1$). We use the following Singular code to compute the resultant.

\begin{verbatim}
    system("random", 12341234);
    ring R = (0,(u,v,a,b,r,s,t)),(x,y),dp;
    ideal I = 1+ux+vy, 1+ax-by, 1+rx3-sy3-tx3y3;
    module m = mpresmat(I,0);
    det(m) / b9; // simplify by dividing by b^9
\end{verbatim}

This gives (abbreviating terms with multiple signs)
\begin{multline*}
\textcolor{black!50}{u^{6}(\cdots)}
\textcolor{black!50}{+u^{5}v(\cdots)}
-3u^{5}ab^{3}s
\textcolor{black!50}{+u^{4}v^{2} (\cdots)}
-9u^{4}va^{2}b^{2}s
+3u^{4}a^{2}b^{3}s 
\textcolor{black!50}{+u^{3}v^{3}(\cdots)} \\
\textcolor{black!50}{+u^{3}v^{2} (\cdots)}
\textcolor{black!50}{+u^{3}v(\cdots)}
\textcolor{black!50}{+u^{3}(\cdots)}
\textcolor{black!50}{+u^{2}v^{4}(\cdots)}
\textcolor{black!50}{+u^{2}v^{3}(\cdots)} \\
+u^{2}v^{2}(9a^{4}bs +9ab^{4}r +9abt)
\textcolor{black!50}{+u^{2}v(\cdots)}
+3u^{2}ab^{3}t
\textcolor{black!50}{+uv^{5}(\cdots)}
+9uv^{4}a^{2}b^{2}r \\
\textcolor{black!50}{+uv^{3}(\cdots)}
\textcolor{black!50}{+uv^{2}(\cdots)}
-9uva^{2}b^{2}t
-3ua^{2}b^{3}t
\textcolor{black!50}{+v^{6}(\cdots)}
+3v^{5}a^{3}br
+3v^{4}a^{3}b^{2}r \\
\textcolor{black!50}{+v^{3}(\cdots)}
+3v^{2}a^{3}bt
+3va^{3}b^{2}t
+a^{3}b^{3}t.
\end{multline*}

Specializing to the sign hyperfield, we obtain the set of signed polynomials in $u$ and $v$ represented in Figure~\ref{fig:big resultant}. The maximal boundary multiplicity of the polynomials in this set is $3$, the constaints coming from for the lower boundary.  Since we know that this bound can be achieved by \cite{LW}, the boundary-multiplicity is equal to the multiplicity in this case.
\end{example}

\begin{figure}[htbp]
    \centering
    \begin{tikzpicture}
        \matrix[matrix of math nodes]{
        * &   &   &   &   &   &   \\
        + & * &   &   &   &   &   \\
        + & + & * &   &   &   &   \\
        * & * & * & * &   &   &   \\
        + & * & + & * & * &   &   \\
        + & - & * & * & - & * &   \\
        + & - & + & * & + & - & * \\
        };
    \end{tikzpicture}
    \caption{A multiple of the signed sparse resultant of $f$, $g$ and $l$. A $*$ means the sign is undetermined.}
    \label{fig:big resultant}
\end{figure}

Note that signed resultants are not always the best way to look at certain problems, as the next example shows.

\begin{example}
We compute a multiple of the resultant of $1+ux+vy$, $1+ax+by$ and $1+tx+rx^2-sy^2$ using the following code:
\begin{verbatim}
    system("random", 12341234);
    ring R = (0,(u,v,a,b,r,s,t)),(x,y),dp;
    ideal I = 1+ux+vy, 1+ax+by,1+rx2-sy2+tx; 
    module m = mpresmat(I,0);
    det(m) / b; // simplify by dividing by b
\end{verbatim}
The result is the polynomial in $u$ and $v$ given by
\begin{multline*}
u^2(b^2-s)+uv(-ab+bt)+u(2as-b^2t)+\\
+v^2(a^2-at+r)+v(abt-2br)-a^2s+b^2r .  
\end{multline*}
None of the signs of the coefficients are determined, so our bound is $2$. But clearly $a,b>0$ implies that the system cannot have any positive solutions.
\end{example}

\emergencystretch=3em
\printbibliography
\end{document}